\DeclareMathOperator{\ff}{ff}
\DeclareMathOperator{\supp}{supp}
\DeclareMathOperator{\I}{I}
\DeclareMathOperator{\II}{II}
\DeclareMathOperator{\Sc}{sc}
\DeclareMathOperator{\bd}{b}
\DeclareMathOperator{\D}{d}
\newcommand{\hexp}{\widehat{\exp}}
\newcommand{\R}{\mathbb{R}}
\newcommand{\B}{\mathbb{B}}
\renewcommand{\S}{\mathbb{S}}
\newcommand{\tM}{\widetilde{M}}
\newcommand{\p}{\partial}
\newcommand{\n}{\nabla}
\newcommand{\oeta}{\overline{\eta}}
\newcommand{\on}{\overline{\n}}
\newcommand{\hn}{\widehat{\nabla}}
\newcommand{\oM}{\overline{M}}
\newcommand{\oN}{\overline{N}} 
\newcommand{\ta}{{\widetilde{\alpha}}}
\newcommand{\oX}{\overline{X}}
\newcommand{\x}{{x_\eta}}
\newcommand{\oU}{\overline{U}}
\newcommand{\oA}{\overline{A}}
\newcommand{\hI}{\widehat{I}}
\newcommand{\hA}{\widehat{A}} 
\newcommand{\oI}{\overline{I}} 
\newcommand{\og}{\overline{g}}
\newcommand{\hphi}{\widehat{\phi}}
\newcommand{\pM}{\partial M}
\newcommand{\calF}{\mathcal{F}}
\newcommand{\calG}{\mathcal{G}}
\newcommand{\calT}{\mathcal{T}}
\newcommand{\calU}{\mathcal{U}}
\newcommand{\calV}{\mathcal{V}}
\newcommand{\calW}{\mathcal{W}}
\newcommand{\cU}{\mathcal{U}}
\newcommand{\cA}{\mathcal{A}}
\newcommand{\cAt}{\widetilde{\mathcal{A}}}
\newcommand{\cI}{\mathcal{I}}
\let\epsilon\varepsilon
\let\phi\varphi
\renewcommand{\a}{\alpha}
\renewcommand{\b}{\beta}
\newcommand{\g}{\gamma}
\renewcommand{\d}{\delta}
\newcommand{\e}{\epsilon}
\newcommand{\G}{\Gamma}
\newcommand{\oG}{\overline{\Gamma}}
\newcommand{\hG}{\widehat{\Gamma}} 
\newcommand{\hg}{\widehat{\gamma}} 
\newcommand{\oga}{\overline{\gamma}}  
\renewcommand{\r}{\rho}
\renewcommand{\t}{\tau}
\renewcommand{\k}{\kappa}
\renewcommand{\l}{\lambda}
\newcommand{\s}{\sigma}
\renewcommand{\th}{\theta}
\newcommand{\w}{\omega}
\newcommand{\tU}{\widetilde{U}}
\newcommand{\hO}{\widehat{\Omega}}
\newcommand{\oO}{\overline{\Omega}} 
\newcommand{\ep}{\epsilon}
\newcommand{\ga}{\gamma}
\newcommand{\de}{\delta}
\newcommand{\al}{\alpha}
\newcommand{\be}{\beta}
\newcommand{\gb}{\overline{g}}
\newtheorem{theorem}{Theorem}
\newtheorem{definition}{Definition}
\numberwithin{definition}{section}
\newtheorem{proposition}[definition]{Proposition}
\newtheorem{lemma}[definition]{{Lemma}}
\newtheorem{remark}[definition]{Remark}
\newtheorem{corollary}[definition]{Corollary}
\newtheorem{theoremofothers}[definition]{{Theorem}}
\numberwithin{equation}{section}
\newenvironment{manualproposition}[1]{%
  \manualpropositioninner
}{\endmanualpropositioninner}
\let \o \undefined
\def \o#1{\overline{#1}}
\def\fr#1#2{\frac{#1}{#2}}
\let\td\undefined
\def \td#1{\widetilde{#1}}
\let\implies\Rightarrow
\title[Local X-ray Transform on AH Manifolds]{Local X-ray Transform on
  Asymptotically Hyperbolic Manifolds via Projective Compactification}
\author[Nikolas Eptaminitakis and C. Robin Graham]{Nikolas Eptaminitakis}
\address{Department of Mathematics, Purdue University,
West Lafayette, IN 47907, USA}
\email{neptamin@purdue.edu}
\author[]{C. Robin Graham}
\address{Department of Mathematics, University of Washington,
Box 354350\\
Seattle, WA 98195-4350, USA}
\email{robin@math.washington.edu}
\subjclass[2020]{53C65, 35R30, 53C22}
\begin{document}

\begin{abstract}
We prove local injectivity near a boundary point for the geodesic X-ray
transform for an asymptotically hyperbolic metric even mod $O(\rho^5)$ in dimensions three and higher.      
\end{abstract}

\maketitle
\thispagestyle{empty}

\begin{center}
{\it Dedicated to the memory of Vaughan Jones}
\end{center}

\bigskip

\section{Introduction}
The problem of recovering a function $f$ from its geodesic X-ray transform
\begin{equation} 
If(\g)=\int_\g fds,
\end{equation} where $\g$ is a geodesic of a Riemannian metric $g$ on a
Riemannian manifold and $ds$ denotes integration with respect to $g$-arc
length, has been studied extensively since the early 20th century, starting
with the Radon transform in the 2-dimensional Euclidean space
(\cite{Radon17}).  Aside from its intrinsic geometric interest, this
question arises in numerous applications, including medical, geophysical
and ultrasound imaging; for a comprehensive recent survey see \cite{IlmavirtaMonard+2019+43+114}. A major breakthrough in the study of the geodesic
X-ray transform was the proof by Uhlmann-Vasy 
(\cite{Uhlmann2016}) of local injectivity near a boundary point on manifolds 
of dimension at least 3 with strictly convex boundary.  In this paper we 
prove an analog of the Uhlmann-Vasy result on asymptotically hyperbolic
manifolds.

Let $(\oM^{n+1},\p\oM)$ be a compact manifold with boundary and $M$ be  
its interior.  
A $C^\infty$ metric $g$ on $M$ is called asymptotically hyperbolic (AH) if
for some (and hence any) smooth boundary defining function $\r$ (that is,
$\r\big|_{\p\oM}=0$, $\r>0$ on $M$, $d\r\big|_{\p\oM}\neq 0$) the 
Riemannian metric $\overline{g}:=\r^2 g$ on $M$ extends to a smooth metric
on $\oM$ with the additional property that
$\displaystyle|d\r|^2_{\og}\equiv 1$ on $\p\oM$.  We denote by  
$h=\overline{g}|_{T\p\oM}$ the induced metric on $\p\oM$.   
As shown in \cite{MR2941112}, $(M,g)$ is a complete Riemannian manifold
with sectional curvatures approaching $-1$ as $\r\to 0$. 
The classical example of an AH manifold is the Poincar{\'e} ball model of 
the hyperbolic space of constant sectional curvature $-1$, the manifold being
the Euclidean unit ball $\mathbb{B}^{n+1}=\{(x^1,\dots,x^{n+1})\in
\R^{n+1}:|x|<1\}$ with the metric  
\[
g_H:=4\dfrac{\sum_{j=1}^{n+1}(dx^j)^2} { (1-|x|^2)^2}.
\]
Interest in the study of AH manifolds has risen in the past 20 years since  
the AdS/CFT conjecture, proposed in \cite{MR1633016}, related conformal
field theories with gravity theories on AH spaces.  

Since a boundary
 defining function $\rho$ 
is determined only up to a smooth
positive multiple, $g$ determines a conformal family $\mathfrak{c}$ of 
metrics on the boundary given by $\mathfrak{c}=[h]$.  This conformal  
class of metrics is called the \textit{conformal infinity} of $g$.    
In \cite{MR1112625}, Graham and Lee show that for each conformal
representative $h\in \mathfrak{c}$ there exists a unique
boundary defining function $\r$ inducing a product decomposition
$[0,\e)_\r\times \p\oM$ of a collar neighborhood of the boundary such that the
metric can be written in the form 
\begin{equation}\label{eq:normal_form}
g=\fr{d\r^2+h_\r}{\r^2},
\end{equation}
where $h_\r$ is a one-parameter family of metrics on $\p\oM$, smooth
in $\r$ up to $\r=0$, with $h_0=h$. We say that an 
AH metric is in \textit{normal form} if it is written as in
\eqref{eq:normal_form}.  
Note that \eqref{eq:normal_form} implies that  the equality
$|d\r|^2_{\og}=1$ 
is valid in a neighborhood of $\p\oM$ instead of just on $\p\oM$. 
In this paper we will be concerned with AH metrics that are 
\textit{even mod $O(\rho^N)$}, where $N$ is a positive odd integer.
This means that whenever $g$ is
written in normal form \eqref{eq:normal_form} in a neighborhood of $\p\oM$, 
one has 
\begin{equation}\label{eq:even_to_order_N} 
  (\p_\r)^mh_\r\big|_{\r=0}=0 \text{ for } m \text{ odd, }
1\leq m< N. 
\end{equation}
In the case when \eqref{eq:even_to_order_N} holds for any odd 
$N> 0$ the metric $g$ will be called \textit{even}. 
\noindent As shown in \cite[Lemma 2.1]{MR2153454}, evenness mod 
$O(\rho^N)$ is a well defined property of an AH metric, independent of the
chosen conformal representative determining the normal form 
\eqref{eq:normal_form}.  

A unit-speed geodesic $\g$ for $g$ is said to be trapped if either 
$\liminf_{t\to\infty}\r(\g(t))>0$ or
$\liminf_{t\to-\infty}\r(\g(t))>0$.  If $\ga$ is not  
trapped, then $\lim_{t\to\pm\infty}\ga(t)\in\p\oM$ exists and
$\r(\ga(t))=O(e^{-|t|})$.  (See \cite{MR2941112} or
\cite[Lemma 2.3]{2017arXiv170905053G}.)  In this case we define 
\begin{equation}\label{eq:ray_transform}
If(\ga):=\int_{-\infty}^\infty f(\g(t))\,dt
\end{equation}
for $f$ such that the integral converges.  

Injectivity of the X-ray transform has been studied in various settings
overlapping with AH spaces.  Classical results on 
hyperbolic space viewed as a symmetric space can be found in
\cite{MR2743116}.  
More recently, \cite{2016arXiv161204800L} and 
\cite{2017arXiv170510126L} consider injectivity of the X-ray transform in
the setting of Cartan-Hadamard manifolds, which are by definition complete,
simply connected manifolds of non-positive curvature; the underlying
manifolds are diffeomorphic to $\R^n$.  Injectivity results  
specifically in the setting of AH manifolds can be found in
\cite{2017arXiv170905053G}. 

We will focus on \eqref{eq:ray_transform} restricted to a subset of 
geodesics.  
If $U\subset \oM$ (typically an open neighborhood of a point $p\in \p\oM$,
or its closure), a geodesic is said to be $U$-local if  
$\g(t)\in U$ for all $t\in\R$ and 
$\lim_{t\to \pm \infty}\g(t)\in U\cap \p\oM$.  The set $\Omega_U$ of
$U$-local geodesics is  nonempty if $U$ is any open neighborhood 
of a boundary point; this is a consequence of the existence of ``short'' 
geodesics (see \textsection 2.2 of \cite{2017arXiv170905053G}).

As we will indicate in Section~\ref{sec:proof_in_even_case}, for $U$ a
small neighborhood of a boundary point, the map 
$f\to If|_{\Omega_U}$ can be defined on $\r^{3/2}L^2(U;dv_{\og})$ with
values in an appropriate $L^2$ space (here $dv_{\o{g}}$ denotes the volume
form induced by the smooth metric $\o{g}$ on $\oM$).   

\begin{theorem}\label{thm:main}
Let $\oM$ be a manifold with boundary of dimension at least $3$, with its interior
endowed with an asymptotically hyperbolic metric $g$ that is even mod
$O(\rho^5)$.  
Given any neighborhood $O$ in $\oM$ of $p\in \p\oM$, there exists a 
neighborhood $U\subset O$ in $\oM$ of $p$ such that 
$f\to If\big|_{\Omega_U}$  is injective on $\r^{3/2}L^2(U;dv_{\og})$.     
\end{theorem}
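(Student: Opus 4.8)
The plan is to reduce the statement, via the projective compactification of $(M,g)$ near $p$, to a situation in which the method of Uhlmann--Vasy \cite{Uhlmann2016} applies. In a collar where $g$ is in normal form $g=\rho^{-2}(d\rho^{2}+h_\rho)$, the change of boundary defining function $x=\rho^{2}$ has the effect that the geodesics of $g$, which in the $(\rho,y)$ coordinates meet $\{\rho=0\}$ orthogonally, become in the $(x,y)$ coordinates curves that to leading order at $\{x=0\}$ take the form $x=c-Q(y-y_{0})$ with $Q$ positive definite --- the model case being the straight chords of the Klein--Beltrami picture of $\H^{n+1}$, which are tangent to the level sets $\{x=\mathrm{const}\}$ from the side $\{x\le\mathrm{const}\}$. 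First I would make this precise: using the projective compactification I would show that the unparametrized $g$-geodesics issuing from points near $p$ form a smooth family of curves up to the compactified boundary $\{\rho=0\}$, that the level sets $\{\rho=\mathrm{const}\}$ are strictly convex for this family, and that the associated exponential map and Jacobi fields --- and hence the Schwartz kernel of the normal operator introduced below --- extend with the regularity up to $\{\rho=0\}$ needed for the pseudodifferential analysis. The hypothesis that $g$ be even mod $O(\rho^{5})$ enters precisely here: it is what guarantees that the projectively compactified geometric data are smooth to the order required, the odd-power coefficients of $h_\rho$ in degrees below $5$ being the obstructions that would otherwise spoil this regularity.

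Granting this geometric input, I would run the Uhlmann--Vasy scheme. Choose a function $\tilde x$ on $\oM$ near $p$ with $\tilde x(p)=0$ and $d\tilde x(p)\neq0$ whose level sets $\{\tilde x=-c\}$, for $0<c<c_{0}$, are strictly convex from the side $\{\tilde x>-c\}$ with respect to the compactified geodesic flow, and set $\tilde U_{c}=\{\tilde x>-c\}\cap O$; then every $g$-geodesic tangent to $\{\tilde x=-c\}$ stays in $\{\tilde x\ge-c\}$, and the short geodesics through points near $p$ are $\tilde U_{c}$-local. On $\tilde U_{c}$ the restricted transform $f\mapsto If|_{\Omega_{\tilde U_{c}}}$ is bounded on $\rho^{3/2}L^{2}(\tilde U_{c};dv_{\og})$, as indicated in the introduction following \cite{2017arXiv170905053G}. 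Form the exponentially conjugated weighted normal operator $N_{\mathsf F}=e^{\mathsf F/\mathsf x}\,\rho^{-\beta}\,I^{*}\chi\,I\,\rho^{\beta}\,e^{-\mathsf F/\mathsf x}$, where $\mathsf x=\tilde x+c>0$ defines the artificial boundary $\{\tilde x=-c\}$, $\beta$ is the power adapted to the $\rho^{3/2}L^{2}$ weight, $\chi$ is a cutoff near $p$, and $\mathsf F>0$ is a large constant. The main computation is that, after localization to the collar, $N_{\mathsf F}$ lies in Melrose's scattering pseudodifferential calculus with large parameter $\mathsf F$, and that its principal symbol --- a Gaussian-type fibre integral over the $g$-geodesics tangent to the level sets of $\tilde x$, weighted by the conjugating exponential --- is elliptic, uniformly up to $\{\rho=0\}$, once $\mathsf F$ is large enough; it is at this point that the hypothesis $\dim\oM\ge3$ is used, since it provides enough such geodesics to recover the full cotangent symbol.

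Ellipticity in the scattering calculus then yields a parametrix and a stability estimate of the form $\|f\|_{\rho^{3/2}L^{2}(\tilde U_{c};dv_{\og})}\lesssim\|If|_{\Omega_{\tilde U_{c}}}\|$, valid for all sufficiently small $c>0$ after shrinking the collar to absorb the parametrix error. A layer-stripping argument promotes this to injectivity on a fixed neighborhood: taking $U$ to be the union of a foliation of a neighborhood of $p$ by such strictly convex layers, if $f\in\rho^{3/2}L^{2}(U;dv_{\og})$ and $If|_{\Omega_{U}}=0$, then the estimate applied to each layer forces $f$ to vanish on it, and hence $f=0$ on $U$. I expect the principal obstacle to be the first step --- proving that evenness mod $O(\rho^{5})$ suffices to make the projectively compactified geometry (the compactified spray and connection, the exponential map, and the resulting normal-operator kernel) regular enough up to $\{\rho=0\}$ for the scattering-calculus argument to close; once this geometric input is secured, the symbol computation and the functional-analytic conclusion follow the now-standard template of \cite{Uhlmann2016} and its refinements.
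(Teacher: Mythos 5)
Your reduction to the projectively compactified picture (introducing $x=\rho^2$, identifying the $\rho^{3/2}L^2$ weight, and aiming to apply the Uhlmann--Vasy machinery to the compactified geodesic flow) matches the paper's starting point, and your instinct about where the difficulty lies is exactly right. But the step you defer --- ``proving that evenness mod $O(\rho^5)$ suffices to make the projectively compactified geometry \ldots regular enough up to $\{\rho=0\}$ for the scattering-calculus argument to close'' --- is precisely the step that \emph{fails}, and the paper's proof is organized around avoiding it rather than establishing it. Under evenness mod $O(\rho^5)$ the compactified connection $\hn$ has Christoffel symbols of the form $\oG_{ij}^k + r^{3/2}B_{ij}^k$: it is $C^1$ but not $C^2$ up to the boundary, the exponential map gains only one extra derivative (the paper's Lemma~\ref{lm:regularity} gives $C^2$), and consequently the Schwartz kernel of the conjugated normal operator is not smooth on the scattering double space. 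It is therefore not a scattering pseudodifferential operator, and the parametrix construction and elliptic estimate you invoke (``Ellipticity in the scattering calculus then yields a parametrix and a stability estimate'') have no foundation: symbolic iteration requires full symbol expansions, not a $C^1$ kernel. This is not a technicality one can absorb by ``shrinking the collar''; it is the reason the hypothesis in the theorem is $N=5$ rather than full evenness.

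What the paper actually does is a perturbation argument that you would need to supply in place of your second and third paragraphs. One fixes a genuinely \emph{smooth} reference connection $\on$ with $\hn=\on+r^{3/2}B$, applies the Uhlmann--Vasy stability estimate \eqref{eq:stability_o_nabla} to the scattering $\Psi$DO $\oA_{\chi,\eta,\sigma}$ built from $\on$, and then shows (Proposition~\ref{prop:mapping_diff}) that the operator norm of the difference $\oA_{\chi,\eta,\sigma}-\hA_{\chi,\eta,\sigma}:L^2(U_\eta)\to H^{1,0}_{\Sc}(O_\eta)$ tends to $0$ as the artificial boundary parameter $\eta\to 0$; this is proved not by symbol calculus but by a Schur-type $L^1$ kernel bound after lifting both kernels to a blown-up double space and checking that their leading behavior at the relevant front face is connection-independent. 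Absorbing the perturbation into the stability estimate for $\oA$ then gives injectivity of $\hA_{\chi,\eta,\sigma}$, hence of $\hI$, for small $\eta$ --- directly on $U_\eta$, with no layer stripping needed (layer stripping enters only for global results under a convex foliation hypothesis). As written, your proposal is essentially the paper's proof for \emph{even} metrics, where $\hn$ is smooth and Theorem~\ref{UV1} applies verbatim; for the $O(\rho^5)$ case the missing perturbation step is the substance of the theorem.
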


\noindent
We expect that local injectivity holds for general asymptotically
hyperbolic metrics, but that other techniques will be needed to prove
this. 
Likewise, we expect that the hypothesis $f \in \rho^{3/2} L^2(U;dv_{\o{g}})$ can be weakened.

Our approach is motivated by the following observation.  Recall that 
the Klein model for hyperbolic space is another metric on $\B^{n+1}$
obtained from the Poincar\'e metric by a change of the radial variable.  
Geodesics for the Klein model are straight line segments in $\R^{n+1}$
under suitable parametrizations.  So the hyperbolic X-ray transform can
be identified with the Euclidean X-ray transform applied to a function 
supported in the unit ball, modulo changing the parameter of integration on
each geodesic. 
This observation has been utilized in the study of the hyperbolic Radon
transform, see e.g. \cite{MR1242891}.  There is an analogous relation for
even AH metrics.  An  
even AH metric induces what we call an {\it even structure} on
$(\oM,\p\oM)$ subordinate to its smooth structure. This is a subatlas of
the atlas 
defining the smooth structure, with the property that all the transition  
maps for the even structure are even diffeomorphisms.  
One can use the even
structure to define a new smooth structure $(\oM_e,\p\oM_e)$ on the
topological manifold with boundary underlying $(\oM,\p\oM)$ by introducing
$r=\r^2$ as a new boundary defining function.  As outlined at the end of
\textsection 4 of 
\cite{MR2858236}, when viewed relative to the smooth 
structure $(\oM_e,\p\oM_e)$, the metric $g$ is projectively 
compact in the sense that its Levi-Civita connection is projectively
equivalent to a connection $\hn$ smooth up to the boundary, i.e. its
geodesics agree up to parametrization with the geodesics of $\hn$.  The
connection 
$\hn$ need not be the Levi-Civita connection of a metric as happens 
on hyperbolic space, but the Uhlmann-Vasy local injectivity result applies
also to the X-ray transform for smooth connections, so local injectivity
for even AH metrics follows just by quoting \cite{Uhlmann2016}.

The introduction of $r=\rho^2$ as a new defining function to pass from $\oM$ to $\oM_e$ is a key step in Vasy's approach to microlocal analysis on even AH manifolds; see \cite{MR3117526,MR3135765, MR3581325}.

If the AH metric $g$ is not even, one can still introduce an even structure
and a corresponding $(\oM_e,\p\oM_e)$ by introducing $r=\r^2$ as a new
boundary defining function.  But in this case the connection $\hn$ is no
longer smooth up to the 
boundary: its Christoffel symbols have expansions in $\sqrt{r}$.  If 
$\p_\r h_\r\big|_{\r=0}\neq 0$ in \eqref{eq:normal_form}, 
then the Christoffel symbols have $r^{-1/2}$
terms so $\hn$ is not even continuous up to the boundary.  If 
$\p_\r h_\r\big|_{\r=0}= 0$ but $(\p_\r)^3h_\r\big|_{\r=0}\neq 0$, then
$\hn$ has $\sqrt{r}$ terms so it is continuous but not Lipschitz.  Our 
assumption that $g$ is even modulo $O(\r^5)$ guarantees that $\hn$ is at
least a $C^1$ connection.  

In principle one could try to extend directly the proof in  
\cite{Uhlmann2016} to the case of a $C^1$ connection like $\hn$.  But the
microlocal methods do not seem very well suited to such an analysis.
Instead we argue by perturbation:  $\hn$ is a perturbation of a smooth
connection, and the perturbation gets smaller the closer one gets to the
boundary.  For the quantitative control needed to carry this out, we 
need to use not only the local injectivity result of \cite{Uhlmann2016},
but also the associated stability estimate.  We briefly indicate how this 
goes, beginning by describing this stability estimate.

Let $\on$ be a smooth connection on a manifold with strictly convex
boundary $(\oM_e,\p \oM_e)$, of dimension at least 3, let $r$ be a boundary
defining function, and $\tM$ a closed manifold of the same dimension containing $\oM_e$.   
The authors of \cite{Uhlmann2016} constructed a one-parameter family of  ``artificial
boundaries'' near a point $p\in \p\oM_e$ given by $\hat{x}=-\eta$, where 
$\hat{x}\in C^\infty(\tM)$ satisfies $\hat{x}(p)=0$ and $d\hat{x}(p)=-dr(p)$, and $\eta>0$,
and showed injectivity of the 
X-ray transform $\oI$ of $\on$ restricted to geodesics in $\oM_e$ entirely
contained in $U_\eta:=\{\hat{x}\geq-\eta\}\cap\{r\geq 0\}$, for $\eta$
sufficiently small  
(see Figure \ref{figurename_artificial_boundary}).  
\begin{figure}[ht]
\begin{center}
	\includegraphics[scale=.4]{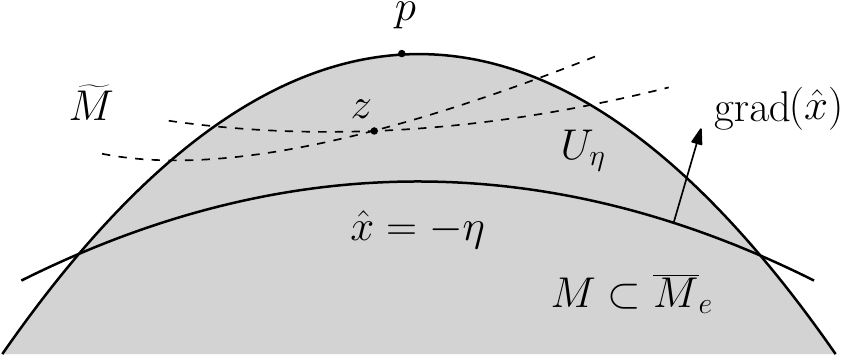}
	\caption{The artificial boundary.}
	\label{figurename_artificial_boundary}
	
\end{center}
	
\end{figure}
The proof is based on the construction of a   
family of ``microlocalized normal operators'' $\oA_{\chi,\eta,\s}$ each one of which is,
roughly speaking, the   
conjugate by exponential weights of the average of $\oI f$ over the 
set of such geodesics passing through a given point.  Here $\s$ is the parameter in the exponential 
weight and $\chi$ is a cutoff function.  
They showed that for appropriately chosen $\chi$, the operator
$\oA_{\chi,\eta,\s}$ is an elliptic 
pseudodifferential operator in Melrose's scattering calculus 
which for sufficiently small $\eta$ has trivial kernel
when acting on functions supported in $U_\eta$, and derived the stability
estimate 
\begin{equation}\label{stability}
\|f\|_{L^2(U_\eta)}\leq
C\|\oA_{\chi,\eta,\s}f\|_{H_{\Sc}^{1,0}(O_\eta)},
\end{equation}
where $H_{\Sc}^{1,0}$ denotes a scattering Sobolev space and $O_\eta$ is a
neighborhood of $U_\eta $ in $\oX_\eta:=\{\hat{x}\geq-\eta\}$.

If $g$ is an AH metric even mod $O(\r^N)$, its
Levi-Civita connection is projectively equivalent as described above to a
connection $\hn$ of the  
form $\hn=\on+ r^{N/2-1}B$ on $\oM_{e}$, where $\on$ and $B$ are smooth.   
If $N\geq 5$, then $\hn$ is $C^1$, so the constructions of its X-ray
transform $\hI$ and the operator $\hA_{\chi,\eta,\s}$ can be carried out
just as for the smooth connection $\on$.  We show that the norm of the
perturbation operator
\begin{equation}\label{po}
\hA_{\chi,\eta,\s}-\oA_{\chi,\eta,\s}: 
L^2(U_\eta)\to H_{\Sc}^{1,0}(O_\eta) 
\end{equation}
goes to zero as $\eta \to 0$.  
This gives an estimate of the form
\eqref{stability} for $\hA_{\chi,\eta,\s}$ for $\eta $ sufficiently
small, which implies local injectivity since $\hA_{\chi,\eta,\s}$ factors
through the X-ray transform $\hI$.  
The perturbation operator is estimated as in the classical Schur criterion  
bounding an $L^2$ operator norm by the sup of the $L^1$ norms of the
Schwartz kernel in each variable separately.  We lift the kernels of the
operators $\hA_{\chi,\eta,\sigma}$ and $\oA_{\chi,\eta,\sigma}$ to a
blown up space which is a refinement of Melrose's double stretched space (see
\cite{MR1291640}), where their singularities are more easily analyzed. 
Due to the fact that the connection $\hn$ is only of class $C^1$, some
rather technical  analysis is required near each boundary face and corner
of the blow-up to conclude that the kernel of $\hA_{\chi,\eta,\s}$ is
sufficiently regular that the norm of the perturbation operator vanishes in
the limit as $\eta\to 0$.

As in \cite{Uhlmann2016}, the method of proof naturally 
yields reconstruction via a Neumann series and a stability estimate for 
$\hI$ acting between Sobolev spaces on $\oM_e$ and the sphere bundle
$S^0\oM_e$ of a smooth metric $g^0$ on $\oM_e$, which we use to parametrize   
geodesics for $\hn$.  
One could pull back such an estimate to obtain one
for $I$ acting between Sobolev spaces on $M$ and the sphere bundle of $g$,
but we will not pursue this here.      
Moreover, one could obtain a global injectivity result in the same way as
in \cite{Uhlmann2016} provided the compact manifold with boundary $\{\r\geq
\e\}$ admits a strictly convex foliation, for all $\e$ sufficiently small. 
We mention that Vasy recently used semiclassical analysis to provide a
simplified, compared to \cite{Uhlmann2016}, proof of injectivity of the
global and local X-ray transform on compact manifolds with boundary
admitting a convex foliation (\cite{vasy2020semiclassical}). 
Global injectivity is shown there without the need for localization and layer stripping.
In the present work, working with the local transform is essential for the
aforementioned perturbation argument showing Theorem \ref{thm:main} for AH
metrics which are even mod $O(\r^5)$, and we follow the original
formulation of \cite{Uhlmann2016}.

In Section~\ref{sec:even_structure_definition} we define even structures on
a manifold with boundary and construct the new manifold with boundary
$(\oM_e,\p\oM_e)$ obtained by introducing $r=\r^2$ as a new defining
function.  We show that via this construction, even asymptotically 
hyperbolic metrics are the same as projectively compact metrics, only 
viewed relative to different smooth structures near infinity.  In 
Section~\ref{sec:proof_in_even_case} we use this observation to relate the 
X-ray transforms for $g$ and $\hn$, and then deduce Theorem~\ref{thm:main}  
for even AH metrics.  Section~\ref{nonsmooth} begins the analysis for the 
$C^1$ connection $\hn$ arising from an AH metric even mod $O(\r^5)$.  We
decompose $\hn$ into a smooth projectively compact connection $\on$ 
and a nonsmooth error term and extend both to the larger manifold $\tM$ in
such a way that they agree outside of $M_e$. 
We also prove Lemma~\ref{lm:regularity}, which  
states that the exponential map for $\hn$ has one more degree of regularity
than expected.  In Section~\ref{estimates} we review scattering Sobolev 
spaces on a manifold with boundary, the construction of the 
microlocal normal operator $A_{\chi,\eta,\s}$, and the stability estimate
\eqref{stability}.
We also  show how Theorem~\ref{thm:main} follows from
Proposition~\ref{prop:mapping_diff}, which is the assertion that the norm
of the perturbation operator \eqref{po} goes to zero as $\eta\to 0$.  In
Section~\ref{kernelanalysis} we describe the blown-up double space, 
analyze in detail the lift of the kernel of $A_{\chi,\eta,\s}$ to this
space, and conclude with the proof of Proposition~\ref{prop:mapping_diff}.

Throughout this paper and unless otherwise stated, given an
$n+1$-dimensional manifold with boundary (such as $(\oM,\p \oM)$ or
$(\oM_e,\p \oM_e)$), lower case Latin indices $i$, $j$, $k$ label  
objects on the manifold and run between $0$ and $n$ in coordinates. 
Lower case Greek indices $\al$, $\be$, $\ga$ label objects on 
the boundary and run between $1$ and $n$ in coordinates.  So a Latin
index corresponds to a pair $i\leftrightarrow (0,\al)$.

\section{\texorpdfstring{Even Asymptotically Hyperbolic $=$ Projectively
    Compact}{{Even Asymptotically Hyperbolic equals Projectively Compact}}}  
\label{sec:even_structure_definition}

This paper is based on an equivalence between even asymptotically
hyperbolic metrics and projectively compact metrics, briefly outlined at
the end of Section 4 of \cite{MR2858236}.  Since it is central to the
paper, we 
describe this equivalence in more detail.  We begin by recalling the 
notions of projective equivalence and projectively compact metrics.  A
reference for projective equivalence is 
\cite[\textsection 5.24]{poor1981differential}.   

Two torsion-free connections $\n$ and $\hn$ on a smooth manifold are
said to be 
projectively equivalent if they have the same geodesics up to
parametrization.  This is equivalent to the condition that their difference
tensor $\hn-\n$ is of the form  
$v_{\!(i}^{\phantom{k}} \delta_{j)}^k=\frac12 (v_i \delta_j^k+v_j \delta_i^k)$ for some
1-form $v$.  If $\ga(t)$ is a geodesic for $\n$, then $\ga(t(\tau))$ is a
geodesic for $\hn$, where $t(\t)$ solves the differential equation
$t''=\mu(t)(t')^2$ with $\mu(t)=-v_{\ga(t)}({\ga}'(t))$.  If $v=du$
happens to be exact, then this equation for the parametrization reduces to
the first order equation 
\begin{equation}\label{parameterchange}
t'=ce^{-u(\ga(t))} 
\end{equation}
which can be integrated by separation of variables.

Let ${}^eg$ be a metric on the interior of a manifold with boundary
$(\oM_e,\p\oM_e)$.  (The explanation for the super/subscript $e$ will be  
apparent shortly.  For now this is just an inconsequential notation.)     
We say that ${}^eg$ is projectively compact if near $\p \oM_e$ it has the
form 
\[
{}^eg=\frac{dr^2}{4r^2}+\frac{k}{r},
\]
where $r$ is a defining function for $\p \oM_e$ and $k$ is a smooth 
symmetric 2-tensor on $\oM_e$ which is positive definite when restricted to 
$T\p \oM_e$.  (The papers \cite{capgovercrelles}, \cite{capgovermathann}
consider more general notions of projective compactness; our   
projectively compact metrics are projectively compact of order $2$ in the
terminology introduced there.)  It is easily checked that this class of
metrics is  
independent of the choice of defining function $r$.  Elementary
calculations (see \eqref{chat} below) show that if ${}^e\n$ is the
Levi-Civita connection of such a  
metric and $r$ a defining function, then the connection $\hn$ defined by 
\begin{equation}\label{smoothconnection}
\hn={}^e\n+D,\qquad D_{ij}^k=v_{(i}^{\phantom{k}} \delta_{j)}^k,\qquad v=dr/r 
\end{equation}
extends smoothly up to $\p \oM_e$.
Thus ${}^e\n$ is projectively equivalent to the smooth connection 
$\hn$.  It turns out that projectively compact metrics are the same as 
even asymptotically hyperbolic metrics upon changing the smooth structure
at the boundary.  We digress to formulate the notion of an even structure
on a manifold with boundary, which underlies this equivalence.

Set $\overline{\R^{n+1}_+}=\{(\r,s):\r\geq 0, s\in \R^n\}$.  View  
$\R^n\subset \overline{\R^{n+1}_+}$ as the subset $\r=0$.  
\begin{definition}
Let $\cU\subset \overline{\R^{n+1}_+}$ be open.  Let $f:\cU\rightarrow \R$
be smooth.  $f$ is said to be even (resp. odd) if either:
\begin{enumerate}
\item
$\cU\cap \R^n= \emptyset$, or
\item
$\cU\cap \R^n\neq \emptyset$ and 
the Taylor expansion of $f$ at each point of $\cU\cap \R^n$ has only even
(resp. odd) terms in $\r$. 
\end{enumerate}
\end{definition}

\noindent
It is equivalent to say that $f$ is even (resp. odd) if there is a smooth
function $u$ so that $f(\r,s)=u(\r^2,s)$ (resp. $f(\r,s)=\r\, u(\r^2,s)$).   
A smooth map $\varphi: \cU\rightarrow \overline{\R^{n+1}_+}$ is said to   
be even if it is of the form $\varphi(\r,s) = (\r',s')$, where $\r'$ is odd
and each component of $s'$ is even.     

\begin{definition}
Let $(\oM,\partial \oM)$ be a manifold with boundary, with atlas   
$\{(\cU_\al,\varphi_\al)\}_{\al\in \cA}$.  Let  
$\{(\cU_\al,\varphi_\al)\}_{\al\in\cAt}$ be a subatlas of  
$\{(\cU_\al,\varphi_\al)\}_{\al\in \cA}$ corresponding to a subset
$\cAt\subset \cA$.  We say that  
$\{(\cU_\al,\varphi_\al)\}_{\al\in\cAt}$ defines an even 
structure on $(\oM,\partial \oM)$ subordinate to its smooth structure if 
the transition map 
\[
\varphi_{\al_2}\circ\varphi_{\al_1}^{-1}
:\varphi_{\al_1}(\cU_{\al_1}\cap\cU_{\al_2})
\rightarrow \varphi_{\al_2}(\cU_{\al_1}\cap\cU_{\al_2})
\] 
is even for all $\al_1,\al_2\in \cAt$.  The even structure is defined to be
the maximal atlas containing $\{(\cU_\al,\varphi_\al)\}_{\al\in\cAt}$ for
which all transition maps are even.     
\end{definition}

\begin{remark}
Since $\{(\cU_\al,\varphi_\al)\}_{\al\in\cAt}$ is in particular an atlas
for the smooth structure determined by
$\{(\cU_\al,\varphi_\al)\}_{\al\in\cA}$, the even structure determines the
smooth structure with respect to which it is subordinate.  So there is
really no need to begin with the original smooth structure.  Nevertheless, 
we will usually have the smooth structure to start with and this language
is appropriately suggestive.  There are many different even  
structures subordinate to a given smooth structure.   
\end{remark}

A diffeomorphism for some $\ep>0$ between a collar neighborhood of $\p \oM$
in $\oM$ and $[0,\ep)\times \p \oM$ induces an even structure on 
$(\oM,\p \oM)$.  In fact, an atlas for $\p\oM$ induces an atlas for  
$[0,\ep)\times \p \oM$ whose transition maps are 
the identity in the $\rho$ factor and independent of $\rho$ in the $\p \oM$
factor.    

If $(\oM,\partial \oM)$ is a manifold with boundary with subordinate even  
structure, it is invariantly defined to say that a function $f$ on $\oM$ is 
even:  $f\circ \varphi_\al^{-1}$ is required to be even on 
$\overline{\R^{n+1}_+}$ for
all charts $(\cU_\al,\varphi_\al)$ in the even structure.  Likewise for odd
functions.  Conversely, knowledge of the even and odd functions on
$(\oM,\partial \oM)$ 
 determines the subordinate even structure.       

As an aside, we comment that if $(\oM,\p\oM)$ is a manifold with boundary,
there is a natural one-to-one correspondence between smooth doubles of
$(\oM,\p\oM)$ and subordinate even structures.  Recall that a smooth double
of $(\oM,\p\oM)$ is a choice of smooth manifold structure on the
topological double $2\oM = (\oM\sqcup \oM)/\p\oM$ such that the inclusions 
$\oM\to 2\oM$ are diffeomorphisms onto their range and such that the
natural reflection $2\oM\to 2\oM$ is a diffeomorphism.  The even
(resp. odd) functions on $(\oM,\partial \oM)$ are determined by the double
by the requirement that their reflection-invariant (resp. anti-invariant)
extension to $2\oM$ is smooth.

Denote by $S:\overline{\R^{n+1}_+}\rightarrow \overline{\R^{n+1}_+}$ the
squaring map  
\[
S(\r,s)=(\r^2,s).
\]
Let $(\oM,\partial \oM)$ be a manifold with boundary and let  
$\{(\cU_\al,\varphi_\al)\}_{\al\in\cAt}$ define an even structure on 
$(\oM,\partial \oM)$ subordinate to its smooth structure.  We construct 
another manifold with boundary $(\oM_e,\partial \oM_e)$ as follows.  Set  
$\oM_e=\oM$ as topological spaces.  Define
\[
\psi_\al =S\circ \varphi_\al,\qquad \al\in\cAt.
\]
If $\al_1$, $\al_2\in \cAt$, then 
\reversemarginpar
\begin{equation}\label{eq:even_diffeo}
	(\varphi_{\al_2}\circ\varphi_{\al_1}^{-1})(\r,s)=(\r\, a(\r,s ),s'(\r,s)), 
\end{equation}
where $a$ and the components of $s'$ are even.  
Now 
$\psi_{\al_2}\circ\psi_{\al_1}^{-1}=S\circ
(\varphi_{\al_2}\circ\varphi_{\al_1}^{-1})\circ S^{-1}$.   
Hence 
\[
\begin{split}
(\psi_{\al_2}\circ\psi_{\al_1}^{-1})(r,s)
=&\big(S\circ (\varphi_{\al_2}\circ\varphi_{\al_1}^{-1})\big)(\sqrt{r},s)\\
=&S\big(\sqrt{r}\,a(\sqrt{r},s),s'(\sqrt{r},s)\big)\\
=&\big(r a(\sqrt{r},s)^2,s'(\sqrt{r},s)\big).
\end{split}
\]
Since $a$ and the components of $s'$ are even, it follows that  
$\psi_{\al_2}\circ\psi_{\al_1}^{-1}$ is smooth.  Hence the charts 
$\{(\cU_\al,\psi_\al)\}_{\al\in\cAt}$ define a manifold with boundary
structure on the topological space $\oM$, which we denote  
$(\oM_e,\partial \oM_e)$.  As topological spaces we have $\oM=\oM_e$.  On
the interior, the identity $\cI:M\rightarrow M_e$ is a  
diffeomorphism.  Since $\psi_\al\circ\varphi_\al^{-1}=S$ is smooth, it
follows that $\cI:\oM\rightarrow \oM_e$ is smooth.  
But $\cI^{-1}:\oM_e\rightarrow \oM$ is  
not smooth since in the charts $\psi_\al$, $\varphi_\al$, its first 
component is the function $\sqrt{r}$ on $\overline{\R^{n+1}_+}$.  The
process of passing from $(\oM,\partial \oM)$ with its subordinate even
structure to  
$(\oM_e,\partial \oM_e)$ could be called ``introducing $r=\r^2$ as a new boundary defining function''.        

Next consider the inverse process of ``introducing $\r=\sqrt{r}$ as a new  
boundary defining function''.  Let $(\oN,\partial \oN)$ be any manifold with boundary.  We
construct another manifold with boundary $(\oM,\partial \oM)$ with  
subordinate even structure, such that
$(\oN,\partial \oN)$ equals $(\oM_e,\partial \oM_e)$ as manifolds with
boundary.  
To do so, let 
$\{(U_\al,\psi_\al)\}_{\al\in \cA}$ be an atlas for $(\oN,\partial \oN)$.
Take $\oM=\oN$ as topological spaces.  Use as charts on $\oM$ the maps 
$\varphi_\al=S^{-1}\circ \psi_\al$.  
Now 
\[
(\psi_{\al_2}\circ\psi_{\al_1}^{-1})(r,s)=(r b(r,s),s'(r,s)) 
\]
where $b$ and $s'$ are smooth.  Calculating the compositions as above gives   
\[
(\varphi_{\al_2}\circ\varphi_{\al_1}^{-1})(\r,s)=\left(\r\sqrt{b(\r^2,s)},s'(\r^2,s)\right).   
\]
Since $b(0,s)\neq 0$, this is an even diffeomorphism.  The atlas 
$\{(U_\al,\varphi_\al)\}_{\al\in \cA}$ thus defines the desired 
manifold with boundary $(\oM,\partial \oM)$ with subordinate even structure.
In this case the subatlas $\cAt$ equals $\cA$.

Suppose now that $g$ is an AH metric on the interior $M$ of a compact
manifold with boundary $(\oM,\p\oM)$ with a subordinate even
structure.  In the context of this discussion it is natural to define $g$
to be even relative to the chosen even structure if in
coordinates $(\r,s)$ in the even structure it has the form
\begin{equation}\label{evenAH}
g=\r^{-2}\big(\gb_{00}d\r ^2+2 \gb_{0\a}d\r ds^\a +\gb_{\a\b}ds^\a ds^\b\big)
\end{equation}
with $\gb_{00}$, $\gb_{\a\b}$ even and $\gb_{0\a}$ odd.  The choice of  
a representative $h$ for the conformal infinity induces a diffeomorphism  
between $[0,\ep)\times \p\oM$ and a collar neighborhood of $\p\oM$ with 
respect to which $g$ has the form \eqref{eq:normal_form} with $h_0=h$.   
By analyzing the construction of the normal
form in \cite{MR1112625}, 
it is not hard to see that this diffeomorphism putting $g$ into normal 
form is even relative to the coordinates $(\r,s)$ and the even structure
determined by the product $[0,\ep)\times \p\oM$
(see the proof of \cite[Lemma 2.1]{MR2153454} 
for the special case when \eqref{evenAH} is already in normal form relative
to another representative).  It follows that $g$ is 
even as defined in the introduction and that $g$ uniquely determines the
even structure with respect to which it is even.  
In the other direction, an even AH metric in the sense of the introduction
is clearly even with respect to the even structure determined by any of its
normal forms.  Thus an AH metric $g$ is even in the sense of the introduction
if and only if it is even relative to some even structure subordinate to
the smooth structure on $(\oM,\p\oM)$, and this even structure is uniquely
determined by $g$.

If $g$ is an even AH metric, we can consider the smooth manifold with
boundary $(\oM_e,\p\oM_e)$ obtained from the even structure determined by
$g$ upon introducing $r=\rho^2$ as a new boundary defining function.
Since $\cI^{-1}:M_e\rightarrow M$ is a diffeomorphism,  
${}^eg:=(\cI^{-1})^*g$ is a metric on $M_e$.  We claim that ${}^eg$ is
projectively compact relative to the smooth structure on 
$(\oM_e,\partial \oM_e)$.  
In fact, if $g$ has the form \eqref{eq:normal_form} on  
$[0,\ep)\times \p\oM$ with $h_\rho$ even in $\rho$, then 
\begin{equation}\label{g_e normalform} 
{}^eg=\frac{dr^2}{4r^2} + \frac{k_r}{r}, 
\end{equation}
where $k_r=h_{\sqrt{r}}$ is a one-parameter family of metrics on $\p\oM_e$
which is smooth in $r$.  Thus ${}^eg$ is projectively compact.  
Conversely, a projectively compact metric relative to 
$(\oM_e,\partial \oM_e)$ is an even AH metric when viewed relative to
$(\oM,\partial \oM)$.  

In summary, the class of even asymptotically hyperbolic metrics on the
interior of a manifold with boundary $(\oM,\p\oM)$ with subordinate even 
structure is exactly the same as the class of projectively compact metrics
in the interior of $(\oM_e,\p\oM_e)$.  The distinction is     
just a matter of which smooth structure one chooses to use at infinity.
The smooth structures are related by introducing $r=\rho^2$ as a new  
defining function.

\section{Local Injectivity for Even Metrics}\label{sec:proof_in_even_case} 

Let $(\oM,\p\oM)$ be a manifold with boundary and $g$ an even AH metric on
$M$.  As described in Section~\ref{sec:even_structure_definition}, the
associated metric ${}^eg$ obtained by introducing $r=\r^2$ as a new
defining function is projectively compact.  In particular, for any defining function
$r$ for  $\p\oM_e$, the connection $\hn$ defined by 
\eqref{smoothconnection} is smooth up to $\p\oM_e$.  We will reduce the
analysis of the local X-ray transform of $g$ to that for $\hn$.  

\begin{lemma}\label{lm:cnvx_bdry}
$\p\oM_e$ is strictly convex with respect to $\hn$. 
\end{lemma}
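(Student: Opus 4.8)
The plan is to establish strict convexity of $\p\oM_e$ by computing the $\hn$-Hessian of a boundary defining function along the boundary and checking that it is negative definite on directions tangent to $\p\oM_e$. Recall that, for a torsion-free connection $\hn$, the boundary of $\oM_e=\{r\geq 0\}$ is strictly convex when for every $q\in\p\oM_e$ and every nonzero $X\in T_q\p\oM_e$ the $\hn$-geodesic $\g$ with $\g(0)=q$, $\dot\g(0)=X$ satisfies $r(\g(t))<0$ for small $t\neq 0$; since $\tfrac{d^2}{dt^2}\big|_{t=0}r(\g(t))=(\hn\,dr)(X,X)$, this is equivalent to $\hn\,dr$ being negative definite on $T\p\oM_e$. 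This condition is independent of the choice of defining function: replacing $r$ by $e^\phi r$ multiplies $(\hn\,dr)(X,X)$ by the positive factor $e^\phi$ for $X$ tangent to $\{r=0\}$, because $dr$ and $d\phi$ annihilate such $X$ there. It is also unchanged if $\hn$ is replaced by a projectively equivalent connection, so it is in particular immaterial which defining function is used in \eqref{smoothconnection} to build $\hn$.

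To carry out the computation I would work in the collar furnished by \eqref{g_e normalform}, with coordinates $(r,s^\al)$ in which ${}^eg=\tfrac{dr^2}{4r^2}+\tfrac1r (k_r)_{\al\b}\,ds^\al ds^\b$ and $k_0=h$. Writing $r=x^0$, one has $(\hn\,dr)_{ij}=-\hG^0_{ij}$, and since $v=dr/r$ has vanishing tangential components the tangential part of $D$ vanishes, so $(\hn\,dr)_{\al\b}=-\hG^0_{\al\b}=-{}^e\G^0_{\al\b}$. A short Christoffel-symbol computation for ${}^eg$ (using ${}^eg^{00}=4r^2$ and ${}^eg^{0\al}=0$) gives ${}^e\G^0_{\al\b}=2(k_r)_{\al\b}-2r\,\p_r(k_r)_{\al\b}$, hence ${}^e\G^0_{\al\b}\big|_{r=0}=2h_{\al\b}$. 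Therefore $(\hn\,dr)_{\al\b}\big|_{r=0}=-2h_{\al\b}$, which is negative definite because $h$ is a Riemannian metric; thus $(\hn\,dr)(X,X)<0$ for every nonzero $X$ tangent to $\p\oM_e$, and $\p\oM_e$ is strictly convex for $\hn$.

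I expect the only real obstacle to be bookkeeping rather than substance: fixing the sign convention in the definition of strict convexity and verifying its independence of the chosen defining function (and of the one used in \eqref{smoothconnection}). Geometrically the statement is the infinitesimal form of the familiar fact that the ``short'' geodesics of an AH metric near the conformal boundary leave and re-enter $\p\oM$, since by construction the geodesics of $\hn$ are, up to parametrization, exactly the geodesics of the even AH metric $g$ carried over to $M_e$ via $\cI$; but the direct Hessian computation above is the most efficient route and requires no further input.
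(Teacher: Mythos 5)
Your proof is correct and follows essentially the same route as the paper: both reduce strict convexity to the sign of $-\hG^0_{\al\b}\big|_{r=0}=-{}^e\G^0_{\al\b}\big|_{r=0}=-2h_{\al\b}$ (using $D^0_{\al\b}=0$ and the Christoffel computation \eqref{chat} for the normal form \eqref{g_e normalform}), the only difference being that you phrase the second derivative along a boundary-tangent geodesic as the Hessian $\hn\,dr$ restricted to $T\p\oM_e$. The additional remarks on independence of the defining function are correct but not needed for the argument.
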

\begin{proof}
Recall that this means that 
if $r$ is a defining function for $\p\oM_e$ with $r>0$ in $M_e$ and if   
$\hg$ is a nonconstant geodesic of $\hn$ such that $r(\hg(0))=0$ and      
$dr\big({\hg}\,'(0)\big)=0$,
then $\p_\t^2(r\circ\hg)|_{\t=0}<0$.  Write $g$ in normal form  
\eqref{eq:normal_form} relative to a conformal representative $h$ on
$\p\oM$, so that ${}^eg$ has the form \eqref{g_e normalform} 
on $M_e$.  Letting $\hG^k_{ij}$ (resp. ${}^e\G_{ij}^k$) denote the
Christoffel symbols of $\hn$ (resp. the Christoffel symbols of
the Levi-Civita connection ${}^e\n$ of ${}^eg$) an easy calculation (see
\eqref{chat} below) shows that ${}^e\G^0_{\al\be}=2k_{\al\be}=2h_{\al\be}$
on $\p\oM_e$.  Since $D^0_{\al\be}=0$, we have at $\t=0$:    
\[
\p_\t^2(r\circ \hg) = -\hG^0_{ij}{\hg}^i{}'\,{\hg}^j{}'\,
= -\hG^0_{\al\be}{\hg}^{\al}{}'\,{\hg}^{\be}{}'\,
=-{}^e\G^0_{\al\be}{\hg}^{\al}{}'\,{\hg}^{\be}{}'\,
=-2h_{\al\be}{\hg}^{\al}{}'\,{\hg}^{\be}{}'\,<0.  
\]
\end{proof}

It will be convenient to embed $\oM_e$ in a smooth compact manifold without 
boundary $\tM$ and to extend $\hn$ to a smooth connection on $\tM$, also 
denoted $\hn$.  If $\hg$ is a geodesic of $\hn$ with $\hg(0)\in \oM_e$, 
set $\t_\pm(\hg):=\pm\sup\{{\t\geq 0}:\hg(t)\in \oM_{e}  
\text{ for } 0\leq \pm t\leq  \t\}$.  
If $U\subset \oM_e$ (usually a small neighborhood of $p\in\p\oM$ or its
closure), we define the set $\hO_U$    
of \textit{$U$-local geodesics of $\hn$} by  
\begin{equation}\label{eq:omega_local_geodesics}
\hO_U:=\big\{\hg:
|\t_\pm(\hg)|<\infty,\;\; |\t_+(\hg)|+ |\t_-(\hg)|> 0,\;\;
     \hg(t)\in U\text{ for }t\in [\t_-(\hg),\t_+(\hg)]\big\}. 
\end{equation}
Here the requirement $|\t_+(\hg)|+ |\t_-(\hg)|> 0$  excludes geodesics
tangent to $\p \oM_e$.  

If $f\in C(\oU)$, set
\begin{equation}\label{eq:defn_of_x_ray}
\hI f(\hg)=\int_{\t_-(\hg)}^{\t_+(\hg)}f(\hg(\t))\,d\t,\qquad \hg\in
\hO_U.   
\end{equation}
The $U$-local X-ray transform of $f$ is the collection of all  
$\hI f(\hg)$, $\hg\in \hO_U$.  

Recall that the parametrization of a geodesic of any connection on 
$TM_e$ is determined up to an affine change   
$\tau\to a\tau + b$, $a\neq 0$.  Such a reparametrization changes
$\hI f(\hg)$ by multiplication by $|a|^{-1}$.  In particular, whether or not    
$\hI f(\hg)=0$ is independent of the parametrization.  It suffices to
restrict attention to geodesics whose parametrization satisfies a
normalization condition.  
For instance, in the next section we fix a background metric $g^0$ and
require that $|{\hg}'(0)|_{g^0}=1$.   

Next we relate $I$ and $\hI$.  This involves relating objects on   
$M$ with objects on $M_e$.  Since $\cI:M\to M_e$ is the identity map, this  
amounts to viewing the same object in a different smooth structure, i.e. in
different coordinates near the boundary.  We  
suppress writing explicitly the compositions with the charts $\psi_\al$,
$\varphi_\al$.  So the 
expression of the identity in these coordinates is $\cI(\r,s)=(\r^2,s)$.       
Likewise, $g$ and ${}^eg$ are related in coordinates by setting $r=\r^2$,
as in \eqref{g_e normalform}.  If $f$ is a function defined on $M$, we can
regard $f$ as a function $f_e$ on $M_e$, related in coordinates by 
$f(\r,s)=f_e(\r^2,s)$.   If $U\subset M$, set $U_e = \cI(U)$.   

If $\g(t)$ is a $U$-local geodesic for $g$,
it is also a geodesic for ${}^eg$.  Since ${}^e\n$ is projectively
equivalent to $\hn$, \eqref{parameterchange} and \eqref{smoothconnection} 
imply that $\hg(\t):=\g(t(\t))$ is a geodesic for $\hn$, where 
$dt/d\t = c\big(r\circ\g(t(\t))\big)^{-1}$.  Different choices of $c$ determine
different parametrizations; imposition of a normalization condition on the
parametrization as mentioned above provides one way to specify $c$ 
for each geodesic.  The relation between $I$ and $\hI$ follows easily:  
\begin{equation}\label{Irelation}
If(\g)=\int_{-\infty}^\infty f(\g(t))\,dt 
=|c|\int_{\t_-(\hg)}^{\t_+(\hg)} (r^{-1}f_e)(\g(t(\t)))\,d\t
=|c|\hI(r^{-1}f_e)(\hg).  
\end{equation}

Section 3.4 of \cite{Uhlmann2016} shows that if $U_e$
is a sufficiently small open neighborhood of $p\in \p\oM_e$, then the
$U_e$-local X-ray transform for a smooth metric extends to a bounded
operator on $L^2(U_e)$ with target space $L^2$   
of a parametrization of the space of $U_e$-local geodesics with respect to
a suitable measure.  The same argument holds in our setting for a smooth   
connection such as $\hn$.  We will not make explicit the target $L^2$ space  
since we are only concerned here with injectivity.  

Equation \eqref{Irelation} shows that it is important to understand when
$r^{-1}f_e\in L^2(U_e)$.  Making the change of variable $r=\r^2$ in the
integral gives 
\[
\int (r^{-1}f_e)^2\,drds
=2\int (\r^{-2}f)^2\r\,d\r ds
=2\int (\r^{-3/2}f)^2\,d\r ds. 
\]
Thus $r^{-1}f_e\in L^2(U_e,drds)$ if and only if  
$f\in \r^{3/2}L^2(U,d\r ds)$.  In particular, 
$If(\g)=|c|\hI(r^{-1}f_e)(\hg)$ provides a definition of 
$If$ for $f\in \r^{3/2}L^2(U,dv_{\og})$ consistent with its usual
definition.  

The main result of \cite{Uhlmann2016} is local injectivity of the geodesic 
X-ray transform for a smooth metric on a manifold with strictly convex 
boundary.  However, the proof applies just as well for the X-ray 
transform for a smooth connection such as $\hn$.  In particular, the
construction in the main text of the cutoff function $\chi$ for which the
boundary principal symbol is elliptic is also valid 
for a connection since the right-hand side of the geodesic  
equation ${\g}^k\,''=-\G_{ij}^k{\g}^i\,'{\g}^j\,'$ is a quadratic
polynomial in ${\g}'$.  We do not need the extension of Zhou discussed
in the appendix of \cite{Uhlmann2016}, although that more general result
applies as well.  The main result of \cite{Uhlmann2016} transferred to our
setting is as follows.   
\begin{theoremofothers}[\cite{Uhlmann2016}]\label{UV1}
Assume that $\dim \oM_e\geq 3$ and let $p\in \p\oM_e$.  Every neighborhood
$O_e$ of $p$ in $\oM_e$ contains a neighborhood $U_e$ of $p$ so that the
$U_e$-local X-ray transform of $\hn$ is injective on $L^2(U_e)$.    
\end{theoremofothers}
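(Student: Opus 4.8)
The statement is the local injectivity theorem of \cite{Uhlmann2016}, and the plan is to obtain it by running that proof with the Levi-Civita connection of a smooth metric replaced throughout by the smooth torsion-free connection $\hn$ (torsion-free because it is projectively equivalent to a Levi-Civita connection via the symmetric tensor in \eqref{smoothconnection}), paying attention to the few points where the metric itself is used. The geometric input is already in place: by Lemma~\ref{lm:cnvx_bdry}, $\p\oM_e$ is strictly convex with respect to $\hn$, and $\oM_e$ has been embedded in a closed manifold $\tM$ to which $\hn$ extends smoothly. First I would fix, as in \cite{Uhlmann2016}, an artificial boundary function $\hat{x}\in C^\infty(\tM)$ with $\hat{x}(p)=0$ and $d\hat{x}(p)=-dr(p)$; since strict convexity is an open condition on the second fundamental form of $\hn$, the level sets $\{\hat{x}=-\eta\}$ are strictly convex (from the side $\hat{x}\geq-\eta$) for $\hn$ for all sufficiently small $\eta>0$, yielding the lens-shaped regions $U_\eta=\{\hat{x}\geq-\eta\}\cap\{r\geq0\}$ that shrink to $p$ as $\eta\to0$ (Figure~\ref{figurename_artificial_boundary}).

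Next I would reproduce the microlocal construction of \cite{Uhlmann2016}: average $\hI$ over the $U_\eta$-local geodesics of $\hn$ through a given point, conjugate by exponential weights carrying the parameter $\s$, and insert the angular cutoff $\chi$, obtaining an operator $\oA_{\chi,\eta,\s}$ acting on functions supported in $U_\eta$. The crucial structural facts survive the passage to a connection. The operator $\oA_{\chi,\eta,\s}$ lies in Melrose's scattering calculus on $\oX_\eta:=\{\hat{x}\geq-\eta\}$, since this only uses the geodesic flow of $\hn$ near the convex boundary hypersurfaces; and the construction of the cutoff $\chi$ making the boundary principal symbol elliptic uses $\hn$ only through the right-hand side of the geodesic equation ${\hg}^k\,''=-\hG_{ij}^k{\hg}^i\,'{\hg}^j\,'$, a quadratic polynomial in ${\hg}'$, exactly as for a metric. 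In particular the extension due to Zhou from the appendix of \cite{Uhlmann2016} is not needed.

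With ellipticity of $\oA_{\chi,\eta,\s}$ in the scattering calculus, a parametrix construction together with a Fredholm and semiclassical argument as in \cite{Uhlmann2016} shows that, for $\eta$ sufficiently small, $\oA_{\chi,\eta,\s}$ has trivial kernel when acting on functions supported in $U_\eta$ and satisfies the stability estimate $\|f\|_{L^2(U_\eta)}\leq C\|\oA_{\chi,\eta,\s}f\|_{H_{\Sc}^{1,0}(O_\eta)}$ of \eqref{stability}, with $O_\eta$ a neighborhood of $U_\eta$ in $\oX_\eta$. Since $\oA_{\chi,\eta,\s}$ factors through $\hI$ restricted to $U_\eta$-local geodesics, this estimate forces injectivity of the $U_\eta$-local X-ray transform of $\hn$ on $L^2(U_\eta)$. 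Given an arbitrary neighborhood $O_e$ of $p$ in $\oM_e$, we have $U_\eta\subset O_e$ for $\eta$ small, so $U_e=U_\eta$ with such $\eta$ proves the theorem.

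The one place I expect to need care --- everything else being a formal transcription of \cite{Uhlmann2016} --- is confirming that the ellipticity of the boundary principal symbol and the admissibility of the cutoff $\chi$ are unaffected by allowing $\hn$ to be a general torsion-free connection in place of a Levi-Civita connection. Once the relevant symbol computations are seen to depend on $\hn$ only through its Christoffel symbols, entering quadratically in ${\hg}'$ just as in the metric case, the remainder of the argument, including the scattering-calculus invertibility and the localization-free local estimate, carries over verbatim. One could instead invoke the semiclassical proof of \cite{vasy2020semiclassical}, but it is the explicit $\eta$-dependence of \eqref{stability} furnished by the original argument that is needed later, in the perturbation argument establishing Theorem~\ref{thm:main}.
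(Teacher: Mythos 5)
Your proposal is correct and matches the paper's treatment: the paper likewise obtains this result by observing that the proof of \cite{Uhlmann2016} carries over verbatim to a smooth connection such as $\hn$, with the only point requiring verification being that the construction of the cutoff $\chi$ making the boundary principal symbol elliptic still works, which holds because the right-hand side of the geodesic equation is a quadratic polynomial in ${\g}'$ — exactly the point you single out. The paper also notes, as you do, that Zhou's extension from the appendix of \cite{Uhlmann2016} is not needed.
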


\begin{proof}[Proof of Theorem~\ref{thm:main} for $g$ even]
The relation \eqref{Irelation} shows that 
$f\in \r^{3/2}L^2(U,dv_{\og})$ is in the kernel of the $U$-local transform
for $g$ if and only if 
$r^{-1}f_e\in L^2(U_e)$ is in the kernel of the $U_e$-local transform for 
$\hn$.  Thus for $g$ even, Theorem~\ref{thm:main} follows immediately from
Theorem~\ref{UV1}. 
\end{proof}

\section{\texorpdfstring{Connections Associated to AH Metrics Even mod
  $O(\r^N)$}{Connections Associated to AH Metrics Even mod
  O(rho N)}}\label{nonsmooth}

If the AH metric $g$ in \eqref{eq:normal_form} is not even, then the
even structure on $(\oM,\p\oM)$ determined by a normal form for $g$ depends
on the choice of normal form.  We fix one such
normal form and thus the even structure  it determines.  We then
construct $(\oM_e,\p\oM_e)$ as above by introducing $r=\rho^2$ as a new
boundary defining function.  The metric ${}^eg$ would be projectively
compact except that the  
corresponding one-parameter family $k_r=h_{\sqrt{r}}$ in 
\eqref{g_e normalform} is no longer smooth: it has an expansion in powers
of $\sqrt{r}$.  The connection $\hn$ defined by \eqref{smoothconnection} 
involves first derivatives of $k_r$.  
As already discussed in the Introduction, assuming that $g$ is even mod
$O(\r^5)$ suffices to guarantee that $\hn$ is Lipschitz continuous, and, in
fact,  
that it extends to be $C^1$ up to $\p\oM_e$, though not necessarily $C^2$.    
Near $\p \oM_e$,  $\hn$ can be viewed as a perturbation of a smooth connection $\on$.

Straightforward calculation from \eqref{g_e normalform} shows that the
Christoffel symbols of the connection $\hn$ defined by
\eqref{smoothconnection} are given in terms of coordinates near a point
$p\in \p \oM_e$ by 
\begin{gather}\label{chat}
\begin{gathered}
\hG_{ij}^0 = 
\begin{pmatrix}
0&0\\
0&2(k_{\al\be}-r\p_rk_{\al\be})
\end{pmatrix},\qquad
\hG_{ij}^\ga = 
\begin{pmatrix}
0&\tfrac12 k^{\ga\de}\p_rk_{\de\be}\\
\tfrac12 k^{\ga\de}\p_rk_{\al\de}&\G_{\al\be}^\ga 
\end{pmatrix},
\end{gathered}
\end{gather}
where $\G_{\al\be}^\ga$ denotes the Christoffel symbols of $k_r$ with
$r$ fixed.  If $g$ is even mod $O(r^N)$ with $N$ odd, then 
$k=k^{(1)}+r^{N/2}k^{(2)}$ with $k^{(1)}$, $k^{(2)}$ smooth.  It    
follows that all $\hG_{ij}^k$ have the form
\[
\hG_{ij}^k=\oG_{ij}^k + r^{N/2-1}B_{ij}^k
\]
with $\oG_{ij}^k$, $B_{ij}^k$ smooth up to $\p\oM_e$.  
The expressions $\oG_{ij}^k$, $B_{ij}^k$ can be interpreted as the
Christoffel symbols of a smooth connection $\on$ on $\oM_e$ and the
coordinate expression of a $(1,2)$ tensor field $B$ respectively. 
$\on$ and $B$ are not uniquely determined by the connection $\hn$; henceforth we fix one choice for them.
Recall that we have chosen a closed manifold $\tM$ containing $\oM_e$.
Choose some smooth extension of $\on$ to a neighborhood of $\oM_e$, also
denoted 
$\on$.  Then extend $\hG$ by 
\begin{equation}\label{defhG}
\hG_{ij}^k=\oG_{ij}^k + r^{N/2-1}H(r)B_{ij}^k
\end{equation}
where $H(r)$ is the Heaviside function.  The extended connection $\hn$ is
then $C^{(N-3)/2}$ and the two connections $\hn$, $\on$ agree outside of $M_e$.  

An important consequence of the special structure of the connection $\hn$ is
 that its exponential map is more regular than one would 
expect.  We consider the exponential map in the form  
$\hexp:T\tM\to \tM\times \tM$, 
defined by $\hexp(z,v)=(z,\hphi(1,z,v))$, where $t\to \hphi(t,z,v)$ is the 
geodesic with $\hphi(0,z,v)=z$, $\hphi\,'(0,z,v)=v$.  
Since $\hn$ is $C^{(N-3)/2}$ and $N\geq 5$, usual ODE theory implies that
$\hexp$ is a $C^{(N-3)/2}$ diffeomorphism from a neighborhood of the zero
section onto its image.  In fact, it has one more degree of
differentiability.  We formulate the result in terms of the inverse
exponential map since that is how we will use it.  
\begin{lemma}\label{lm:regularity}
Let $\hn$ be the $C^{(N-3)/2}$ connection  defined by
\eqref{defhG}, where $N\geq 5$ is an odd integer. 
Then $\hexp^{-1}$ is $C^{(N-1)/2}$ in a neighborhood in $\tM\times \tM$ of the diagonal in $\p\oM_e\times \p\oM_e$.  
\end{lemma}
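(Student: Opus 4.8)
The plan is to show that although $\hn$ has only $C^{(N-3)/2}$ Christoffel symbols, the loss of one derivative in the passage from the connection to the exponential map is compensated by the special structure of the nonsmooth term $r^{N/2-1}H(r)B$, which is a power of the boundary defining function times a smooth tensor. The key point is that near a boundary point the geodesic $t\mapsto\hphi(t,z,v)$ whose endpoints are in $\p\oM_e$ — and more generally for $z$, $\hphi(1,z,v)$ both close to $\p\oM_e$ — spends essentially all of its parameter interval in the region where $r$ is small, so each differentiation of the flow that hits the $r^{N/2-1}$ factor produces a gain of a positive power of $r$, and this borrowed smoothness can be traded back for the ``extra'' derivative. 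First I would set up the geodesic flow as a first-order ODE system on $T\tM$ with right-hand side $F(z,v) = (v, -\hG_{ij}^k(z)v^iv^j)$, where $\hG$ is as in \eqref{defhG}, and recall that $F = F_0 + r^{N/2-1}H(r)\,F_1$ with $F_0$, $F_1$ smooth on $T\tM$; here $r$ is extended smoothly and $F_1$ is supported (in the $z$-variable) near $\oM_e$.

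Next I would introduce the rescaled/resolved picture that makes the gain visible. The natural move is to work in coordinates $(r,s)$ near $p\in\p\oM_e$ and to change the time variable, or equivalently to restrict attention to the relevant family of geodesics — those with $z\in\tM$ near $p$ and $v$ such that $\hphi(1,z,v)$ is also near $p$ — and to observe, using strict convexity of $\p\oM_e$ with respect to $\hn$ (Lemma~\ref{lm:cnvx_bdry}) together with its stability under the $C^1$ perturbation, that along such a geodesic $r\circ\hphi(t,z,v) = O(\delta)$ uniformly, where $\delta\to 0$ as $z$, $\hphi(1,z,v)$ approach $\p\oM_e$. Then I would differentiate the flow map $(z,v)\mapsto\hphi(t,z,v)$ with respect to its arguments $(N-1)/2$ times. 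The derivatives satisfy linearized ODEs (Jacobi-type equations and their higher analogs) whose inhomogeneous terms are built from derivatives of $F$; the only obstruction to a further derivative is the top-order term $\partial^{(N-1)/2}\big(r^{N/2-1}H(r)F_1\big)$, in which $\lceil (N-1)/2\rceil - (N/2-1) = 1/2$... more precisely at least one factor of $r^{1/2}$ (since $N$ is odd, $N/2-1$ is a half-integer and $(N-1)/2 - (N/2 - 1) = 1/2$) survives, so this term is $O(r^{1/2}) = O(\delta^{1/2})$, hence bounded and in fact H\"older continuous. A Grönwall/Duhamel argument on the variational systems then shows that the $(N-1)/2$-th derivatives of $\hphi(1,z,v)$ exist and are continuous on the relevant neighborhood. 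This gives that $\hexp$ is $C^{(N-1)/2}$ near the diagonal of $\p\oM_e\times\p\oM_e$ (in the appropriate one-sided/extended sense), and the conclusion for $\hexp^{-1}$ follows because $d\hexp$ is invertible along the zero section over $\p\oM_e$, so the inverse function theorem (applicable at the $C^{(N-1)/2}$, in fact $C^1$-at-minimum, level) transfers the regularity to $\hexp^{-1}$ on a neighborhood of the diagonal in $\p\oM_e\times\p\oM_e$.

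The main obstacle, and the part requiring genuine care rather than bookkeeping, is making precise and uniform the statement ``geodesics with endpoints near $\p\oM_e$ stay in $\{r = O(\delta)\}$'' and, more delicately, controlling the $H(r)$ — i.e. the set where the geodesic crosses $r=0$ into the extension region. One must check that the nonsmoothness introduced by the Heaviside function does not spoil the gain: when the geodesic is transverse to $\{r=0\}$ the crossing contributes a jump in a derivative of $\hG$, but convexity forces the geodesics we care about to be nearly tangent to $\{r=0\}$ with $r$ attaining an interior max of size $O(\delta)$, so the flow map is still differentiable enough through the crossing after absorbing the $O(\delta^{1/2})$ factor. Quantifying this — choosing the neighborhood of the diagonal, the bound $\delta$, and tracking that every term in the $(N-1)/2$-fold variational equation carries a compensating power of $r$ except for a single harmless $O(r^{1/2})$ — is the technical heart of the argument; everything else is standard smooth dependence of ODE solutions on initial data and parameters.
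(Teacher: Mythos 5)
There is a fatal arithmetic error at the heart of your argument. You claim that in the top-order term $\p^{(N-1)/2}\big(r^{N/2-1}H(r)F_1\big)$ ``at least one factor of $r^{1/2}$ survives,'' computing $(N-1)/2-(N/2-1)=1/2$. But the exponent remaining after $m$ differentiations of $r^{a}$ is $a-m$, not $m-a$: here $a-m=(N/2-1)-(N-1)/2=-1/2$, so the worst contribution to the $(N-1)/2$-th derivative of the right-hand side of the geodesic ODE behaves like $r^{-1/2}$ and is \emph{unbounded} as $r\to 0$ (for $N=5$: $\p_r^2(r^{3/2})\sim r^{-1/2}$). Your localization ``relevant geodesics stay in $\{r=O(\delta)\}$'' makes matters worse rather than better, since $r^{-1/2}$ is large precisely where $r$ is small; and for geodesics tangent to $\p\oM_e$, which occur arbitrarily close to the diagonal of $\p\oM_e\times\p\oM_e$, the minimum of $r$ along the geodesic can be $0$, so no uniform bound is available. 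Consequently the variational system for the $(N-1)/2$-th derivatives has unbounded coefficients and the Gr\"onwall/Duhamel step collapses. (Even if the exponent were $+1/2$, mere boundedness or H\"older continuity of the top derivative of the right-hand side would give Lipschitz $(N-3)/2$-th derivatives of the flow, not continuous $(N-1)/2$-th derivatives.)

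The extra derivative is not obtained by brute-force differentiation of the variational equations but by exploiting a structural feature your proposal never uses. From \eqref{chat}, the only Christoffel symbols of regularity worse than $C^{(N-1)/2}$ are $\hG^\ga_{0\be}=\hG^\ga_{\be 0}$, and in the geodesic equation they are contracted with $\td{v}^{\,0}=\td{r}\,'$. Hence the bad term $-2\td{r}^{N/2-1}H(\td{r})B^\ga_{0\be}(\td{z})\td{v}^{\,0}\td{v}^{\,\be}$ is, modulo a $C^{(N-1)/2}$ remainder, the total $t$-derivative of $-\tfrac{4}{N}\td{r}^{N/2}H(\td{r})B^\ga_{0\be}(\td{z})\td{v}^{\,\be}$, which involves the \emph{more} regular function $r^{N/2}H(r)\in C^{(N-1)/2}$. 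The change of dependent variables $\td{b}^{\,\ga}=\td{v}^{\,\ga}+\tfrac{4}{N}\td{r}^{N/2}H(\td{r})B^\ga_{0\be}(\td{z})\td{v}^{\,\be}$ then converts the system into one with $C^{(N-1)/2}$ right-hand side, as in \eqref{goodone} and \eqref{eq:reqwritten_ode}, after which the conclusion is standard smooth dependence on initial data (your final inverse-function-theorem step to pass to $\hexp^{-1}$ is fine). To rescue your approach you would need to locate and use exactly this cancellation; without it, the claimed gain of one derivative is false already at the level of the ODE's right-hand side.
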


\begin{proof} 

It suffices to show that $T\tM\ni (z,v)\to \hphi(1,z,v)\in \tM$ is 
$C^{(N-1)/2}$ near $(z,0)$ for $z\in \p\oM_e$.  
Work in coordinates $(r,s)$ 
for $z$ with respect to which ${}^eg$ is in normal form 
\eqref{g_e normalform}.  Set $z=(z^0,z^\a)=(r,s^\a)$.  For $v$ use 
induced coordinates $v=(v^0,v^\a)$ with
$v=v^0\p_r+v^\a\p_{s^\a}=v^i\p_{z^i}$ and set $w=(z,v)$.  
Write the flow as 
$\hphi(t,w)=(\td{z}(t,w),\td{v}(t,w))$.
The geodesic equations are:  
\begin{equation}\label{geodeq}
(\td{z}\,^k)'=\td{v}\,^k,\qquad 
(\td{v}\,^k)'=-\hG_{ij}^k(\td{z})\td{v}\,^i\td{v}\,^j.
\end{equation}
Observe from \eqref{chat} that all $\hG_{ij}^k$ are $C^{(N-1)/2}$ except
for $\hG_{0\al}^\ga=\hG_{\al 0}^\ga$.  So the right-hand sides of all
equations in \eqref{geodeq} are $C^{(N-1)/2}$ except for the equation for 
$(\td{v}\,^\ga)'$.  By \eqref{chat}, \eqref{defhG}, this equation has the
form 
\begin{equation}\label{badone}
(\td{v}\,^\ga)'= A_{ij}^\ga(\td{z})\td{v}\,^i\td{v}\,^j
-2\td{r}^{N/2-1}H(\td{r})B_{0\be}^\ga(\td{z})\td{v}\,^0\td{v}\,^\be 
\end{equation}
with $A_{ij}^\ga$ of regularity $C^{(N-1)/2}$ and $B_{0\b}^\ga$ smooth.
Using $\td{r}\,'=\td{v}\,^0$, write
\begin{align}
-2\td{r}^{N/2-1}H(\td{r})B_{0\be}^\ga(\td{z})\td{v}\,^0\td{v}\,^\be 
=&-\frac{4}{N}\Big(\td{r}^{N/2}H(\td{r})\Big)'B_{0\b}^\ga(\td{z})\,\td{v}\,^\b\\   
=&-\frac{4}{N}\Big(\td{r}^{N/2}H(\td{r})B_{0\b}^\ga(\td{z})\,\td{v}\,^\b\Big)'\\  
&\quad +
\frac{4}{N}\td{r}^{N/2}H(\td{r})\Big(B_{0\b}^\ga{}_{,k}(\td{z})\,
\td{v}\,^k\td{v}\,^\b+B_{0\b}^\ga(\td{z})\,(\td{v}\,^\b)'\Big)\\  
=&-\frac{4}{N}\Big(\td{r}^{N/2}H(\td{r})B_{0\b}^\ga(\td{z})\,\td{v}\,^\b\Big)'
+\frac{4}{N}\td{r}^{N/2}H(\td{r})C_{ij}^\g(\td{z})\td{v}\,^i\td{v}\,^j,
\end{align}
where for the last equality we have used \eqref{geodeq} for
$(\td{v}\,^\b)'$, so that
\[
C_{ij}^\g(\td{z})\td{v}\,^i\td{v}\,^j
=B_{0\b}^\ga{}_{,k}(\td{z})\,\td{v}\,^k\td{v}\,^\b
-B_{0\b}^\g(\td{z})\hG^{\b}_{ij}(\td{z})\td{v}^i\td{v}^j.
\]
Note that 
$\td{r}^{N/2}H(\td{r})C_{ij}^\g(\td{z})\td{v}\,^i\td{v}\,^j$ is
$C^{(N-1)/2}$.

Therefore \eqref{badone} can be rewritten in the form
\begin{equation}\label{goodone}
\Big(v^\g+\frac{4}{N}\td{r}^{N/2}H(\td{r})B_{0\b}^\ga(\td{z})\,\td{v}\,^\b\Big)' 
=\Big(A_{ij}^\ga(\td{z})
+\frac{4}{N}\td{r}^{N/2}H(\td{r})C_{ij}^\g(\td{z})\Big)\td{v}\,^i\td{v}\,^j.
\end{equation}
Now the linear transformation $\td{v}\mapsto \td{b}=L(\td{z})\td{v},$ where
$\td{b}\,^\g=\td{v}\,^\g+\frac{4}{N}\td{r}^{N/2}H(\td{r})B_{0\b}^\g(\td{z})\,\td{v}\,^\b$,  
is of class $C^{(N-1)/2}$ in $(\td{z},\td{v})$ and 
is invertible for $\td{r}$ small.  
Replacing \eqref{badone} by \eqref{goodone} in \eqref{geodeq} and setting 
$\td{v}=L^{-1}(\td{z})\td{b}$ throughout, we obtain 
a system of ODE of the form 
\begin{equation}\label{eq:reqwritten_ode} 
\Big(\td{z},\td{v}\,^0,\td{b}\Big)'
=F\Big(\td{z},\td{v}\,^0,\td{b}\Big), 
\end{equation}
where $F$ is $C^{(N-1)/2}$.
It follows that the map $(t,z,v)\mapsto \widehat{\phi}(t,z,v)$ is of class 
$C^{(N-1)/2}$ upon setting $\td{b}\,^\g=L^\g_\b(\td{z})\td{v}\,^\b$.   
\end{proof}

Lemma~\ref{lm:cnvx_bdry} (the strict convexity of $\p\oM_e$) holds for  
both $\hn$ and $\on$ if $g$ is even mod $O(\r^N)$ with $N\geq 5$
odd, with the same proof as before.  We define the sets $\hO_U$, $\oO_U$ of
$U$-local geodesics for $\hn$ and $\on$ the same way as before.  It will be
important to have a common parametrization for the sets of geodesics of $\hn$
and $\on$.  For this purpose, we will fix a smooth background metric $g^0$
on $\tM$ (this will be done  in Section \ref{estimates}).   
There is no canonical way of choosing $g^0$ and the choice made does not
affect  the conclusions, but a convenient choice will simplify some
computations. 
Once a metric $g^0$ has been fixed, we let $S^0\tM$ denote  
its unit sphere bundle.  For $v\in S^0\tM$, denote by $\hg_v$,
(resp. $\oga_v$) the geodesic for $\hn$ (resp. $\on$) with initial vector
$v$. 
 We define the $U$-local X-ray transforms for $\hn$ and $\on$ just as
in \eqref{eq:defn_of_x_ray}, except now we view them as functions on the
subsets of $S^0\tM$ corresponding to $\hO_U$, $\oO_U$:  
\[
\hI f(v)=\int_{\t_-(\hg_v)}^{\t_+(\hg_v)}f(\hg_v(\t))\,d\t   
\]
and similarly for $\oI f(v)$.  Sometimes we will use the notation $If(v)$
generically for $\hI f(v)$ or $\oI f(v)$, or, for that matter, for the
$U$-local X-ray transform for any $C^1$ connection on a manifold with
strictly convex boundary.  No confusion will arise with the notation
$If(\g)$ from Section~\ref{sec:proof_in_even_case} for the X-ray transform
for the AH metric $g$, since we will not be dealing with $g$ again except 
implicitly in the isolated instance where we deduce Theorem~\ref{thm:main}.

\section{Stability and Perturbation Estimates}\label{estimates}

We continue to work with the connections $\hn$ and $\on$ obtained from an
AH metric even mod $O(\r^N)$ with $N\geq 5$.
From now on it will always be assumed that the dimension of $\oM$ (and thus also of $\oM_e$) is at least 3.
  Since $\on$ is smooth and
$\p\oM_e$ is strictly convex with respect to it, 
Theorem~\ref{UV1} (local injectivity) holds also for $\on$.  
As mentioned in the Introduction, in order to
deduce local injectivity for $\hn$ we will use the stability estimate
derived  in \cite{Uhlmann2016} for the conjugated microlocalized normal
operator $\oA_{\chi,\eta,\s}$, formulated in terms of 
scattering Sobolev spaces.  In this section we review those spaces,
the construction of the microlocalized normal operator, and the stability
estimate proved in \cite{Uhlmann2016}. 
Then we formulate our main perturbation estimate 
(Proposition~\ref{prop:mapping_diff}) and show how Theorem~\ref{thm:main}
follows from it. 
Proposition~\ref{prop:mapping_diff} will be proved in Section~\ref{kernelanalysis}.  
In this section we work almost entirely on $\oM_e$  and its extension $\tM$
(with the exception of the very last proof), so we will not be using the
subscript $e$ for its various subsets to avoid cluttering the notation.

We now define 
polynomially weighted scattering Sobolev spaces on a 
compact manifold with boundary $(\oX^{n+1},\p\oX)$.
Let $x$ be a boundary defining function for $\oX$.
The space of \textit{scattering vector fields}, denoted by
$\calV_{\Sc}(\oX)$, consists of the smooth vector fields on $\oX$
which are a product of $x$ and a smooth vector field tangent to $\p \oX$. 
Thus  if $(x,y^1,\dots, y^n)$ are coordinates near $p\in\p \oX$,  elements
of $\calV_{\Sc}(\oX)$ can be written near $p$ as linear combinations over
$C^\infty(\oX) $ of the vector fields $x^2\p_x,$ $x\p_{y^\a}$,
$\a=1,\dots, n$.  If $k\in \mathbb{N}_0$ and $\b\in \R$  let
\begin{equation}
\begin{aligned}\label{eq:sobolev}
  &H_{\Sc}^{k,\b}(\oX) =\{u\in
  x^{\b}L^2(\oX):x^{-\b}V_1\dots 
  V_mu\in L^2(\oX)\text{ for } V_j\in \calV_{\Sc}(\oX)\text{ and
  }0\leq m\leq k\}; 
\end{aligned}
\end{equation}
here $L^2$ is
 defined using a smooth measure on $\oX$
\footnote{Our notation slightly differs from that of \cite{Uhlmann2016} in
  that we use a smooth measure rather the scattering measure
  $x^{-(\dim\o{X}+1)}dxdy$ to define our base $L^2$ space. The spaces here
  and in \cite{Uhlmann2016} are the same up to shifting the weight by
  $(\dim\o{X}+1)/2$.}. 
Note that $H^{0,\b}_{\Sc}(\o{X})=x^{\b}L^2(\oX)$.
For $s\geq0$, $H^{s,\b}_{\Sc}(\oX)$ can be defined by interpolation and for
$s<0$ by duality, though we will not need this. 
The norms $ \|\cdot \|_{H_{\Sc}^{k,\b}(\oX)}$ can be defined by  fixing
scattering vector fields in coordinate patches on $\oX$  that locally span
$\calV_{\Sc}(\oX)$ over $C^\infty(\oX)$; 
any different choice of vector fields would result in an equivalent norm.
If $U$ is a neighborhood of $p\in \p \oX$ (or the closure of one) 
then 
$H_{\Sc}^{k,\b}(U)$ consists of functions of the form $u\big|_{U}$, where 
$u\in H_{\Sc}^{k,\b}\left(\oX\right)$.

\smallskip

We next review the arguments and results we
will need from \cite{Uhlmann2016}, starting with the construction of the
artificial boundary mentioned in the introduction. 

\begin{lemma}[\cite{Uhlmann2016}, Sec. 3.1]\label{lm:artificial_boundary}
Let $p\in \p \oM_{e}$ and $\n$ be a $C^1$ connection with respect to which $\p
\oM_{e}$ is strictly convex. 
There exists a smooth function $\hat{x}$ in a neighborhood $\calU$ of $p$ in $\tM $ with the properties:
\begin{enumerate}
    \item  $\hat{x}(p)=0$
    \item $d \hat{x}(p)=-dr(p)$\label{it:second_prop} (recall that $r$ is a boundary defining function for $\oM_e$)
    \item \label{it:third_prop}Setting $x_\eta:=\hat{x}+\eta$, for any neighborhood $\td{O}$ of $p$ in $\tM$
      there exists an $\eta_0$ such that $U_\eta:=\{r\geq 0\}\cap \{\x
      \geq0\}\subset \td{O}$ for $\eta\leq \eta_0$ 
    \item For $\eta$ near 0 (positive or negative) the set
      $X_\eta:=\{\hat{x}>-\eta\}=\{\x>0\}\subset \tM$ has strictly concave
      boundary with respect to $\n$ locally near $p$.
      \footnote{Recall that this means that for any $\n$-geodesic $\g(t)$
        with $x_\eta(\g(0))=0$ and $dx_\eta(\g'(0))=0$ one has
        $\frac{d^2}{dt^2}\big|_{t=0}x_\eta\circ \g(t)>0$. } 
 \end{enumerate} 
 The level sets of $\hat{x}$ can be seen in Figure \ref{figurename_artificial_boundary}.
\end{lemma}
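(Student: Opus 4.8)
I would reprove the lemma by recalling the construction of \cite[Sec.~3.1]{Uhlmann2016}. The key observation is that $-r$ already has strictly concave level sets near $p$: strict convexity of $\p\oM_e$ with respect to $\n$ says precisely that the Hessian $\n dr$, the symmetric $(0,2)$-tensor with components $\p_i\p_j r-\G_{ij}^k\p_k r$ (continuous since $\n$ is $C^1$), is negative definite on $T_p\p\oM_e$; equivalently $-\,\n dr\big|_{T_p\p\oM_e}$ is positive definite. So it suffices to perturb $-r$ by a small negative definite quadratic form in the directions along $\p\oM_e$, which preserves concavity of the level sets while forcing the caps $U_\eta$ to collapse onto $p$.

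Concretely, I would extend $r$ smoothly to $\tM$, with $r<0$ off $\oM_e$ near $\p\oM_e$, and choose coordinates $(r,y^1,\dots,y^n)$ on a ball $\calU$ around $p$ with $y^\a(p)=0$, so that $T_p\p\oM_e=\ker dr|_p$ is spanned by $\p_{y^1},\dots,\p_{y^n}$ at $p$. Pick $\lambda_0>0$ with $-\,\n dr\big|_{T_p\p\oM_e}\geq\lambda_0\sum_\a(dy^\a)^2$ (possible by strict convexity), fix $c$ with $0<c<\lambda_0/2$, and set
\[
\hat x:=-r-c\,|y|^2 \quad\text{on }\calU,\qquad |y|^2:=(y^1)^2+\dots+(y^n)^2 .
\]
Since $r(p)=0$ and $y(p)=0$, one has $\hat x(p)=0$ and $d\hat x(p)=-dr(p)$, giving (1) and (2). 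For (3): on $U_\eta=\{r\geq0\}\cap\{x_\eta\geq0\}$ we have $r\geq0$ and $r+c|y|^2\leq\eta$, hence $U_\eta\subset\{0\leq r\leq\eta,\ |y|\leq\sqrt{\eta/c}\}$, which lies inside any prescribed $\td O\ni p$ once $\eta\leq\eta_0$ for $\eta_0$ small.

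It remains to verify (4). I would compute $\n d\hat x$ near $p$ and restrict it to the level-set directions $\ker d\hat x$. Since $\hat x$ is smooth and $\n$ is $C^1$, $\n d\hat x$ has continuous components; and since $d\hat x(p)=-dr(p)\neq0$, the subspace $\ker d\hat x|_z$ depends continuously on $z$ near $p$. At $p$ the Christoffel contribution to $\n d(|y|^2)\big|_p$ vanishes because $y(p)=0$, so $\n d(|y|^2)\big|_p=2\sum_\a dy^\a\otimes dy^\a$ and hence $\n d\hat x\big|_p=-\,\n dr\big|_p-2c\sum_\a dy^\a\otimes dy^\a$; restricting to $T_p\p\oM_e=\ker d\hat x|_p$ and using the choice of $\lambda_0$ shows that $\n d\hat x\big|_p$ is $\geq(\lambda_0-2c)\sum_\a(dy^\a)^2>0$ there, i.e.\ positive definite on $\ker d\hat x|_p$. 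By the continuity just noted, $\n d\hat x|_z$ remains positive definite on $\ker d\hat x|_z$ for $z$ in a neighborhood of $p$. For any $\n$-geodesic $\g$ with $x_\eta(\g(0))=0$ and $dx_\eta(\g'(0))=0$ one has $\g'(0)\in\ker d\hat x|_{\g(0)}$ and
\[
\tfrac{d^2}{dt^2}\Big|_{t=0}x_\eta\circ\g \;=\; \tfrac{d^2}{dt^2}\Big|_{t=0}\hat x\circ\g \;=\;(\n d\hat x)\big(\g'(0),\g'(0)\big),
\]
which is $>0$ when $\g(0)$ is close enough to $p$ --- the strict concavity of $\p X_\eta$ near $p$, for $\eta$ near $0$.

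The only step calling for care is (4): getting the sign of the Hessian right --- this is exactly what constrains the perturbation parameter to $0<c<\lambda_0/2$ --- and then checking that positive-definiteness, which holds at $p$, persists on a neighborhood of $p$; the latter is a routine continuity argument that goes through here because the $C^1$ assumption on $\n$ already makes $\n d\hat x$ continuous. I do not anticipate a genuine obstacle.
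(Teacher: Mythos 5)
The paper does not prove this lemma --- it is quoted directly from \cite[Sec.~3.1]{Uhlmann2016} --- and your reconstruction is correct and is essentially the construction used there: $\hat{x}=-r-c\,|y|^2$ with $0<c$ small enough that the negative-definite perturbation does not destroy the strict concavity coming from $-\n dr$, while still forcing the caps $U_\eta$ to collapse onto $p$. The sign bookkeeping in step (4), the identity $\frac{d^2}{dt^2}\big|_{t=0}(x_\eta\circ\g)=(\n d\hat{x})(\g'(0),\g'(0))$, and the continuity argument extending positive definiteness from $\ker d\hat{x}|_p$ to nearby level-set tangent spaces are all handled correctly.
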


Write $Y_p=\{\hat{x}=0\}$; by shrinking $\calU$ we can assume that $Y_p\cap \calU$ is a smooth hypersurface of $\tM$. 
We can then identify a neighborhood of $p$ in $\tM$ with
$(-\e,\e)_{\hat{x}}\times Y_p$  for some small $\e>0$ via a diffeomorphism
$\phi_0$ (which can be constructed e.g. using the flow of a vector field
transversal to $Y_p$). 
Fixing coordinates $y^1,\dots , y^n$ for $Y_p$ centered at $p$,  choose the
metric $g^0$ so that in  a neighborhood of $p$ it is Euclidean in terms of
coordinates $ (\hat{x},y^1,\dots,y^n)$. 

For  $\calU'$ a small neighborhood of $p$ contained in $\calU$ and $\eta\in
\R$ small, denote by $\psi_\eta:\calU'\to \tM$ the map which in terms of
the above identification maps $(x,y)\mapsto (x+\eta,y)$. 
For a fixed small $0<\d_0\ll\e$ and $0\leq \eta<\d_0$, we can  identify a
neighborhood of  $\p \oX_\eta$ in $\tM $ with $(-\d_0,\d_0)_{x_\eta}\times
Y_p $ via the diffeomorphism  $\phi_\eta=\phi_0\circ \psi_{\eta}$.
Note that $g^0$ is also Euclidean in terms of coordinates $(x_\eta,y^1,\dots,y^n)$.
Moreover, $\oX_\eta$ is  given locally near its boundary by
$[0,\d_0)_{x_\eta}\times Y_p$ in terms of this identification 
and $\psi_{-\eta}$ maps diffeomorphically a neighborhood of $\p \oX_0$ in
$\oX_0$ onto one of $\p \oX_\eta$ in $\oX_\eta$, with inverse
$\psi_{\eta}$. 
Vectors in $S^0_z\tM$, $z\in \o{X}_\eta$, can be written as
$v=\l\p_{\x}+\omega$, where $\omega\in T Y_p$ (of course not necessarily of
unit length, so our setup slightly differs from the one in
\cite{Uhlmann2016}, see Remark \ref{rmk:uv_explanation} below). 
Henceforth, the notation $|v|$  for a vector $v$ will refer to norm with
respect to $g^0$ (which is Euclidean in our coordinates in the region of
interest).

In order to show local injectivity of the X-ray transform, one needs a
description of geodesics staying within a given neighborhood:

\begin{lemma}[\cite{Uhlmann2016}, Section 3.2]
\label{lm:geod_cond}
Let $\n$ be a $C^1$ connection with respect to which $\oM_{e}$ has strictly
convex boundary.
There exist constants  $\td{C}>0$, $0<\d_1<\d_2$, $c_0>0$ and $\eta_0>0$, and neighborhood $Z_p$ of $p$ in $Y_p$,
such that if $0\leq \eta< \eta_0$ and if $\g(t)$ is a $\n$-geodesic with
initial position $z=(x,y)\in [0,c_0]_\x\times \o{Z}_p\subset \oX_\eta$ and
velocity $v=(\l,\omega)\in S^0_z\tM$ satisfying 
\begin{equation}\label{eq:condition_on_initial_velocity}
  \frac{|\l|}{|\omega|}\leq  \td{C}\sqrt{x} %
\end{equation}
then  
one has $\x\circ\g(t)\geq 0$ for $|t|\leq
\d_2$ and $\x\circ\g(t)\geq c_0$ for $|t|\geq \d_1$.
See Fig. \ref{fig:level_sets}.
\end{lemma}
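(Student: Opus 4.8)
The plan is to control the trajectory of a geodesic $\g(t)$ near the point $p$ by a Taylor-expansion-with-remainder argument in $t$, using the strict concavity of the artificial boundary $\{x_\eta = 0\}$ together with a smallness assumption on the initial velocity in the normal direction. First I would write $f(t) := x_\eta \circ \g(t)$ and compute $f'(0) = dx_\eta(\g'(0))$ and $f''(0) = (\text{Hess-like expression for }\n)(\g'(0),\g'(0))$, where the second derivative along a geodesic picks up the Christoffel symbols of $\n$ contracted with $\g'(0)\otimes\g'(0)$ (this is exactly the quantity appearing in the definition of strict concavity in Lemma~\ref{lm:artificial_boundary}(4)). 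Since $x_\eta = \hat x + \eta$ and $d\hat x(p) = -dr(p)$ with $g^0$ Euclidean in the $(x_\eta, y)$ coordinates, writing $v = \l\p_{x_\eta} + \omega$ gives $f'(0) = \l$ up to a controlled error, and $f''(0)$ is bounded below by a positive multiple of $|\omega|^2$ when $z$ is close enough to $p$ (shrinking $\calU'$, $c_0$, $\eta_0$ and $Z_p$), because at $p$ itself strict concavity forces the second-order term to be positive definite in the tangential directions.

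The key estimate is then the following: by Taylor's theorem with the Lagrange/integral form of the remainder, for $|t|$ bounded one has
\[
f(t) = f(0) + f'(0)\,t + \tfrac12 f''(0)\,t^2 + R(t),
\]
where $|R(t)| \le C|t|^3$ uniformly (using that $\n$ is $C^1$, so the right-hand side of the geodesic equation is $C^1$, which is enough for a $C^{2,\,\text{loc-Lipschitz-ish}}$ bound on $f$ — actually $f$ is $C^2$ with $f''$ continuous, and one gets the cubic remainder from the modulus of continuity of $f''$ on a compact set, possibly with $o(t^2)$ rather than $O(t^3)$, which still suffices). Combining $f(0) = x \ge 0$, $|f'(0)| = |\l| \le \td C \sqrt{x}\,|\omega|$, $f''(0) \ge c|\omega|^2$, and $|\omega| \le 1$, I would show that for a suitable choice of constants the quadratic-plus-remainder lower bound $f(t) \ge x - \td C\sqrt{x}\,|\omega|\,|t| + \tfrac{c}{2}|\omega|^2 t^2 - C|t|^3$ stays $\ge 0$ for $|t| \le \d_2$: the cross term is dominated by completing the square against the $t^2$ term (using $\sqrt{x}\cdot\sqrt{x} = x$, so $\td C\sqrt x |\omega||t| \le \tfrac14(\td C^2 x + |\omega|^2 t^2 \cdot(\dots))$, where the geometric-mean inequality is precisely what forces the $\sqrt x$ in \eqref{eq:condition_on_initial_velocity}), and the cubic error is absorbed by shrinking $\d_2$. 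For the second conclusion, once $|t| \ge \d_1$ the quadratic term $\tfrac{c}{2}|\omega|^2 t^2$ dominates — but one must rule out the degenerate case $|\omega|$ small, where $|\l|$ is also forced small by \eqref{eq:condition_on_initial_velocity} and $|v| = 1$ forces $\l$ and $\omega$ to nearly balance; here I would instead argue that when $|\omega|$ is not bounded below, the constraint $\l^2 + |\omega|^2 \approx 1$ together with $|\l| \le \td C\sqrt x |\omega|$ gives a lower bound on $|\omega|$ in terms of $x$ bounded (for $x \le c_0$ with $c_0$ small, $|\omega|$ is bounded below by a fixed constant close to $1$), so in fact $|\omega|$ is always comparable to $1$ on the relevant set and the degeneracy does not occur.

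I would also need a complementary argument ensuring $\g(t)$ does not leave the coordinate neighborhood before time $\d_2$ (so that the Taylor estimate is legitimate): this follows from continuity of the geodesic flow and compactness, after first fixing $\d_2$ small and then shrinking $c_0$, $\eta_0$, $Z_p$ so that geodesics starting in $[0,c_0]_{x_\eta}\times \o Z_p$ with unit speed stay in $\calU'$ for $|t| \le \d_2$. The main obstacle I anticipate is the bookkeeping around the constants and the order of quantifiers: one must choose $\d_2$ (hence the concavity-remainder bound) first, then $\td C$, $\d_1$, $c_0$, $\eta_0$, $Z_p$ in the right dependent order, and simultaneously handle the non-degeneracy of $|\omega|$ and the geometric-mean splitting of the cross term so that everything closes. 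The actual differential-geometric input is light — it is entirely the strict concavity of Lemma~\ref{lm:artificial_boundary}(4) plus the $C^1$ regularity of $\n$ — and the proof is essentially the standard "escape lemma" for the Uhlmann–Vasy artificial boundary, adapted to a $C^1$ connection where one only has $f \in C^2$ with continuous (not Lipschitz) second derivative, so the remainder should be stated as $o(t^2)$ uniformly rather than $O(t^3)$ to be safe.
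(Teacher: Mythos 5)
The paper does not actually prove this lemma --- it is quoted from Section 3.2 of the cited Uhlmann--Vasy reference --- but your argument is correct and is essentially the one given there: a lower bound $\frac{d^2}{dt^2}\,(x_\eta\circ\g)\ge c\,|\omega|^2>0$ coming from the strict concavity of the level sets of $\hat x$ (with $|\omega|$ bounded below since $|\l|\le \td C\sqrt{c_0}\,|\omega|$ and $|v|=1$), integrated twice and combined with $|\l|\le\td C\sqrt{x}\,|\omega|$ by completing the square against the initial value $f(0)=x$, which is precisely what forces the $\sqrt{x}$ in the hypothesis. Your quantifier ordering and the treatment of the remainder are fine; the only simplification worth noting is that one can avoid the $o(t^2)$ Taylor remainder entirely by using the pointwise second-derivative bound along the whole trajectory (valid while $\g$ stays in the coordinate patch, which your compactness argument guarantees), so that $f(t)\ge x-\td C\sqrt{x}\,|t|+\tfrac{c}{2}t^2$ holds exactly and the convexity of $f$ also gives the escape conclusion for all $|t|\ge\d_1$.
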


By taking $\eta_0\ll c_0$ in Lemma \ref{lm:geod_cond} and by Lemma
\ref{lm:artificial_boundary} one can always assume that a neighborhood of
$U_\eta$ in $\oX_\eta$ is contained in  $  [0,c_0]_\x\times\o{ Z}_p$, and
we will henceforth assume that this is the case. 
Now let $\d_1$ and $\n$ be as in Lemma \ref{lm:geod_cond} and 
  let $\exp:T\tM\to \tM$ be the exponential 
map of $\n$.  
If $v\in S_z^0\tM$ satisfies
the assumptions of the lemma
and $f$ is continuous and supported in $  [0,c_0)_\x\times{ Z}_p$, we have  
$If(v)=\int_{-\d_1}^{\d_1}f(\exp(tv))dt$, so for all such $v$ and $f$ one
can define the X-ray transform by integrating only over a fixed finite
interval. 
The authors of \cite{Uhlmann2016} consider $If$ only on vectors $v=(\l,\omega)\in S_z^0\tM$ 
satisfying a stronger condition, namely that for some positive constant
$C_2$ one has $\frac{|\l|}{|\omega|}\leq C_2x$ with $z=(x,y)\in [0,c_0)_\x\times {Z}_p$ for $\eta$
sufficiently small, and construct a microlocalized normal operator for $I$.
Specifically, with  $f$ as before and $\chi\in C_c^\infty(\R)$ with $\chi\geq 0$ and
$\chi(0)=1$, let  
\begin{equation}\label{eq:Istar_chi_I}
    {A}_{\chi,\eta}
    f(z):=\int_{S^0_z\tM}\chi\!\left(\frac{\l}{|\omega|x}\right)I
    f(v)d{\mu}_{g^0},\quad z=(x,y)\in [0,c_0)_\x\times {Z}_p,   
\end{equation}
where $d{\mu}_{g^0}$ is the measure induced on $S^0_z\tM$ by
$g^0|_{T_z\tM}$. 
Note that for any $C_2$, $c_0$ can be chosen sufficiently small that
\eqref{eq:condition_on_initial_velocity} is automatically satisfied in $[0,c_0)_\x\times {Z}_p$. 
The constant $C_2$ is fixed when $\chi\in C_c^\infty(\R)$ is chosen (see
Proposition \ref{prop:pdo} below), and then $c_0$, $\eta_0$ can be chosen so that
the integrand in \eqref{eq:Istar_chi_I} is only supported on vectors corresponding to geodesics staying in $\oX_\eta$. 
Finally for $\s>0$ define the conjugated microlocalized normal operator: 
\begin{equation}\label{eq:conjugated_operator}
    {A}_{\chi,\eta,\s}:=x_\eta^{-2}e^{-\s/\x}{A}_{\chi,\eta}e^{\s/\x}.  
\end{equation}
We denote this operator in case $\n=\on$ (resp. $\hn$) by  
$\oA_{\chi,\eta,\s}$ (resp. $\hA_{\chi,\eta,\s}$).  
In the case of the smooth connection $\on$ on $\oM_e$, for which $\p \oM_e$
is strictly convex, and in dimension $\geq 3$, it was proved in
\cite[Proposition 3.3]{Uhlmann2016} that $\oA_{\chi,\eta,\s}$  are
scattering pseudodifferential operators (in the notation there,
$\oA_{\chi,\eta,\s}\in \Psi_{sc}^{-1,0}(\oX_\eta)$). 
This implies that they also act on scattering Sobolev spaces.
The following Proposition contains the stability estimate we will need in terms of such  spaces.

\begin{figure}[ht]
  \includegraphics[scale=.4]{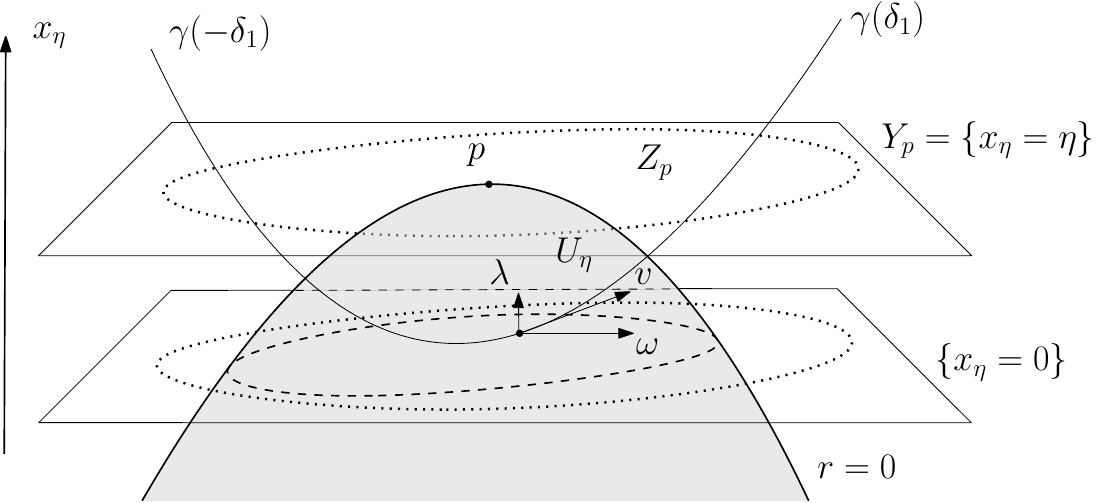}
  \caption{The level sets of $x_\eta$}
  \label{fig:level_sets}
\end{figure}

\begin{proposition}[\cite{Uhlmann2016}, Sections 2.5 and 3.7]\label{prop:pdo}
Suppose as before that $\dim(\oM_e)\geq 3 $ and let $\s>0$. There exists 
$\chi_0\in C^\infty_c(\R)$, $\chi_0\geq 0$, $\chi_0(0)=1$, such that for
any sufficiently small neighborhood $O$ of $p\in \p \oM_e$ in $\oX_0$
there exist $\eta_0>0$ and $C_0>0$ with the property that for $0\leq
\eta\leq \eta_0$ one has   $U_\eta\subset O_\eta:=\psi_{-\eta}(O)\subset
\oX_\eta$, and the estimate 
\begin{equation}\label{eq:stability_o_nabla}
  \|u\|_{x^\b L^2(U_\eta)}\leq
  C_0\|{\oA}_{\chi_0,\eta,\s}u\|_{H^{1,\b}_{\Sc}(O_\eta)}, \quad \b\in \R,
\end{equation}
where $u\in x^{\b}L^2(U_\eta)$ is extended by 0 outside $U_\eta$. 
Here the Sobolev spaces on subsets of $\oX_\eta$ are defined by pulling
back by $\psi_\eta$ the corresponding spaces on subsets of $\oX_0$. 
\end{proposition}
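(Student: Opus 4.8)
The plan is to obtain \eqref{eq:stability_o_nabla} by transferring the relevant parts of \cite{Uhlmann2016}---the construction of the cutoff in \cite[\textsection 2.5]{Uhlmann2016} and the stability estimate in \cite[\textsection 3.7]{Uhlmann2016}---to the present setting, in which $\on$ is a general smooth connection with $\p\oM_e$ strictly convex rather than the Levi-Civita connection of a smooth metric, and in which the weight $\b$ is arbitrary. Neither generalization causes difficulty. The metric enters the constructions of \cite{Uhlmann2016} only through the geodesic flow, and since the right-hand side $-\oG^k_{ij}\dot{\g}^i\dot{\g}^j$ of the geodesic equation for $\on$ is a quadratic polynomial in the velocity exactly as in the metric case, every step carries over verbatim. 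Likewise the whole argument is valid at each weight $\b\in\R$, since the scattering calculus treats all polynomial weights on an equal footing; alternatively one conjugates $\oA_{\chi_0,\eta,\s}$ by $x^\b$ to reduce to $\b=0$.

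First I would recall the choice of $\chi_0$. The boundary principal symbol of $\oA_{\chi,\eta,\s}$ is a model operator obtained by freezing coefficients at a boundary point and linearizing the geodesic flow of $\on$; it agrees with the one computed in \cite[\textsection 2.5]{Uhlmann2016}, so one may take the same $\chi_0\in C^\infty_c(\R)$, $\chi_0\geq 0$, $\chi_0(0)=1$, with support small enough that this symbol is elliptic for $\eta$ in the relevant range (this also fixes the constant $C_2$ appearing before \eqref{eq:Istar_chi_I}). The exponential conjugation in \eqref{eq:conjugated_operator}, with $\s>0$, is what upgrades $A_{\chi_0,\eta}$ to an operator in Melrose's scattering calculus, and \cite[Proposition 3.3]{Uhlmann2016}---transferred as above---gives $\oA_{\chi_0,\eta,\s}\in\Psi^{-1,0}_{\Sc}(\oX_\eta)$, elliptic in that calculus; in particular it maps $H^{0,\b}_{\Sc}\to H^{1,\b}_{\Sc}$. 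Ellipticity furnishes a parametrix $Q\in\Psi^{1,0}_{\Sc}$ with $Q\,\oA_{\chi_0,\eta,\s}=\mathrm{Id}+R$, $R$ a smoothing scattering operator, hence the semi-Fredholm estimate
\[
\|u\|_{x^\b L^2(U_\eta)}\leq C\big(\|\oA_{\chi_0,\eta,\s}u\|_{H^{1,\b}_{\Sc}(O_\eta)}+\|Ru\|_{x^\b L^2(O_\eta)}\big)
\]
for $u\in x^\b L^2(U_\eta)$ extended by zero, with $C$ and the operator $R$ uniformly controlled for small $\eta$.

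The substantive step---the only place where \cite{Uhlmann2016} does genuine work---is to absorb the relatively compact error $\|Ru\|_{x^\b L^2(O_\eta)}$ once $\eta$ is small. This is \cite[\textsection 3.7]{Uhlmann2016}: combining the estimate above with the uniqueness argument, which exploits the strict concavity with respect to $\on$ of the artificial boundary $\{\x=0\}$ near $p$ (Lemma~\ref{lm:artificial_boundary}), one shows that $\oA_{\chi_0,\eta,\s}$ has trivial kernel on $x^\b L^2(U_\eta)$ for all $\eta\leq\eta_0$; a routine functional-analytic argument then upgrades the semi-Fredholm estimate to \eqref{eq:stability_o_nabla} with $C_0$ uniform for $0\leq\eta\leq\eta_0$. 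Finally, the inclusions $U_\eta\subset O_\eta:=\psi_{-\eta}(O)\subset\oX_\eta$ for $O$ small and then $\eta_0$ small are just the content of Lemmas~\ref{lm:artificial_boundary} and \ref{lm:geod_cond} together with the definition of $\psi_{-\eta}$. I expect the only real obstacle to be bookkeeping: choosing $\chi_0$, $C_2$, $c_0$, $\eta_0$ and $C_0$ in a consistent order and verifying that the constants remain uniform as $\eta\downarrow 0$---all of which is carried out in \cite{Uhlmann2016}.
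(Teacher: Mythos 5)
Your proposal follows the same basic route as the paper: Proposition~\ref{prop:pdo} is not proved here but imported from \cite{Uhlmann2016}, together with the observation (already made in Section~\ref{sec:proof_in_even_case}) that the arguments there apply to a smooth connection because the right-hand side of the geodesic equation is still quadratic in the velocity, and that the scattering calculus handles all weights $\b$ uniformly. Two caveats. First, the content the paper actually supplies for this proposition is the translation of conventions carried out in Remark~\ref{rmk:uv_explanation}: the operator in \cite{Uhlmann2016} has a factor of $x^{-1}$ built into the analogue of $A_{\chi,\eta}$ and is conjugated with $x_\eta^{-1}$ rather than the single $x_\eta^{-2}$ of \eqref{eq:conjugated_operator}; geodesics there are parametrized by $(\l,\omega)\in\R\times\S^{n-1}$ with measure $d\l\,d\omega$ rather than by $g^0$-unit vectors; and replacing $\|\cdot\|_{H^{1,\b}_{\Sc}(\oX_\eta)}$ by $\|\cdot\|_{H^{1,\b}_{\Sc}(O_\eta)}$ on the right-hand side uses that the Schwartz kernel of $\oA_{\chi_0,\eta,\s}$ is localized near $U_\eta$ in both factors. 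These bookkeeping points are exactly what make the cited estimate coincide with \eqref{eq:stability_o_nabla}, and your write-up passes over them.

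Second, and more substantively, your account of how \cite[\S 3.7]{Uhlmann2016} produces a constant $C_0$ uniform over $0\le\eta\le\eta_0$ is not the mechanism used there, and the route you describe would not deliver uniformity. A semi-Fredholm estimate combined with injectivity of $\oA_{\chi_0,\eta,\s}$ yields, by the usual contradiction/compactness argument, a constant for each fixed $\eta$ with no control as $\eta\downarrow 0$; since the whole point of Proposition~\ref{prop:pdo} in this paper is to feed a $C_0$ independent of $\eta$ into the perturbation argument of Section~\ref{estimates} (Corollary~\ref{cor:inj_pc_conn}), this cannot be left to a ``routine functional-analytic argument.'' What \cite{Uhlmann2016} actually does is exploit that the parametrix error gains a power of the boundary defining function, while $x_\eta\le C\eta$ on $U_\eta$ (since $\hat{x}\approx -r\le 0$ on $\oM_e$ and $U_\eta$ has diameter $O(\sqrt{\eta})$); hence the error term applied to $u$ supported in $U_\eta$ is $O(\eta^{c})\|u\|_{x^\b L^2}$ and is absorbed directly for small $\eta$, with no appeal to injectivity at that stage. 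Injectivity is then a consequence of the estimate, not an ingredient of it.
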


\begin{remark}\label{rmk:uv_explanation}
The estimate stated in  \cite[Section 3.7]{Uhlmann2016} amounts to
\begin{equation}
\|u\|_{H^{s,\b}_{\Sc}(\oX_\eta)}\leq
  C_0\|{\oA}_{\chi_0,\eta,\s}u\|_{H^{s+1,\b}_{\Sc}(\oX_\eta)},\quad s\geq 0,\; \supp u\subset U_\eta,	\label{eq:original_statement}
\end{equation}	
upon taking into account that the analog of $A_{\chi,\eta}$ constructed
there has a factor of $x^{-1}$ incorporated and the polynomial factor 
appearing in the definition of the operator analogous to $A_{\chi,\eta,\s}$
is $x_\eta^{-1}$, whereas we used a factor of $x_\eta^{-2}$ in
$A_{\chi,\eta,\s}$  directly.  For $s=0$ the space on the left hand side of
\eqref{eq:original_statement} is exactly $x^\b L^2(U_\eta)$. 
On the other hand, the upper bound in \eqref{eq:original_statement} can be
replaced by the one in \eqref{eq:stability_o_nabla} provided $\supp
u\subset U_\eta$, since the Schwartz kernel of the operators
$\oA_{\chi_0,\eta,\s}$ has been localized in both factors near $U_\eta$,
see for instance \cite[Remark 3.2]{Uhlmann2016}. 

The way we construct the operators $A_{\chi,\eta}$ also differs from the
setup of \cite{Uhlmann2016} in that we parametrize geodesics by their
initial velocities normalized so that they have unit length with respect to
the (Euclidean near $p$) metric $g^0$, and average the transform over them
using the measure induced by $g^0$ on the fibers of $S^0\tM$ (i.e. the
standard measure on the unit sphere $\S^n$).  
In \cite{Uhlmann2016} the geodesics are parametrized by writing their
initial velocities as $(\l,\omega)\in \R\times \S^{n-1}$ using coordinates,
and the measure used for averaging is $d\l d\omega$, where $d\omega$ is the
standard measure on $\S^{n-1}$. 
However this difference doesn't affect the analysis, as already remarked
there (see Remark 3.1 and the proof of Proposition 3.3). 
\end{remark}

\begin{remark}
As remarked in \cite[Lemma 3.6]{Uhlmann2016}, Proposition~\ref{prop:pdo}
holds for any $\chi_0$ sufficiently close to a specific Gaussian in the
topology of Schwartz space. 
In particular, $\chi_0$ can be taken to be even, and from now on we assume 
that this is the case, since this simplifies the notation.  
\end{remark}

Let $\chi_0$ be as in Proposition \ref{prop:pdo}, chosen to be even.  Let
$\s>0$ be fixed.  Define 
\[
  E_{\eta,\s}:=\oA_{\chi_0,\eta,\s}-\hA_{\chi_0,\eta,\s}
\]
Note that by construction the operator $\oA_{\chi_0,\eta,\s}$
(resp. $\hA_{\chi_0,\eta,\s}$) depends on the behavior of the connection
$\on$ (resp. $\hn$) only in the set $\x\geq 0$, provided $\eta_0$, $c_0$
above are sufficiently small.  
Therefore $E_{0,\s}=0$, since the two connections agree outside of $M_e$.

In Section \ref{ssec:analysis_on_blowups} we will prove 
the following key proposition:

\begin{proposition}\label{prop:mapping_diff}
Let $\s>0$. 
Provided $O$ is a sufficiently small neighborhood of $p\in \pM_e$ in
$\oX_0$, for each $\d>0$  there exits $\eta_0>0$ with the property that if
$0\leq\eta< \eta_0$ one has   $U_\eta \subset O_\eta=\psi_{-\eta}(O)$ and 
\begin{equation}\label{eq:injectivity_estimate}
\|E_{\eta,\s}u\|_{H_{\Sc}^{1,0}(O_\eta)}
\leq \d  \|u\|_{L^2(U_\eta)}
\end{equation}
for all $u\in
 L^2(U_\eta)$ extended by 0 outside of $U_\eta$.

\end{proposition}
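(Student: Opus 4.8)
The plan is to realize $E_{\eta,\s}$ as an integral operator and bound its $L^2(U_\eta)\to H_{\Sc}^{1,0}(O_\eta)$ norm by a Schur estimate on its Schwartz kernel. First I would produce the kernel of $A_{\chi_0,\eta,\s}$ for a $C^1$ connection $\n$: using Lemma~\ref{lm:geod_cond} to write $If(v)=\int_{-\d_1}^{\d_1}f(\exp(tv))\,dt$ on the relevant cone of initial velocities, and changing variables $(t,v)\mapsto z'=\exp(tv)$, one gets $A_{\chi_0,\eta}f(z)=\int K^\n(z,z')f(z')\,dz'$ with $K^\n$ assembled from the inverse exponential map $\exp^{-1}(z,z')$, the cutoff $\chi_0$ evaluated along it, and the Jacobian of the change of variables (a negative power of geodesic distance times the Jacobian of $\exp^{-1}$ in $z'$, hence involving first derivatives of $\exp^{-1}$). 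Conjugating by $x_\eta^{-2}e^{\mp\s/\x}$ gives the kernel $K_{\eta,\s}^\n(z,z')=x_\eta(z)^{-2}e^{\s(1/\x(z')-1/\x(z))}K^\n(z,z')$ of $A_{\chi_0,\eta,\s}$, so $E_{\eta,\s}$ has kernel $K_{\eta,\s}^{\on}-K_{\eta,\s}^{\hn}$, which by Remark~\ref{rmk:uv_explanation} we may assume supported in a fixed small neighborhood of $U_\eta\times U_\eta$. Since $\calV_{\Sc}(O_\eta)$ is spanned near $p$ by $x_\eta^2\p_{x_\eta}$ and $x_\eta\p_{y^\a}$, it then suffices by the Schur criterion to bound, for $W$ the identity or one of these vector fields acting in the $z$ variable, both $\sup_z\int_{U_\eta}|W_z(K_{\eta,\s}^{\on}-K_{\eta,\s}^{\hn})|\,dz'$ and $\sup_{z'\in U_\eta}\int|W_z(K_{\eta,\s}^{\on}-K_{\eta,\s}^{\hn})|\,dz$ by $\d$ for $\eta$ small.

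To expose the singularities of these kernels, one lifts $K_{\eta,\s}^{\on}$ and $K_{\eta,\s}^{\hn}$ to the blown-up double space constructed in Section~\ref{kernelanalysis} (an analogue of Melrose's double stretched space, cf.\ \cite{MR1291640}), obtained by blowing up $\tM\times\tM$ along the diagonal over $\p\oM_e\times\p\oM_e$ together with the relevant boundary faces. On this space the conormal singularity of $K^\n$ along the diagonal becomes a front-face singularity of standard type, the weight factor $e^{\s(1/\x(z')-1/\x(z))}$ is controlled (it decays where $\x(z')\gg\x(z)$), and $K^\n$ — and $W_zK^\n$ — become, on account of the Heaviside factor in \eqref{defhG}, piecewise conormal functions whose only dependence on the connection is through $\exp^{-1}$ and its Jacobian. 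Lemma~\ref{lm:regularity} gives that $\exp^{-1}$ is $C^{(N-1)/2}$, hence $C^2$ since $N\ge5$, which is exactly the regularity needed for $W_zK^\n$ to remain integrable up to every boundary face and corner of the blow-up.

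The key difference estimate then comes from $\hn-\on=r^{N/2-1}H(r)B$ with $B$ smooth. This difference is supported in $\{r\ge0\}$ and, on the support of the kernels — a neighborhood of $U_\eta\times U_\eta$, where $r\le C\eta$ — has size $O(\eta^{N/2-1})$. A Gr\"onwall-type comparison of the geodesic systems \eqref{geodeq} for $\hn$ and $\on$, again using the extra half-degree of regularity from Lemma~\ref{lm:regularity} so that no derivative is lost and exploiting that $\hexp$ and $\oexp$ share the same initial data, shows that $\hexp^{-1}$ and $(\oexp)^{-1}$, together with their first derivatives and Jacobians, differ there by $O(\eta^{N/2-1})$, with a singularity along the diagonal milder than that of each kernel separately. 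Hence $W_z(K_{\eta,\s}^{\on}-K_{\eta,\s}^{\hn})$ is bounded on the blown-up space by $C\eta^{N/2-1}$ times a fixed density that is integrable in each variable uniformly in $\eta$, so the two Schur quantities above are $\le C\eta^{N/2-1}$. Since $N\ge5$ we have $N/2-1\ge 3/2>0$, and choosing $\eta_0$ with $C\eta_0^{N/2-1}<\d$ (and shrinking $O$ as in Proposition~\ref{prop:pdo} so that $U_\eta\subset O_\eta$) gives \eqref{eq:injectivity_estimate}.

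The main obstacle is carrying out the blow-up analysis with only $C^1$ regularity of $\hn$: one must track the limited regularity of $\hexp^{-1}$ and of the Jacobian factors through every boundary face and corner, verify that a single scattering derivative in $z$ keeps the piecewise-conormal kernel integrable, and check that the difference really is $O(\eta^{N/2-1})$ with no loss. The factor $r^{N/2-1}$ contributes derivatives of size $r^{N/2-2}$ — for $N=5$ as singular as $r^{-1/2}$ — which is integrable but must be combined carefully with the blow-up coordinates and with the exponential weights near the faces where $r\to 0$ and $\x\to 0$; this face-by-face bookkeeping is the technical heart of Section~\ref{kernelanalysis}.
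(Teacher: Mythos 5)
Your proposal shares the paper's skeleton --- the explicit kernel of $A_{\chi_0,\eta,\s}$ built from the inverse exponential map, the lift to the blown-up double space, and a Schur/Cauchy--Schwarz reduction to $L^1$ bounds of the kernel difference and its scattering derivatives in each variable --- but the mechanism producing smallness is genuinely different. The paper proves no rate in $\eta$ at all: Lemmas \ref{lm:lift_of_kernel_general_connections}--\ref{lm:kernel_difference} show that the lifted kernels are continuous \emph{jointly} in the blown-up variables and in $\eta$, that their leading singularities at the diagonal face $\calG_3$ are independent of the connection and hence cancel in $K_E$ (your ``milder singularity''), that everything vanishes to infinite order at $\calG_{\bd}$ (which tames the $R\to\infty$ tail), and that $E_{0,\s}=0$ because $\on=\hn$ outside $M_e$ while $\oX_0\subset M_e^c$; uniform smallness for small $\eta$ then follows from continuity on a compact set plus vanishing at $\eta=0$. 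You instead propose a quantitative Gr\"onwall comparison of the two geodesic flows, exploiting that $\hn-\on=r^{N/2-1}H(r)B$ with $r=O(\eta)$ on the region $\{\x\geq0\}\cap\{r\geq0\}=U_\eta$ swept by the admissible geodesics, to get an explicit $O(\eta^{N/2-1})$ bound. Your route buys an explicit rate (hence a quantitative dependence of the constants in Corollary \ref{cor:inj_pc_conn} on $\eta$); the paper's route buys the freedom never to differentiate the perturbation in $r$, which is exactly where the singular powers appear.

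That last point is where your quantitative claims overreach. The first derivatives of the flows --- hence the Jacobian $\det(d\,{\exp}^{-1})$ and the argument of $\chi_0$ --- obey variational equations whose coefficients involve $\p\hG$, which contains terms of size $r^{N/2-2}H(r)$, i.e.\ $O(\eta^{1/2})$ for $N=5$; a direct Gr\"onwall comparison therefore yields only $O(\eta^{N/2-2})$ at the derivative level, and the second-derivative quantities entering the Taylor coefficients that control the diagonal singularity of $|{\exp}^{-1}|^{-n}$ are no better, even after rerunning the change of dependent variables from Lemma \ref{lm:regularity} inside the comparison. Since any positive power of $\eta$ suffices for \eqref{eq:injectivity_estimate}, this does not break your argument, but the uniform rate $O(\eta^{N/2-1})$ should not be asserted for all factors. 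Finally, your ``fixed density integrable in each variable uniformly in $\eta$'' is not a consequence of the Gr\"onwall estimate: it requires the uniform-in-$\eta$ exponential decay as $R\to\infty$ of $e^{-\s/x+\s/\td{x}}\chi_0(P)$ (the infinite-order vanishing at $\calG_{\bd}$ in Lemma \ref{lm:kernel_difference}), which must be imported from the argument of \cite{Uhlmann2016} exactly as in the smooth case.
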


\begin{remark}
	In Proposition \ref{prop:mapping_diff} one does not need to assume
        that $\dim(\oM_e)\geq 3$, however if $\dim(\oM_e)=2$ Proposition
        \ref{prop:pdo} does not hold and the proof of Corollary
        \ref{cor:inj_pc_conn} below breaks. 
\end{remark}

\noindent
An immediate consequence of Proposition~\ref{prop:mapping_diff} is the
following:  
\begin{corollary}\label{cor:inj_pc_conn}
With notations as before and assuming that $\dim (\oM_e)\geq 3$,
 there exists $\eta_0>0$ such that for $0<\eta< \eta_0$ the transform 
$f\mapsto \hI f\big|_{\widehat{\Omega}_{U_\eta}}$ is injective on $L^2(U_\eta)$.
\end{corollary}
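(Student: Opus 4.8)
The plan is to combine the stability estimate of Proposition~\ref{prop:pdo} for the smooth connection $\on$ with the perturbation estimate of Proposition~\ref{prop:mapping_diff} via a triangle inequality, exactly as in the classical perturbation scheme. First I would fix $\s>0$ and apply Proposition~\ref{prop:pdo} with $\chi_0$ the chosen even cutoff, obtaining a neighborhood $O$ of $p$ in $\oX_0$ and constants $\eta_0>0$, $C_0>0$ so that for $0\le\eta\le\eta_0$ one has $U_\eta\subset O_\eta$ and
\[
\|u\|_{L^2(U_\eta)}\le C_0\|\oA_{\chi_0,\eta,\s}u\|_{H^{1,0}_{\Sc}(O_\eta)}
\]
for all $u\in L^2(U_\eta)$ extended by zero (this is \eqref{eq:stability_o_nabla} with $\b=0$). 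Writing $\oA_{\chi_0,\eta,\s}=\hA_{\chi_0,\eta,\s}+E_{\eta,\s}$ and using the triangle inequality in $H^{1,0}_{\Sc}(O_\eta)$ gives
\[
\|u\|_{L^2(U_\eta)}\le C_0\|\hA_{\chi_0,\eta,\s}u\|_{H^{1,0}_{\Sc}(O_\eta)}+C_0\|E_{\eta,\s}u\|_{H^{1,0}_{\Sc}(O_\eta)}.
\]

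Next I would invoke Proposition~\ref{prop:mapping_diff} with $\d:=1/(2C_0)$: possibly shrinking $O$ and $\eta_0$, for $0\le\eta<\eta_0$ we have $\|E_{\eta,\s}u\|_{H^{1,0}_{\Sc}(O_\eta)}\le\frac{1}{2C_0}\|u\|_{L^2(U_\eta)}$, so the last term can be absorbed into the left side, yielding the stability estimate
\[
\|u\|_{L^2(U_\eta)}\le 2C_0\|\hA_{\chi_0,\eta,\s}u\|_{H^{1,0}_{\Sc}(O_\eta)}
\]
for all $u\in L^2(U_\eta)$ (extended by zero), for $0<\eta<\eta_0$. In particular $\hA_{\chi_0,\eta,\s}u=0$ forces $u=0$.

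Finally I would unwind the definitions to conclude injectivity of $\hI$ on $\widehat\Omega_{U_\eta}$. Suppose $f\in L^2(U_\eta)$ and $\hI f\big|_{\widehat\Omega_{U_\eta}}=0$. By the geometric setup (Lemmas~\ref{lm:artificial_boundary} and~\ref{lm:geod_cond}, with $c_0$, $\eta_0$ taken small enough as arranged there), every geodesic entering the integrand of $\oA_{\chi_0,\eta}f$ in \eqref{eq:Istar_chi_I} is a $U_\eta$-local geodesic for $\hn$, i.e.\ lies in $\widehat\Omega_{U_\eta}$; hence $\hI f$ vanishes on all such geodesics and $\hA_{\chi_0,\eta}f=0$, so also $\hA_{\chi_0,\eta,\s}f=x_\eta^{-2}e^{-\s/\x}\hA_{\chi_0,\eta}(e^{\s/\x}f)=0$ after noting the conjugation argument applies with $f$ replaced by $e^{\s/\x}f$ — more precisely, apply the stability estimate to $u=e^{\s/\x}f$ (still in $L^2(U_\eta)$ since $\s/\x$ is bounded on $U_\eta$) to get $u=0$, hence $f=0$. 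The main (and really only nontrivial) obstacle is purely bookkeeping: making sure the neighborhood $O$ and threshold $\eta_0$ can be chosen simultaneously small enough for both Proposition~\ref{prop:pdo} and Proposition~\ref{prop:mapping_diff} to hold, and that the support/localization conditions tying $\hA_{\chi_0,\eta}f$ to $\hI f\big|_{\widehat\Omega_{U_\eta}}$ are genuinely in force for those choices — all of which has been prearranged in the statements quoted above, so the proof is short.
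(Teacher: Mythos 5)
Your proposal is correct and takes essentially the same route as the paper: Proposition~\ref{prop:pdo} with $\b=0$, Proposition~\ref{prop:mapping_diff} with $\d=1/(2C_0)$, absorption of the error term, and then factoring $\hA_{\chi_0,\eta,\s}$ through $\hI$. One small slip in your final unwinding: you should apply the stability estimate to $u=e^{-\s/\x}f$, not $u=e^{\s/\x}f$, since $\hA_{\chi_0,\eta,\s}u=x_\eta^{-2}e^{-\s/\x}\hA_{\chi_0,\eta}(e^{\s/\x}u)$ requires $e^{\s/\x}u=f$ to exploit $\hI f=0$; moreover $e^{\s/\x}f$ need not lie in $L^2(U_\eta)$ because $\x$ vanishes on part of $\p U_\eta$ (it is $e^{-\s/\x}\le 1$ that is bounded there), which is exactly why the paper states injectivity on $e^{\s/\x}L^2(U_\eta)\supset L^2(U_\eta)$.
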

\begin{proof}
Fix $\s>0$ and let $\chi_0$ be as in Proposition \ref{prop:pdo}, even. Then
take $O$ sufficiently small, as in Propositions \ref{prop:pdo} and
\ref{prop:mapping_diff}, and let $C_0$ and $\eta_0$ be according to the
former, corresponding to $O$. 
By Proposition \ref{prop:mapping_diff}, upon shrinking $\eta_0$ if necessary, for $0\leq\eta<\eta_0$ we have
$$\|E_{\eta,\s}u\|_{H_{\Sc}^{1,0}(O_\eta)}\leq 1/(2C_0)\|u\|_{L^2(U_\eta)}$$
for $u\in L^2(U_\eta)$ extended by $0$ elsewhere.
Since $\oA_{\chi_0,\eta,\s}=\hA_{\chi_0,\eta,\s}+E_{\eta,\s}$, if
$u\in L^2(U_\eta)$ one has,
for $0\leq \eta< \eta_0$ 
\begin{align}
    &\|u\|_{L^2(U_\eta)} \leq  C_0\|\oA_{\chi_0,\eta,\s}u\|_{H^{1,0}_{\Sc}(O_\eta)}
    \leq C_0\|\hA_{\chi_0,\eta,\s}u\|_{H^{1,0}_{\Sc}(O_\eta)}+C_0\|E_{\eta,\s}u\|_{H^{1,0}_{\Sc}(O_\eta)}\\
    &\quad\leq C_0\|\hA_{\chi_0,\eta,\s}u\|_{H^{1,0}_{\Sc}(O_\eta)}+1/2\|u\|_{L^2(U_\eta)}\implies \|u\|_{L^2(U_\eta)}\leq
2C_0\|\hA_{\chi_0,\eta,\s}u\|_{H^{1,0}_{\Sc}(O_\eta)}.
\end{align}
This implies injectivity of $\hA_{\chi_0,\eta,\s}$ on $L^2(U_\eta)$.
Using the definition of $\hA_{\chi_0,\eta,\s}$, the local X-ray transform
$f\mapsto \hI f\big|_{\widehat{\Omega}_{U_\eta}}$ is injective on
$e^{\s/\x}L^2(U_\eta)\supset L^2(U_\eta)$ for $0<\eta\leq \eta_0$. 
\end{proof}

\begin{proof}[Proof of Theorem \ref{thm:main}.]
The proof presented in Section \ref{sec:proof_in_even_case} for the even
case applies here verbatim, with the only difference that injectivity of
the $U_e$-local transform for $\hn$ on $L^2(U_e)$ now follows from
Corollary~\ref{cor:inj_pc_conn}.  
\end{proof}

\medskip

\section{Analysis of Kernels}\label{kernelanalysis}

The goal of this section is to prove Proposition~\ref{prop:mapping_diff}.
In essence, the proof proceeds as for the classical Schur criterion stating
that an operator is bounded on $L^2$ if its Schwartz kernel is uniformly
$L^1$ in each variable separately (see e.g. 
\cite[Lemma 3.7]{MR1211419}).
Hence 
it is necessary to understand well the properties of the kernels of 
${\oA}_{\chi_0,\eta,\sigma}$ and ${\hA}_{\chi_0,\eta,\sigma}$.  The
fine behavior of these kernels is perhaps best analyzed on a modified
version of Melrose's scattering blown-up space (\cite{MR1291640}), which we
describe  in Section \ref{sec:scattering_calculus}. 
We then
analyze the kernels on it in Section \ref{ssec:analysis_on_blowups}.

\subsection{The Scattering Product}
\label{sec:scattering_calculus}

We start by briefly describing blow-ups in general (for a detailed exposition see \cite{daomwk}).
Let ${Y}^d$ be compact manifold with corners and $Z$ a p-submanifold;  
this means that $Z$ is a submanifold of ${Y}$ with the property that for
each $p\in Z$ there exist coordinates for ${Y}$ of the form $(x_1,\dots,
x_k,y_1,\dots,y_{d-k})\in \o{\R}_+^k\times \R^{d-k}$ centered 
at $p$, with $x_j$ defining functions for boundary hypersurfaces of ${Y}$,
such that in terms of them $Z$ is locally expressed as the zero set of a
subset of the $x_i, $ $y_j$.  
If $Z$ is an {interior p-submanifold} of codimension at least 2, meaning
that it is locally given as $y'=(y_{j_1},\dots,y_{j_s})=0$, $ 2\leq s\leq
d-k$ in terms of such coordinates, blowing up $Z$ essentially 
amounts to introducing polar coordinates in terms of $y'$. 
Formally, let
$SN(Z)\overset{\pi}{\to} Z$ be the spherical normal bundle of $Z$ with fiber at $p\in Z$ given by
$SN_p(Z):=\left((T_p{Y}/T_pZ)\backslash\{0\}\right)/\R^+$.  It can be
shown 
that the \textit{blown up space}  
$\left[{Y};Z\right]:=SN(Z)\amalg ({Y}\backslash Z)$
admits a smooth structure as a manifold with corners such the blow down map
$\beta:\left[{Y};Z\right]\to {Y}$, given by $\beta\big|_{{Y}\backslash
  Z}=Id_{{Y}\backslash Z}$ and $\beta\big|_{SN(Z)}=\pi$, becomes smooth.
The \textit{front face} of the blown up space is given by $SN(Z)\subset [Y;Z]$.
If $Z$ is a {boundary p-submanifold}, i.e. it is contained in a boundary
hypersurface of ${Y}$, the spherical normal bundle is replaced 
by its inward pointing part,
with the rest
of the discussion unchanged.
If $P$ is a p-submanifold of ${Y}$ that intersects $Z$ with the property
 $\o{(P\backslash Z)}=P$, and $\b$ is a blow down map, then the  
\textit{lift} of $P$ is defined as $\b^*(P)=\o{\b^{-1}(P\backslash Z)}$.
If $\b=\b_1\circ\cdots \circ \b_k$ with $\b_j$ blow down maps we will write
$\b^*(P):=\b_k^*(\b_{k-1}^*(\cdots \b_1^*(P)))$.

Now let $(\oX,\p \oX)$ be a smooth compact manifold with boundary; this implies that 
$\oX^2$ is a smooth manifold with corners. 
First define the ${\bd}$-space $\oX^2_{\bd}:=\big[\oX^2;(\p \oX)^2\big]$ with blow down map $\b_1$.
We denote by $\ff_{\bd}$ the front face of this blow-up. 
If $\Delta_{\bd}:=\o{\b^{-1}_1(\Delta^\circ)}$ (the diagonal $\Delta\subset \oX^2$ is
not a p-submanifold), we let the scattering product be
$\oX^2_{\Sc}:=\big[\,\oX_{{\bd}}^2; \p(\Delta_{\bd})\big]$ 
with blow down map $\b_2:\oX^2_{ \Sc}\to \oX^2_{\bd}$. 
Set $\b_{\rm{sc}}=\b_1\circ \b_2$ and
let $\ff_{\rm{sc}}\subset \oX_{\rm{sc}}^2$ be the front face associated with $\b_2$.
We finally introduce a third blown up space obtained from $\oX^2_{\rm{sc}}$ by
blowing up the scattering diagonal ${{\Delta_{\Sc}}}:=\b_2^{*}(\Delta_{\bd})$. 
We denote the new space by $\oX_{\D}^2$ and the corresponding blow
down map by $\b_{3}$; 
let $\b_{{\D}}:=\b_{\rm{\Sc}}\circ \b_{3}$.
This space is pictured in Fig. \ref{fig:blown_up}.
By a result on commutativity of blow-ups (see \cite[Section 5.8]{daomwk}),
$\o{X}_{\D}^2$ is diffeomorphic to
$\big[[\oX_{\bd}^2;\Delta_{\bd}],\td{\b}_2^*(\p \Delta_{\bd})\big]$, where
$\td{\b}_2:[\oX_{\bd}^2;\Delta_{\bd}]\to \oX^2_{\bd}$ is the blow down
map. 
We name the various faces of $\oX_{{\D}}^2$ as in Fig. \ref{fig:blown_up}:
$\calG_{10}:=\b_{\D}^*\left(\p \oX\times \oX\right)$, 	
  $\calG_{01}:=\b_{\D}^*\left(\oX\times\p  \oX\right)$, 
	$\calG_{11}:=\b_{3}^*\left(\b_{2}^*\left(\ff_{\bd}\right)\right)$	and 	$\calG_2:=\b_{3}^*(\ff_{\Sc})$;
finally let $\calG_3$ be the front face associated with $\b_{3}$.
We will occasionally write $\calG_{\bd}$ for the collection of boundary
hypersurfaces $\{\calG_{10}, \calG_{11}, \calG_{01}\}$ and
${\calG}_{\bd}^\cup$ for their union. 
Moreover, if $p\in \p \oX$ and $O$ is a neighborhood
of $p$ in $\oX$ we let $O^2_{\D}:=\b_{{\D}}^{-1}(O^2)$.

\begin{figure}[ht]
    \begin{center}
    \includegraphics[scale=.6
      ]{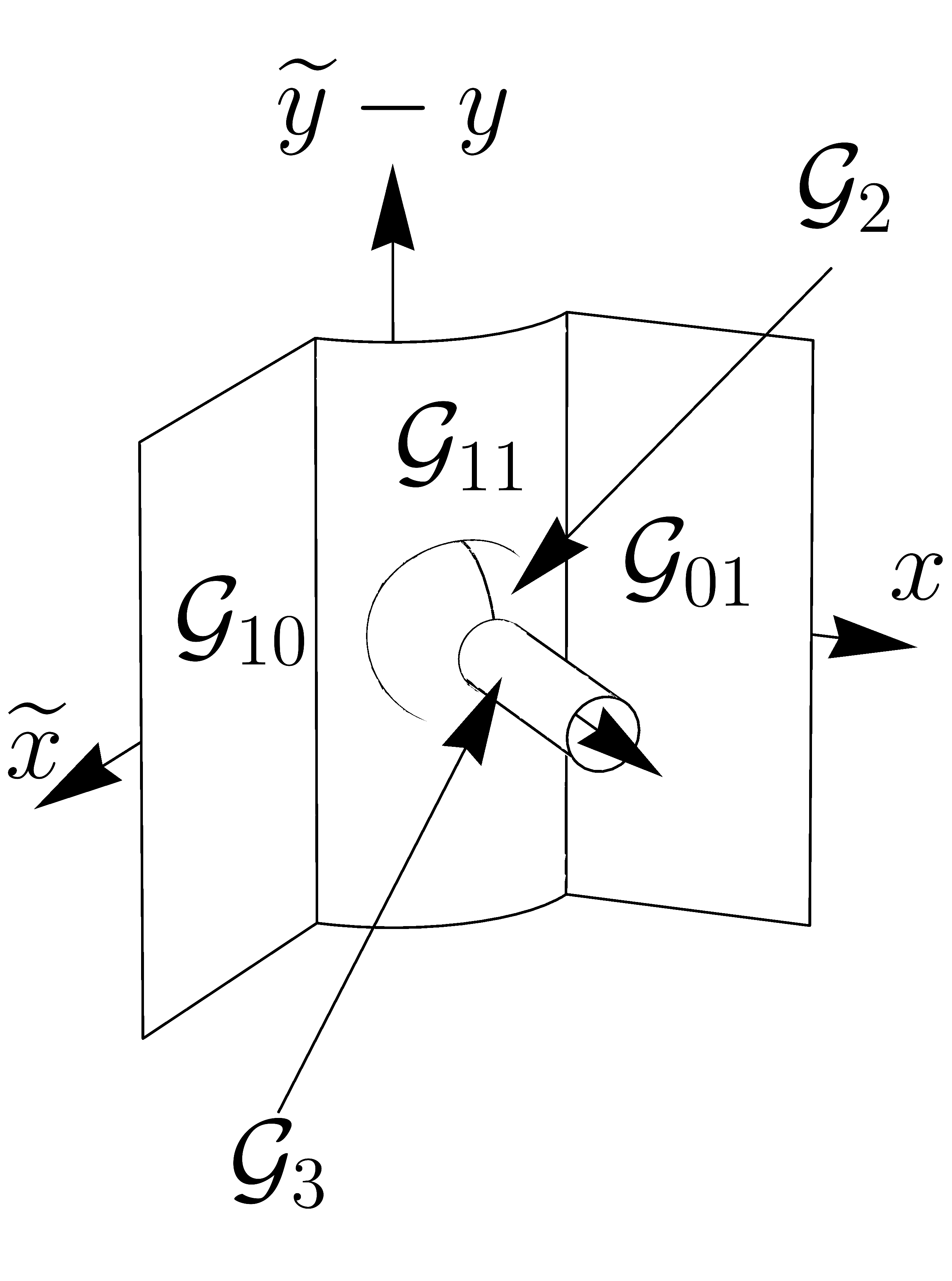}
      \caption{The modified scattering product space $\oX_{{\D}}^2$}
      \label{fig:blown_up}	
    \end{center}
      
\end{figure}

We next describe the coordinate systems we will use on $\oX^2_{\D}$.
Let $\dim \left(\oX\right)=n+1$ and $(x,y)$ and $(\td{x},\td{y})$ be two copies of the same  coordinate
system in a neighborhood $O$ of a point $p\in \p \oX$, so that 
$(x,y,\td{x},\td{y})$ is a coordinate system for $O^2\subset \oX^2$. 
Here and for the rest of this section $x$ (and thus also $\td{x}$) is a
boundary defining function for $\p \oX$. 
The projective coordinate systems $(s_1=\td{x}/x,x,y,\td{y})$ and
$(s_2=x/\td{x},\td{x},y,\td{y})$ are valid in a neighborhood of
$\calG_{01}$ and $\calG_{10}$ respectively and the coordinate functions are
smooth away from $\calG_{10}$ and $\calG_{01}$ respectively (though they
do not form coordinate systems near $\calG_2$ and $\calG_3$ in $\oX^2_{\D}$).  
In terms of the former coordinate system, $s_1$ is a defining function for
$\calG_{01}$ and $x$ a defining function for $\calG_{11}$, whereas in terms
of the latter $s_2$ is a defining function for $\calG_{10}$ and $\td{x}$ is
one for  $\calG_{11}$. 
On the other hand, either by checking directly or by using the
commutativity of the blow-up mentioned before,  one sees that a valid
coordinate system in a neighborhood of any point near 
$\calG_{11}\cap\calG_{2}$ can be obtained by appropriately
choosing $n$ of the ${\th}^j$ below,
\begin{equation}\label{eq:near_g2_cap_g11}
\left(\t=\sqrt{(s_1-1)^2+|\td{y}-y|^2},
{\th}=\frac{(s_1-1,\td{y}-y)}{\t},\s=\frac{x}{\t},y\right),
\end{equation}
where $|\cdot|$ denotes the Euclidean norm.
For instance, letting 
\begin{equation}\label{eq:U_j}
  U_J^\pm=\big\{({\th}^0,\dots,{\th}^n)\in \S^n:\pm{\th}^J>
1/\sqrt{2(n+1)}\big\},\quad  J=0,\dots, n
\end{equation}
we can cover $\S^n$ by the $U_J^\pm$ and use
${\th}^j$, $j\neq J$ as smooth coordinates on $U_\pm^J$ for each choice of
$\pm$. 
Now note that
$(X=(s_1-1)/x,Y=(\td{y}-y)/x,x,y)$ are valid smooth coordinates globally on
$(O_{\D}^2)^\circ$, and the coordinate functions are smooth up to $\calG_3$
and $\calG_2^\circ$. 
Thus one obtains a diffeomorphism $\calT $ from 
$(O^2_{{\D}})^\circ$ onto an open subset of $\o{\R^{n+1}_+}\times[0,\infty)\times\S^n$, extending
  to a smooth diffeomorphism  up to $\calG_3$ and $\calG_2^\circ$, by setting 
\begin{equation}
   \label{eq:polar_coordinates_near_front_face}
  \left({x},y,R=\sqrt{X^2+|Y|^2},{\th}=(\hat{X},\hat{Y})=\frac{(X,Y)}{R}\right)\in
  \o{\R^{n+1}_+}\times[0,\infty)\times\S^n.  
\end{equation}
Again we can choose coordinates on $\S^n$ to obtain smooth valid coordinate
systems on $(O_{\D}^2)^\circ$,  up to $\calG_3$ and $\calG_2^\circ$. 
Note that ${\th}=({\th}_0,\dots,{\th}_n)$ stands for the same functions 
in both \eqref{eq:near_g2_cap_g11} and
\eqref{eq:polar_coordinates_near_front_face} and that $R$ is a defining function for $\calG_3$. 
Moreover,
\begin{align}  %
x_{01}=&\frac{1+xR\hat{X}}{2+xR\hat{X}},\qquad %
  x_{10}=(2+xR\hat{X})^{-1},\qquad %
  x_{11}=\frac{(2+xR\hat{X})^2}{1+R}
  \label{eq:defining_functions}
\end{align}
are smooth defining functions for $\calG_{01}$, $\calG_{10}$ and
$\calG_{11}$ respectively, each  smooth up to all other boundary
hypersurfaces and non-vanishing there.

Via the diffeomorphism $\calT$, 
the expression $|dx\,dy\,dR\,d{\omega}|$ (where $d\omega$ is the volume
form on $\S^n$ induced by the round metric) pulls back to a smooth 
global section of the smooth density bundle on $(O_{{\D}}^2)^\circ$, which is
smooth and non-vanishing up to $\calG_3$ and $\calG_2^\circ$, but not up to the other boundary faces. 
The following can be shown via a straightforward computation in local
coordinates smooth up to the various boundary faces in different parts of
$O^2_{{\D\displaystyle}}$.

\begin{lemma}\label{lm:smooth_density}
Via the diffeomorphism $\calT$ defined by the
coordinates \eqref{eq:polar_coordinates_near_front_face}, the expression  
\begin{equation}\label{eq:density_scattering_product}
(R+1)^{-1}(2+xR\hat{X})^{n}|dx\, dy\, dR\,d{\omega}|
\end{equation}
pulls back to a smooth non-vanishing section of the smooth density bundle on $O^2_{{\D}}$, up to all boundary faces.
\end{lemma}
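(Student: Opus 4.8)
The plan is to check smoothness and non‑vanishing of the density in \eqref{eq:density_scattering_product} near each boundary hypersurface and corner of $O^2_{\D}$ separately, using the explicit defining functions \eqref{eq:defining_functions} and the fact, recorded just above the statement, that the pullback of $|dx\,dy\,dR\,d\omega|$ is smooth and non‑vanishing up to $\calG_3$ and $\calG_2^\circ$.

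The first step is the algebraic identity that trivializes the weight. Since $\hat X = X/R$ we have $R\hat X = X = (s_1-1)/x$ with $s_1 = \td x/x$, hence
\[
2 + xR\hat X \;=\; 1 + s_1 \;=\; \frac{\td x}{x} + 1 .
\]
Comparing with \eqref{eq:defining_functions} gives $2 + xR\hat X = x_{10}^{-1}$ and $(1+R)^{-1} = x_{10}^2 x_{11}$, so that the weight in \eqref{eq:density_scattering_product} equals $x_{10}^{2-n}x_{11}$. Thus it is enough to show that $x_{10}^{2-n}x_{11}\,|dx\,dy\,dR\,d\omega|$ is a smooth non‑vanishing density up to every boundary face of $O^2_{\D}$.

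Away from $\calG_{10}\cup\calG_{11}$ — in particular on a neighborhood of $\calG_3$ and of $\calG_2^\circ$ — the functions $x_{10}$ and $x_{11}$ are smooth and positive, so $x_{10}^{2-n}x_{11}$ is smooth and positive there and the claim follows at once from the known behavior of $|dx\,dy\,dR\,d\omega|$. It remains to treat neighborhoods of $\calG_{10}$, $\calG_{01}$, $\calG_{11}$ and of the corners $\calG_{10}\cap\calG_{11}$, $\calG_{01}\cap\calG_{11}$, $\calG_2\cap\calG_{11}$; together with the previous region these exhaust $O^2_{\D}$, since $\calG_2$ meets only $\calG_3$ and $\calG_{11}$ and each of $\calG_{10},\calG_{01}$ meets only $\calG_{11}$. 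On each of these regions I would pass to the corresponding projective chart of Section~\ref{sec:scattering_calculus}: the $(s_1,x,y,\td y)$‑chart near $\calG_{01}$ and the bulk of $\calG_{11}$, the $(s_2 = x/\td x,\td x,y,\td y)$‑chart near $\calG_{10}$, and the $(\t,\th,\s = x/\t,y)$‑chart near $\calG_2\cap\calG_{11}$. In each chart one rewrites $|dx\,dy\,dR\,d\omega| = R^{-n}|dx\,dy\,dX\,dY|$, converts $|dx\,dy\,dX\,dY|$ to the coordinate density by an elementary Jacobian (for instance $|dx\,dy\,dX\,dY| = x^{-(n+1)}|dx\,dy\,ds_1\,d\td y|$ in the $s_1$‑chart), and substitutes the chart expressions for $x_{10},x_{11}$. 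In the $s_1$‑chart this produces the density $\frac{(1+s_1)^n}{(x+\t)\,\t^n}\,|dx\,dy\,ds_1\,d\td y|$ with $\t := \sqrt{(s_1-1)^2+|\td y-y|^2}$, whose coefficient is manifestly smooth and positive for $x\ge 0$ small, $s_1\ge 0$ bounded, and $\t$ bounded below; the last condition holds near $\calG_{01}$ and near the bulk of $\calG_{11}$ because $\t$ vanishes only on the lifted diagonal, which has been blown up. The $s_2$‑chart is entirely parallel and gives $\frac{(1+s_2)^n}{(s_2^2\td x + \td\t)\,\td\t^n}\,|d\td x\,ds_2\,dy\,d\td y|$ with $\td\t := \sqrt{(1-s_2)^2 + s_2^2|\td y-y|^2}$, again with smooth positive coefficient near $\calG_{10}$ and $\calG_{10}\cap\calG_{11}$ since $\td\t\to 1$ as $s_2\to 0$. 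Finally, in the $(\t,\th,\s,y)$‑chart one computes $x = \s\t$, $R = 1/\s$, $\hat X = \th^0$, and the density becomes $\frac{(2+\t\,\th^0)^n\,w(\th)}{1+\s}\,|d\t\,d\s\,dy\,d\th'|$, where $d\omega = w(\th)\,|d\th'|$ with $w$ the smooth positive density of the round metric on $\S^n$ in the chosen angular coordinates $\th'$; this coefficient is smooth and positive up to $\{\t=0\}\cup\{\s=0\}$, which covers $\calG_2$, the remaining part of $\calG_{11}$, and the corner between them.

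I do not expect any real obstacle. The content is the observation $2 + xR\hat X = 1 + \td x/x$, which rewrites the weight as a product of the defining functions \eqref{eq:defining_functions}, followed by a short list of routine Jacobian computations in projective charts. The only points demanding care are (i) matching each chart with the faces and corners up to which it is valid, so that the charts jointly cover all of $O^2_{\D}$, and (ii) verifying that the auxiliary quantities $\t$ and $\td\t$ stay bounded below on the relevant chart domains — which is exactly the assertion that the diagonal locus $\{\t = 0\}$ has been blown away from those regions.
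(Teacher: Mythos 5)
Your proposal is correct and is precisely the ``straightforward computation in local coordinates smooth up to the various boundary faces'' that the paper asserts without writing out: the identity $2+xR\hat X = 1+s_1$ identifies the weight with $x_{10}^{2-n}x_{11}$ consistently with \eqref{eq:defining_functions}, and your chart-by-chart Jacobian computations (I checked the $s_1$-, $s_2$- and $(\t,\th,\s,y)$-chart expressions) together with the stated covering of the faces and corners give the full claim. The only cosmetic imprecision is the phrase ``$\t$ vanishes only on the lifted diagonal, which has been blown up'' --- after blow-up $\b_\D^*\t$ still vanishes, namely on $\calG_2\cup\calG_3$ --- but this is exactly the locus you exclude from the $s_1$-chart region and handle by the other two cases, so the argument is unaffected.
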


We now record the form that the lift
$\b_{{\D}}^* \td{W}$ takes in terms of
\eqref{eq:polar_coordinates_near_front_face} whenever $\td{W}\in \calV_{\Sc}(\oX)$
is identified with a vector field on $\oX^2$ acting on the left factor.
(The lift $\b_{{\D}}^* \td{W}$ is  well defined since
$\b_{\D}:(\oX^2_{\D})^\circ\to (\oX^2\setminus \Delta)^\circ$ is a diffeomorphism.) 
As before, we work in a neighborhood $O^2$ of a point $(p,p)\in \p \Delta$ 
 where we have coordinates $(x,y,\td{x},\td{y})$.
Then $\td{W}$ is spanned over $C^\infty$ by $x^2\p_x$, $x\p_{y^\a}$. 
Those lift via $\b_{1}$ to the vector fields 
$-x{s_1}\p_{s_1}+x^2\p_x$, $x\p_{y^\a}$ respectively, in coordinates
$(x,{s_1}=\td{x}/x,y,\td{y})$.
Now we lift those using $\b_2$ and find that in terms of
coordinates $(x,y,X,Y)$ they are given respectively by 
$(-1-2xX)\p_X-xY\cdot\p_Y+x^2\p_x$ and $-\p_{Y^\a}+x\p_{y^\a}$. 
Blowing up $\Delta_{\Sc}$ corresponds to using polar coordinates about $(X,Y)=0$.
Consider the sets
$U_J^\pm$,
$J=0,\dots, n$, in \eqref{eq:U_j}: 
on $U^\pm_J$ the functions ${\th}^j$, $j\neq J$, form a smooth coordinate system.
Then  for each $J$, choice of $\pm$, and $\a=1,\dots,n$,
there 
exist smooth functions $a_{J,\pm}^{j}, \,b_{J,\pm,\a}^{j}\in C^\infty(U_{J}^\pm)$ such that 
\begin{equation}
(\b_3)_* \big(\hat{X}\p_R+R^{-1}\sum_{j\neq J}
  a_{J,\pm}^{j}({\th})\p_{{\th}^{j}}\big)=\p_X\quad\text{and}\quad(\b_3)_*\big(\hat{Y}^\a\p_R+R^{-1}\sum_{j\neq
    J} b_{J,\pm,\a}^{j}({\th})\p_{{\th}^{j}}\big)=\p_{Y^\a}.   
\end{equation}
Thus  if $\td{W}\in \{x^2\p_x,x\p_y\}$ then in the set
$\{(x,y,R,{\th})\in O\times [0,\infty)\times U_{J}^\pm\}$  
 we have  $\b_{{\D}}^* \td{W}=\sum_j c^j_{J,\pm}(x,y,R,\th)\, W_j$, where $W_j$ belong to either of the two sets 
 \begin{equation}
 \calW_1= \{x^2\p_x,x\p_y,\p_R\} \quad \text{or}\quad \calW_2^J=\{R^{-1}\p_{{\th}^j},j\neq
 J\},\quad J=0,\dots, n \label{eq:W_j}
 \end{equation}
 and $c^j_{J,\pm}$ are smooth and  
 grow at most polynomially fast as $R\to \infty$.  Note also that
 $\b_{{\D}}^* \td{W}$ is smooth on $\oX_{\D}^2\setminus \calG_{3}$ and
 tangent to its boundary faces other than $\calG_3$.

\subsection{Analysis on blow-ups}
\label{ssec:analysis_on_blowups}

In this section we describe the Schwartz kernels of the operators
$A_{\chi,\eta,\s}$ defined in Section \ref{estimates} (in Lemma
\ref{lm:kernel_downstairs}) and prove two technical lemmas regarding their
regularity and dependence on the parameter $\eta$ when lifted to the
scattering stretched product space (Lemmas
\ref{lm:lift_of_kernel_general_connections} and
\ref{lm:derivative_of_lift}).  
We then use those to analyze the kernel of the difference $E_{\eta,\s}$ in
Lemma \ref{lm:kernel_difference} and finally its properties to prove
Proposition \ref{prop:mapping_diff}. 

Recall that the operators $A_{\chi,\eta,\s}$ act on 
functions supported in sets varying with the parameter $\eta$. 
As in \cite{Uhlmann2016}, it will be convenient to create an auxiliary
family of operators acting on functions defined on the same space for all
values of the parameters.  
We use the smooth one-parameter family of maps 
$\psi_\eta(\cdot)$, defined after Lemma \ref{lm:artificial_boundary} to map
diffeomorphically $\oX_\eta$ onto $\oX_0$ (locally near the boundaries).   
For $\s>0$, $\eta\geq 0$ and $\chi$ as in Section \ref{estimates} define a one-parameter family of operators by  
\begin{equation} \label{eq:kernel_A}
\td{A}_{\chi,\eta,\s}:=(\psi_{-\eta})^*\circ A_{\chi,\eta,\s}\circ (\psi_\eta)^*,
\end{equation} 
all acting on functions supported in $\oX_0$ near $p\in \p \oM_e$.
We use the notation $\td{\oA}_{\chi,\eta,\s}$ and $\td{\hA}_{\chi,\eta,\s}$
for the operators corresponding to $\n=\on$ and $\hn$.   
Similarly, for $\chi_0$ determined by Proposition \ref{prop:pdo} let 
\begin{equation}\label{eq:kernel_E}
    \td{E}_{\eta,\s}:=\td{\oA}_{\chi_0,\eta,\s}-\td{\hA}_{\chi_0,\eta,\s}.
\end{equation}
Proposition \ref{prop:mapping_diff} immediately reduces to showing the following:

\begin{manualproposition}{\ref{prop:mapping_diff}$\,\mathbf{'}$} 
\label{prop:mapping_diff_primed}
Let $\s>0$. 
Provided $O$ is a sufficiently small neighborhood of $p\in \p\oM_e$ in
$\oX_0$, for every $\d>0$  there exits $\eta_0>0$ with the property that if
$0\leq\eta< \eta_0$ one has   $\tU_\eta:=\psi_\eta(U_\eta) \subset O$ and 
\begin{equation}\label{eq:injectivity_estimate_2}
\|\td{E}_{\eta,\s}u\|_{H_{\Sc}^{1,0}(O)}
\leq \d  \|u\|_{L^2(\tU_\eta)},
\end{equation}
for all $u\in L^2(\tU_\eta)$ extended by 0 outside $\tU_\eta$.
\end{manualproposition}

We now  identify the Schwartz kernel $\k_{\td{A}_{\chi,\eta,\s}}$ of
$\td{A}_{\chi,\eta,\s}$.  It will be convenient to view it as a section of
the full smooth density bundle on $\oX_0^2$, which entails the choice of a
smooth positive density on the left $\oX_0$ factor.  This choice will not 
affect our analysis of the regularity properties of the kernel.   
We will use the the product decomposition $[0,\d_0)_{x_0}\times Y_p$ of a
  collar neighborhood of $Y_p$ in $\oX_0$  introduced in Section
  \ref{estimates} and the coordinates $y^\a$ on $Y_p$ such that the metric
  $g^0$ 
  is Euclidean in terms of $(x_0,y^1,\dots y^n)$.
Henceforth we will write $g$ for $g^0$ and $S\tM$ for its unit sphere bundle.  No
confusion will arise with the AH metric $g$, as it will not appear again.

\begin{lemma}\label{lm:kernel_downstairs}
Suppose $\n$ is a connection on $T\tM$ whose exponential map
$\exp:T\tM\to \tM$ is of class  $C^2$ and for which $\p \oM_{e}$ is
strictly convex. 
Also let $\chi\in C_c^\infty(\R)$ be even with $\chi(0)=1$, $\chi\geq 0$,
and let $\s>0$.
Then for $\eta$ sufficiently small and for $z=(x,y)$,
$\td{z}=(\td{x},\td{y})\in \oX_0$ in a sufficiently small neighborhood of
$p$, 
 we have
\begin{equation}
\begin{aligned}
  \k_{\td{A}_{\chi,\eta,\s}} =
  x^{-2}e^{-\s(1/x-1/\td{x})}2\chi\left(P(z,\td{z},\eta)\right)  
  \frac{|\det(d_{\td{z}}\,{\exp}_{z-\oeta}^{-1})(\td{z}-\oeta)|}{|{\exp}_{z-\o{\eta}}^{-1}(\td{z}-\oeta)|^n}|dzd\,\td{z}|,\\* 
  \label{eq:kernel_as_section} 
  \text{ where }
{P}(z,\td{z},\eta):=
\frac{d{x_0}\big({\exp}_{z-\o{\eta}}^{-1}(\td{z}-\o{\eta})\big)}{x\big|dy\big({\exp}_{z-\o{\eta}}^{-1}(\td{z}-\o{\eta})\big)\big|}\text{ 
  and }\o{\eta}=(\eta,0).\qquad  
\end{aligned}
\end{equation}
\end{lemma}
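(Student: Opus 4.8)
The plan is to compute the Schwartz kernel of $\td A_{\chi,\eta,\s}$ by unwinding the defining formulas \eqref{eq:Istar_chi_I}, \eqref{eq:conjugated_operator} and \eqref{eq:kernel_A}, and tracking how the averaging over $S^0_z\tM$ becomes, after pushing forward along the exponential map, an integral over the fiber $T_z\tM$ with the associated Jacobian factor. First I would note that by \eqref{eq:kernel_A} it suffices to compute the kernel of $A_{\chi,\eta,\s}$ on $\oX_\eta$ and then conjugate by $\psi_{\pm\eta}$; since $\psi_\eta$ acts in our coordinates simply as the translation $(x,y)\mapsto(x+\eta,y)$, this conjugation replaces $x_\eta$ by $x_0$ and shifts the spatial arguments by $\o\eta=(\eta,0)$, which is the origin of all the ``$z-\o\eta$'' and ``$\td z-\o\eta$'' in \eqref{eq:kernel_as_section}. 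So the real content is computing the kernel of $A_{\chi,\eta}$ from \eqref{eq:Istar_chi_I} and then applying $x_\eta^{-2}e^{-\s/x_\eta}(\cdot)e^{\s/x_\eta}$.

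For the kernel of $A_{\chi,\eta}$, I would start from
\[
A_{\chi,\eta}f(z)=\int_{S^0_z\tM}\chi\!\left(\frac{\l}{|\w|x}\right)\Big(\int_{\t_-}^{\t_+}f(\exp_z(\t v))\,d\t\Big)\,d\mu_{g^0}(v),
\]
and change variables in the combined $(\t,v)$-integral. Writing $w=\t v\in T_z\tM$, the map $(\t,v)\mapsto \t v$ from $(0,\infty)\times S^0_z\tM$ to $T_z\tM\setminus\{0\}$ has Jacobian $\t^n$ relative to $d\t\,d\mu_{g^0}$ versus Lebesgue measure on $T_z\tM$ (using that $g^0$ is Euclidean in our coordinates near $p$); equivalently $d\t\,d\mu_{g^0}=|w|^{-n}\,dw$. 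Hence
\[
A_{\chi,\eta}f(z)=\int_{T_z\tM}\chi\!\left(\frac{d x_\eta(w)}{x\,|dy(w)|}\right)f(\exp_z(w))\,|w|^{-n}\,dw,
\]
where I have rewritten $\l/|\w|$ in terms of $w$: if $v=\l\p_{x_\eta}+\w$ then for $w=\t v$ we have $\l\t=dx_\eta(w)$ and $|\w|\t=|dy(w)|$, so the argument of $\chi$ is $dx_\eta(w)/(x\,|dy(w)|)$, which is scale-invariant in $w$ as it must be. Now substitute $\td z=\exp_z(w)$, i.e. $w=\exp_z^{-1}(\td z)$; since $\exp$ is $C^2$ and a local diffeomorphism near the zero section (which holds on a neighborhood of $p$ by strict convexity and the $C^2$ hypothesis), the change of variables is legitimate with $dw=|\det(d_{\td z}\exp_z^{-1})|\,d\td z$. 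This produces exactly
\[
A_{\chi,\eta}f(z)=\int \chi\big(P_\eta(z,\td z)\big)\,\frac{|\det(d_{\td z}\exp_z^{-1})(\td z)|}{|\exp_z^{-1}(\td z)|^n}\,f(\td z)\,d\td z
\]
with $P_\eta(z,\td z)=dx_\eta\big(\exp_z^{-1}(\td z)\big)\big/\big(x\,|dy(\exp_z^{-1}(\td z))|\big)$, so the kernel of $A_{\chi,\eta}$ is the coefficient of $f(\td z)\,d\td z$ above.

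Finally I would apply the conjugation. The operator $A_{\chi,\eta,\s}=x_\eta^{-2}e^{-\s/x_\eta}A_{\chi,\eta}e^{\s/x_\eta}$ simply multiplies the kernel (viewed in the $z$ variable) by $x_\eta^{-2}e^{-\s/x_\eta}$ on the left and by $e^{\s/\td x_\eta}$ on the right, giving the factor $x_\eta^{-2}e^{-\s(1/x_\eta-1/\td x_\eta)}$. Passing to $\td A_{\chi,\eta,\s}$ via $\psi_{\pm\eta}$ replaces $x_\eta,\td x_\eta$ by $x_0=x$, $\td x_0=\td x$ and shifts every occurrence of the point arguments inside $\exp^{-1}$ and inside $dx_0, dy$ by $\o\eta$, since $\psi_\eta$ is the translation by $\o\eta$ and $\exp^{-1}$ transforms naturally; one also picks up the overall factor $2$ in \eqref{eq:kernel_as_section} from the choice of smooth density on the left factor (the ``$2$'' absorbing the passage between $x$ and $r=x^2$–type normalizations, or simply a bookkeeping constant from the density; I would spell out which smooth density makes the constant exactly $2$). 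The result is precisely \eqref{eq:kernel_as_section}. I expect the main obstacle to be purely notational bookkeeping: correctly tracking the $\psi_{\pm\eta}$-conjugation so that the shifts $z\mapsto z-\o\eta$, $\td z\mapsto \td z-\o\eta$ land in the right places and confirming the numerical constant, together with a careful check that the $(\t,v)\to w$ and $w\to\td z$ changes of variables are valid on the relevant small neighborhood — both of which are routine given $C^2$ regularity of $\exp$ and strict convexity, but must be stated with the right domains.
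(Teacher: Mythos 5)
Your overall strategy --- convert the $(\t,v)$ integral to a tangent-space integral in polar coordinates, push forward by $\exp_z$, then conjugate by the exponential weights and by $\psi_{\pm\eta}$ --- is exactly the paper's route. But there is a genuine error in the one step that produces the constant: your change of variables $(\t,v)\mapsto \t v$ is a bijection from $(0,\infty)\times S^0_z\tM$ onto $T_z\tM\setminus\{0\}$, whereas the inner integral in the definition of $A_{\chi,\eta}$ runs over $\t\in(\t_-,\t_+)$ with $\t_-\le 0\le\t_+$ (each geodesic is traversed in both directions). As written, your substitution discards the $\t<0$ half, so your displayed formula for $A_{\chi,\eta}f(z)$ is off by a factor of $2$; and your proposed explanation for the $2$ in \eqref{eq:kernel_as_section} --- a density normalization or an ``$r=x^2$'' bookkeeping constant --- is not correct. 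The kernel in \eqref{eq:kernel_as_section} is written against the coordinate density $|dz\,d\td{z}|$, and no such constant arises from the density.

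The actual source of the $2$, and the reason the lemma hypothesizes that $\chi$ is \emph{even}, is a symmetry you never invoke. Geodesics of a connection are reversible, so $\exp_z(\t(-v))=\exp_z(-\t v)$; hence the substitution $(\t,v)\mapsto(-\t,-v)$ carries the integrand to $\chi\big(-\l/(x|\w|)\big)f(\exp_z(\t v))$, which equals the original integrand precisely because $\chi$ is even. Therefore
\[
\int_{S^0_z\tM}\chi\Big(\frac{\l}{x|\w|}\Big)\int_{-\infty}^{\infty}f(\exp_z(\t v))\,d\t\,d\mu_{g^0}
=2\int_{S^0_z\tM}\chi\Big(\frac{\l}{x|\w|}\Big)\int_{0}^{\infty}f(\exp_z(\t v))\,d\t\,d\mu_{g^0},
\]
and only now does your polar change of variables on $(0,\infty)\times S^0_z\tM$ apply, yielding $2\int_{T_z\tM}\cdots|w|^{-n}\,dw$ and hence the factor $2$ in \eqref{eq:kernel_as_section}. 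This is exactly how the paper derives it. With this correction, the remainder of your argument --- the identification of the argument of $\chi$ with $dx_\eta(w)/(x|dy(w)|)$, the push-forward by $\exp_z$ with Jacobian $|\det d_{\td{z}}\exp_z^{-1}|$, the conjugation producing $x^{-2}e^{-\s(1/x-1/\td{x})}$, and the translation by $\oeta$ together with $dx_\eta=dx_0$ --- coincides with the paper's proof.
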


\begin{proof}
First examine the kernel $\k_{{A}_{\chi,\eta,\s}}$ of ${A}_{\chi,\eta,\s}$ on $\oX_\eta^2$, for fixed  $\eta\geq 0$ small.
Let $f$ be smooth and supported in a small neighborhood in $\oX_\eta$ of a point in  $U_\eta$.
We write $z'=(x',y)$, $\td{z}\,'=(\td{x}',\td{y})$ in terms of the product
decomposition $[0,\d_0)_{\x}\times Y_p$ on $\oX_\eta$ with $y$, $\td{y}$
  the coordinates on $Y_p$ as before, and also $v'=\l'\p_{x_\eta}+\omega'$
  for vectors in $T_{z'} \oX_\eta$. 
We assume throughout that $x',$ $|y|$ and $\eta$ are sufficiently small
that the conclusions of Lemma \ref{lm:geod_cond} are true for  all
geodesics entering the computation of ${A}_{\chi,\eta,\s}f({{z'}}) $ (see
the discussion following Lemma \ref{lm:geod_cond}). 
 Writing $d\l_g$ for the measure induced by $g$ on the fibers of $T\tM$, compute 
\begin{align}  
{A}_{\chi,\eta,\s}f({{z'}})=&{x'}^{-2}
e^{-\s/x'}\int_{S_{{{z'}}}\oX_\eta}\chi\!\left(\frac{\l'}{{x'}|\w'|}\right)\int_{-\infty}^\infty
(e^{\s/x_\eta}f)\big|_{\td{z}\,'={\exp}_{{{z'}}}(tv')}dt\,d{\mu}_{{g}}\\ 
  =&{x'}^{-2}
  e^{-\s/x'}\int_{S_{{{z'}}}\oX_\eta}2\chi\!\left(\frac{\l'}{{x'}|\w'|}\right)\int_{0}^\infty
  (e^{\s/x_\eta}f)\big|_{\td{z}\,'={\exp}_{{{z'}}}(tv')}dt\,d{\mu}_{{g}}\\%
  =&{x'}^{-2}
  e^{-\s/x'}\int_{T_{{{z'}}}\oX_\eta}2\chi\!\left(\frac{\l'}{{x}'|\w'|}\right)
  (e^{\s/x_\eta}f)\big|_{\td{z}\,'={\exp}_{{{z'}}}(v')}
  \frac{d{\l}_{{g}}}{|v'|^n}\\%
    =&{x'}^{-2} e^{-\s/x'}%
    \int_{\oX_\eta}2\chi\!\left(\frac{d{x}_\eta\!\left({\exp}_{{{z'}}}^{-1}({\td{z}\,'})\right)}{{{x'}}|dy\!\left({\exp}_{{{z'}}}^{-1}({\td{z}\,'})\right)|}\right)   
  \frac{e^{\s/\td{x}'}f({\td{z}\,'})}{|{\exp}_{{{z'}}}^{-1}({\td{z}\,'})|^n}(\exp_{{z'}})_*(d{\l}_{{g}}). 
  \label{eq:kernel_of_A}
\end{align}
By Lemma \ref{lm:geod_cond}  the two integrals with respect to $t$ above
are in fact over finite intervals $(-\d_1,\d_1)$ and $[0,\d_1)$,
  respectively. 
Moreover, $d\l_g(v')=\sqrt{ \det g(z')}|dv'|=|dv'|$ in terms of fiber coordinates, since $g$ is Euclidean near $p$.
Thus we can take 
\begin{equation}
  \k_{{A}_{\chi,\eta,\s}} =
  {x_\eta^{-2}(z')}e^{-\big(\frac{\s}{x_\eta(z')}-\frac{\s}{x_\eta(\td{z}')}\big)}
  2\chi\!\left(\frac{d{x_\eta}\!\left({\exp}_{z'}^{-1}(\td{z}\,')\right)}{x_\eta(z')\big|dy\!\left({\exp}_{z'}^{-1}(\td{z}\,')\right)\big|}\right)  
  \frac{|\det(d_{\td{z}\,'}\,{\exp}_{z'}^{-1})(\td{z}\,')|}{|{\exp}_{z'}^{-1}(\td{z}\,')|^n}|dz'd\,\td{z}\,'|.
\end{equation}
Conjugation by $\psi_{\eta}$ in \eqref{eq:kernel_A} corresponds to
replacing $(z',\td{z}\,')$ by $(z-\oeta,\td{z}-\oeta)$ in the Schwartz
kernel of $A_{\chi,\eta,\s}$, where $z$, $\td{z}$ are expressed in terms of
the product decomposition $[0,\d_0)_{x_0}\times Y_p$ on $\oX_0$. 
Noting that $dx_\eta=dx_0$ completes the proof.
\end{proof}

In the next two lemmas we use \eqref{eq:kernel_as_section} to analyze the
Schwartz kernel of $\td{A}_{\chi,\eta,\s}$ on $\big(\o 
{X}_0\big)^2_{{\D}}$ near $\b_{\D}^{-1}(p,p)$.
Since the proof of Proposition \ref{prop:mapping_diff} has been reduced to
showing Proposition \ref{prop:mapping_diff_primed}, 
from now on the entire analysis will be on $\oX_0$.
We will thus drop the subscript and write $\oX$ to mean $\oX_0$.
We write $z=(z^0,z^\a)=(x,y^\a)$ and
$\td{z}=(\td{z}\,^0,\td{z}\,^\a)=(\td{x},\td{y}^\a)$ for points in the left
and right factor of $\oX$ respectively with respect to the product
decomposition $[0,\d_0)_{x_0}\times Y_p$. 
Denote by $\nu$ a fixed smooth non-vanishing section of
$\Omega\big({\oX}^2_{{\D}}\big)$, the smooth density bundle on
$\o{X}^2_{{\D}}$; also recall the  notations $\calG_*$ introduced in
Section \ref{sec:scattering_calculus} for the various boundary faces of $\o 
{X}^2_{{\D}}$.
 In what follows, whenever we say that a function $f$ vanishes to infinite
 order at a collection $\{\calF_j\}_{j=1}^J$ of boundary hypersurfaces of
 a manifold with corners, we mean that if 
 $x_j$ is a defining function of $\calF_j$ then for any $(N_1,\dots,N_J)\in
 \mathbb{N}_0^J$ one has $\prod_{j=1}^J x_j^{-N_j}f\in L^\infty $ (thus
 this is purely a statement regarding 
 the growth of $f$ without any mention of the behavior of its derivatives near $\calF_j$).

\begin{lemma}\label{lm:lift_of_kernel_general_connections}
Let the hypotheses of Lemma \ref{lm:kernel_downstairs} hold.
For a sufficiently small neighborhood $O$ of $p$ in $\oX$ there exists
$\eta_0>0$ depending on $O$, $\n$ and $\chi$ such that 
\begin{equation}\label{eq:lift_of_kernel}
  \b_{\D}^*(\k_{\td{A}_{\chi,\eta,\s}})=K_{\n}(\cdot,\eta)\cdot\nu,\text{ where }
  K_\n(\cdot,\cdot)\in
  C^0\big(O_{\D}^2\times[0,\eta_0)\big).
\end{equation} 
 Moreover, $K_\n$ is $C^1$ away from $\calG_2\times [0,\eta_0)$ and ${\calG}_{\bd}^\cup\times [0,\eta_0)$,
 it vanishes to infinite order on $\calG_{\bd}\times
 [0,\eta_0)$,\footnote{With some abuse of notation, this means on
     $\calG\times [0,\eta_0)$ for $\calG\in \calG_{\bd}$.} and its
     restriction to $\calG_3\times [0,\eta_0)$ is independent of $\n$.  
\end{lemma}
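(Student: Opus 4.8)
The plan is to work from the explicit formula \eqref{eq:kernel_as_section} for $\k_{\td A_{\chi,\eta,\s}}$ and analyze, region by region, the lift under $\b_\D$ of each factor appearing there, checking joint continuity in $(z,\td z,\eta)$ lifted to $O^2_\D\times[0,\eta_0)$. The first preparatory step is to understand the behavior of $\exp^{-1}_{z-\oeta}(\td z-\oeta)$ near the diagonal: since $\exp$ is $C^2$ (and, for $\hn$, one degree better by Lemma~\ref{lm:regularity}), on the blown-up space the vector $\exp^{-1}_{z-\oeta}(\td z-\oeta)$, written in the coordinates $(x,y,R,\th)$ of \eqref{eq:polar_coordinates_near_front_face}, factors as $R$ times a vector whose direction approaches $\th$ (suitably identified) as $R\to 0$; this is the standard fact that the blow-down of the diagonal resolves the singularity of the inverse exponential map. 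Consequently $|\exp^{-1}_{z-\oeta}(\td z-\oeta)|$ is $R$ times a smooth positive factor, the Jacobian factor $|\det(d_{\td z}\exp^{-1}_{z-\oeta})(\td z-\oeta)|$ is a positive $C^1$ (resp. $C^0$) function, and the quotient $\det(d_{\td z}\exp^{-1})/|\exp^{-1}|^n$ lifts to $R^{-n}$ times a bounded factor. The density $|dz\,d\td z|$ lifts, by Lemma~\ref{lm:smooth_density} and \eqref{eq:defining_functions}, to $R^{n}$ times $\nu$ times a smooth nonvanishing factor, so the powers of $R$ cancel and one gets a bounded density coefficient near $\calG_3$ and $\calG_2^\circ$.

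The next step is the cutoff factor $\chi(P(z,\td z,\eta))$. Here I would show that near $\calG_2\cap\calG_{11}$ the argument $P$ behaves, up to bounded factors, like $\s=x/\t$ in the coordinates \eqref{eq:near_g2_cap_g11} — more precisely $P$ is comparable to $x$ divided by the length of the tangential part of the direction $\th$ — so that as one approaches $\calG_{10},\calG_{11},\calG_{01}$ (where $x\to 0$ in the relevant projective charts while $R\to\infty$), one is driven into the region where $\chi$ and all its derivatives vanish, provided $\chi$ has been chosen with small enough support; this is exactly the mechanism behind Lemma~\ref{lm:geod_cond} and the support localization already invoked in the proof of Lemma~\ref{lm:kernel_downstairs}. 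Combined with the exponential weight $e^{-\s(1/x-1/\td x)}$ — which is bounded by $1$ on the support since on the relevant set $x\le \td x$ after the $\psi_\eta$ conjugation, or more carefully, is controlled because $\chi$ already forces $x/\td x$ bounded — this yields the claimed infinite-order vanishing at $\calG_{\bd}\times[0,\eta_0)$: on a neighborhood of each $\calG\in\calG_{\bd}$ the kernel is a product of a bounded factor with $\chi^{(k)}$ of an argument bounded below by $\mathrm{const}/x_\calG$, hence vanishes faster than any power of the defining function.

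For the regularity claims, away from $\calG_2\times[0,\eta_0)$ and $\calG_{\bd}^\cup\times[0,\eta_0)$ one is in the region where $R$ stays bounded (near $\calG_3$) or bounded away from the bad faces, $x$ stays bounded away from $0$ in the needed charts, all the building-block functions ($\exp^{-1}$, its Jacobian, $P$) are $C^1$ in $(z,\td z)$ and $C^\infty$ in $\eta$, and the division by $|\exp^{-1}|^n=R^n(\text{positive})$ is harmless; hence $K_\n$ is $C^1$ there. Finally, for the restriction to $\calG_3\times[0,\eta_0)$: on $\calG_3$ one has $R=0$, i.e.\ $\td z=z$ to leading order, and the leading behavior of $\exp^{-1}_{z-\oeta}(\td z-\oeta)$, of its Jacobian (which $\to \mathrm{Id}$), and of $P$ depends only on the $1$-jet of $\exp$ along the zero section — i.e.\ only on $z$ and the direction $\th$, not on the Christoffel symbols of $\n$ — so the whole coefficient restricts to an expression ($\chi$ of a connection-independent argument, times $|\det|\to1$, times the $R^{-n}$ that cancels against the density) that is manifestly independent of $\n$. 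I expect the main obstacle to be the bookkeeping near the corner $\calG_2\cap\calG_{11}$ (and the triple intersections with $\calG_{\bd}$): one must verify that the $\chi$-argument really does have the asserted size in each of the overlapping projective/polar coordinate systems of \eqref{eq:near_g2_cap_g11}–\eqref{eq:defining_functions}, and that the powers of $R$ and of the various $x_{ij}$ coming from the Jacobian and from Lemma~\ref{lm:smooth_density} balance exactly; the continuity across $\calG_2$ up to (but not through) $\calG_{\bd}$, where the kernel is merely $C^0$ and not $C^1$, is the delicate point flagged in the statement and will require the careful coordinate analysis rather than a soft argument.
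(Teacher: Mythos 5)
Your plan follows essentially the same route as the paper's proof: lift the explicit kernel formula \eqref{eq:kernel_as_section} factor by factor to $O^2_\D$, use the resolution of $\exp^{-1}$ at the diagonal to control $|\exp^{-1}|^{-n}$ and $P$ against the density from Lemma \ref{lm:smooth_density}, obtain the decay at $\calG_{\bd}$ from the support of $\chi$ combined with the exponential weight (as in \cite[Proposition 3.3]{Uhlmann2016}), and read off the $\n$-independence at $\calG_3$ by evaluating at $R=0$. The one device you would need to make explicit is the paper's use of \emph{two} Taylor expansions of $\exp^{-1}_{z-\oeta}$ (first order with $C^1$ coefficients, second order with $C^0$ coefficients), which is how it circumvents the loss of regularity in Taylor remainders of the merely $C^2$ exponential map and justifies the precise $C^1$ versus $C^0$ claims at the various faces.
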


\begin{proof}

Throughout this proof we always assume that we are working in a small
enough neighborhood $O^2$ and with small enough $\eta_0$ that the
coordinates $(x,y,\td{x},\td{y})$ are valid, $g$ is Euclidean on $O$,
$\exp_{z-\oeta}^{-1}(\td{z}-\oeta)$ is a $C^2$ diffeomorphism onto its
image for $(z,\td{z})\in O^2$ and $0\leq \eta < \eta_0$, and the conclusion
of Lemma \ref{lm:geod_cond} holds for geodesics entering the computation of
$A_{\chi,\eta}$ for such $\eta$. 

Before we lift \eqref{eq:kernel_as_section} to  $\oX^2_{{\D}}$ to study its
regularity,  we analyze its various factors on $\oX^2$. 
The main difficulty in proving Lemmas
\ref{lm:lift_of_kernel_general_connections} and \ref{lm:derivative_of_lift}
is that whenever Taylor's Theorem is used to identify the leading order
behavior of a $C^k$ function at a point,  the remainder term is generally
not $C^{k}$. 
To circumvent this issue in our case, we use Taylor's Theorem for the function $t\mapsto
{\exp}_{z-\o{\eta}}^{-1}(z-\o{\eta}+t(\td{z}-z))$ to write two different
expressions for $\exp^{-1}_{z-\o{\eta}}(\td{z}-\o{\eta})$, each one of
which will be used in different parts of the argument: 
\begin{align}
dz^k({\exp}_{z-\oeta}^{-1}(\td{z}-\oeta))
=&{p}_j^k(z,\td{z},\eta)(\td{z}-z)^j\label{eq:taylor_first_order}\\* 
  =&(\td{z}-z)^k+{p}_{ij}^k(z,\td{z},\eta)(\td{z}-z)^i(\td{z}-z)^j,\quad
  \text{ where }\label{eq:Taylor_2nd_order} \\
{p}_j^k(z,\td{z},\eta):=&\int_0^1\p_{\td{z}^j}\big(dz^k{\exp}_{z-\o{\eta}}^{-1}\big)\big|_{{z}-\o{\eta}+\t(\td{z}-z)}d\t\in 
C^{1}\big(O^2\times [0,\eta_0)\big),\\  
{p}_{ij}^k(z,\td{z},\eta):=&
\int_0^1(1-\t)\p_{\td{z}^i\td{z}^j}\big(dz^k{\exp}_{z-\o{\eta}}^{-1}\big)\big|_{{z}-\o{\eta}+\t(\td{z}-z)}d\t\in
C^{0}\big(O^2\times [0,\eta_0)\big),  
\end{align}
{ with}
\begin{align}
\begin{tabular}{c c}
 $p^k_j(z,z,\eta)=\d_j^k$ \text{ and}&  ${p}_{ij}^k(z,z,\eta)=\frac{1}{2}{{\G}}_{ij}^k(z-\o{\eta})$.
\end{tabular} %
\end{align}
Here ${{\G}}_{ij}^k$ denote the connection coefficients of ${\n}$ in coordinates $(x,y)$. 
Now \eqref{eq:taylor_first_order} and \eqref{eq:Taylor_2nd_order} can be
used to show regularity of the factors of \eqref{eq:kernel_as_section}. 
By \eqref{eq:taylor_first_order}, 
\begin{equation}
  \begin{aligned}
  |{\exp}_{z-\o{\eta}}^{-1}&(\td{z}-\o{\eta})|^2={G}_{ij}(z,\td{z},\eta)(\td{z}-z)^i(\td{z}-z)^j,
  \text{ where } 
  	\;{G}_{ij}\in C^{1}\big(O^2\times [0,\eta_0)\big),\\*
  	\;
            &{G}_{ij}(z,{z},\eta)={\d}_{ij},\quad G_{ij} \text{
              positive definite in } O^2\times [0,\eta_0). \label{eq:distance}
  \end{aligned}
\end{equation}
To analyze $P$ from \eqref{eq:kernel_as_section} write, using
\eqref{eq:taylor_first_order} and \eqref{eq:Taylor_2nd_order},  
\begin{align}\label{eq:ratio1}
  {P}(z,\td{z},\eta)=&\dfrac{{p}_j^0(z,\td{z},\eta)(\td{z}-z)^j}{x\big({q}_{ij}(z,\td{z},\eta)(\td{z}-z)^i(\td{z}-z)^j\big)^{1/2}},\\    
  =&\dfrac{\td{x}-x+{p}_{ij}^0(z,\td{z},\eta)(\td{z}-z)^i(\td{z}-z)^j}{x\big(|\td{y}-y|^2+{q}_{ijk}(z,\td{z},\eta)(\td{z}-z)^i(\td{z}-z)^j(\td{z}-z)^k\big)^{1/2}},   \label{eq:ratio2}
\end{align}
where ${q}_{ij}={\d}_{\a\b}p^\a_ip^\b_j\in
C^{1}\big(O^2\times[0,\eta_0)\big)$, ${q}_{ijk}\in
  C^{0}\big(O^2\times[0,\eta_0)\big)$. 
We finally have
\begin{equation}\label{eq:Jacobian}
|\det(d_{\td{z}}\,{\exp}_{z-\o{\eta}}^{-1})(\td{z}-\o{\eta})|\in
C^1\big(O^2\times [0,\eta_0)\big), \quad
  |\det(d_{\td{z}}\,{\exp}_{z-\o{\eta}}^{-1})({z-\o{\eta}})|=1.
\end{equation} 
\smallskip

We now lift the various factors of the kernel.
As explained in Section \ref{sec:scattering_calculus}, near any point
in $(O_{{\D}}^2)^\circ$ we obtain a smooth coordinate system with a
suitable choice of $n$ of the $\th^j$ in
\eqref{eq:polar_coordinates_near_front_face}. 
Moreover, the functions $({x},y,R,{\th})$ are smooth up to
$\calG_2^\circ$ and $\calG_3$, and $x$, $R$ are defining
functions for $\calG_2$ and $\calG_3$ respectively. 
Since $\b_{\D}$ is smooth, \eqref{eq:Jacobian} implies that
\begin{equation}
	\b_{{\D}}^*\big(|\det(d_{\td{z}}\,{\exp}_{z-\o{\eta}}^{-1})(\td{z}-\o{\eta})|\big)\in
        C^1\big(O^{2}_{{\D}}\times[0,\eta_0)\big) 
\end{equation} 
and it is identically 1 at $\calG_2$ and $\calG_3$. 
Now write
$\hat{Z}=(x \hat{X},\hat{Y})$, so that $\td{z}-z=xR\hat{Z}$;  by \eqref{eq:distance},
\begin{align}
\b_{{{\D}}}^*|{\exp}_{z-\oeta}^{-1}(\td{z}-\oeta)|^{-n}=&
x^{-n}R^{-n}\big({G}_{ij}(z,z+xR\hat{Z},\eta)\hat{Z}^i\hat{Z}^j\big)^{-n/2}\\ 
=&
x^{-n}R^{-n}\big({G}_{\a\b}\hat{Y}^\a\hat{Y}^\b+2x{G}_{0\b}\hat{X}\hat{Y}^\b+x^2{G}_{00}\hat{X}^2\big)^{-n/2}.\label{eq:diagonal}   
\end{align}
Next pull back $\chi(P)$, writing it in two ways using
\eqref{eq:ratio1} and \eqref{eq:ratio2}: 
\begin{align}
  \b_{{{\D}}}^*(\chi_{}({P}))=&\chi\!\left(\frac{{p}^0_{j}(z,z+xR\hat{Z},\eta)\hat{Z}^j}
  {x\big(q_{ij}(z,z+xR\hat{Z},\eta)\hat{Z}^i\hat{Z}^j\big)^{1/2}}\right)\label{eq:lift_c1_version}\\%
    =&\chi\!\left(\dfrac{\hat{X}+R
      \;{p}_{\a\b}^0{}\hat{Y}^\a\hat{Y}^\b+xR\big(\;2{p}_{0\b}^0{}\hat{X}\hat{Y}^\b+x{p}_{00}^0{}\hat{X}^2\big)}{\big(|\hat{Y}|^2+x  
      R\;{q}_{ijk}{}\hat{Z}^i\hat{Z}^j\hat{Z}^k\big)^{1/2}}\right)\label{eq:lift_polar}, 
\end{align}
where in \eqref{eq:lift_polar} the $p^k_{ij}$ and $q_{ijk}$ are all evaluated at $(z,z+xR\hat{Z},\eta)$.
Some caution is required when the denominator of $P$ approaches 0.
Near any point in $(O_{\D}^2)^\circ\times [0,\eta_0)$  the expression 
$\b_{{{\D}}}^*(\chi_{}({P}))$ is $C^2$. This is because any such point projects
via $\b_{{\D}}$ to a pair of points away from the diagonal, which implies
that if the denominator of $P$ in \eqref{eq:kernel_as_section} vanishes the
numerator does not. Thus $\chi(P)=0$ there,  since $\chi$ is compactly
supported.  Now suppose we are given
$q'=(x',y',R',{\th}',\eta')\in(\calG_2^\circ\cup\calG_3)\times 
[0,\eta_0)$, so either $x'=0$ or $R'=0$. 
Since $|{\th}|=|(\hat{X},\hat{Y})|=1$, either $\hat{X} $ or $|\hat{Y}|$ are bounded away from 0.
If $|\hat{Y}|\leq \e$ for some $\e>0$, the numerator
of $P$  is bounded below in absolute value by ${\sqrt{1-\e^2}-CR(\e+x)}$, therefore if $\e$
is small enough  the numerator is bounded
below by a positive constant in a sufficiently small neighborhood of $q'$. 
This again implies that $\chi(P)$ is continuous at $q'$ in this case.
On the other hand, if $|\hat{Y}|\geq \e$ then in a neighborhood of $q'$
the denominator is bounded away from 0. 
We conclude that $\b_{{{\D}}}^*(\chi_{}({P}))$ extends continuously to
$(O_{{\D}}^2\setminus {\calG}_b^\cup)\times [0,\eta_0)$ and, in fact, it is
  $C^1$ away from $\calG_2\times [0,\eta_0)$ and ${\calG}_{\bd}^\cup\times
    [0,\eta_0)$  
 due to \eqref{eq:lift_c1_version}.
A similar analysis applies to show that $R^nx^n
\b_{{{\D}}}^*\big(|{\exp}_{z-\oeta}^{-1}(\td{z}-\oeta)|^{-n}\big)\in
C^1\big((O_{{\D}}^2\setminus{\calG}_{\bd}^\cup)\times
[0,\eta_0)\big)$ in the support of $\b_{{{\D}}}^*(\chi_{}({P}))$.

Next we have 
\begin{equation}
\b_{{{\D}}}^*(x^{-2}e^{-\s/x+\s/\td{x}})=x^{-2}e^{-\s\frac{R\hat{X}}{1+xR\hat{X}}}
\quad\text {and}\quad
\b_{{{\D}}}^*|dx\,dy\,d\td{x}\,d\td{y}|=x^{n+2}R^{n}|dx\,dy\,dR
\,d{\omega}|, 
\end{equation}
so upon combining the lifts of the factors in \eqref{eq:kernel_as_section}
and using Lemma \ref{lm:smooth_density} we find that
$\b_{\D}^*(\k_{\td{A}^\n_{\chi,\eta,\s}})= K_{\n}\cdot\nu$, where, up to  a
smooth non-vanishing multiple depending on $\nu$, 
\begin{equation}
  \begin{aligned}
 K_{\n}=2
     e^{-\frac{\s R\hat{X}}{1+xR\hat{X}}}\chi_{}\big(P(z,z+xR\hat{Z},\eta)\big)  %
  \frac{\big|\det(d_{\td{z}}\,{\exp}_{z-\oeta}^{-1})(z-\oeta+xR\hat{Z})\big|}{\big({G}_{ij}(z,z+xR\hat{Z},\eta)\hat{Z}^i\hat{Z}^j\big)^{n/2}} 
  \frac{(R+1)}{(2+xR\hat{X})^{n}}. \label{eq:full_expression_for_lift_of_kernel}
  \end{aligned}
\end{equation}
By our analysis of the various factors we conclude that
$K_\n$ is $C^1$ on $O_{\D}^2$ away from $\calG_2\times [0,\eta_0)$ and ${\calG}_b^\cup\times [0,\eta_0)$,
 and continuous up to $\calG_2^\circ\times [0,\eta_0)$.
Thus Taylor's theorem in terms of $R$ applies for $x>0$; we find that for $R\geq0$ small and $x>0$ 
 \begin{equation}\label{eq:kernel_r=0}
 	K_\n=2^{-n+1}\chi\!\left(\frac{\hat{X}}{|\hat{Y}|}\right)
  |\hat{Z}|^{-n}+R \,  \Lambda_\n(x,y,R,{\th},\eta), 
 \end{equation}
 where $\Lambda_\n$ is continuous in all of its arguments, up to $x=0$:
 observe that by \eqref{eq:lift_c1_version}  
 $\b_{{{\D}}}^*(\chi_{}({P}))=\chi\!\left(\frac{a_1(z,xR\hat{Z},\eta,\hat{Z})}{x\;(a_2(z,xR\hat{Z},\eta,\hat{Z}))^{1/2}}\right)$ 
 with $a_j$ both $C^1$ in their arguments, so upon taking an $R$-derivative
 the chain rule generates a factor of $x$ which cancels the one in the
 denominator of the argument of $\chi.$ 
From \eqref{eq:kernel_r=0} we conclude $K_\n\big|_{\calG_3}$ is indeed independent of $\n$.

Finally the vanishing of $K_\n $ to infinite order on $\calG_{\bd}\times
[0,\eta_0)$ follows as in the proof of \cite[Proposition 3.3]{Uhlmann2016}
  (also see \cite[p.45]{EptaminitakisNikolaos2020Gxto}), where it is shown
  that $e^{-\frac{\s R\hat{X}}{1+xR\hat{X}}}\chi_{}\big(P\big)$ decays
  exponentially (or vanishes identically) as $R\to \infty$, and upon taking
  into account that all other factors of the kernel grow at most
  polynomially fast as $R\to \infty$, uniformly in $\eta$. 
\end{proof}

\begin{lemma}\label{lm:derivative_of_lift}
Let the hypotheses and notations of Lemma \ref{lm:lift_of_kernel_general_connections} be in effect.
Also let ${W}$ be the lift to $\oX^2_{{\D}}$ of a vector field in
$\calV_{\Sc}(\oX)$ acting on the left factor of $\oX^2$ and  $x_3$ be  a
defining function for $\calG_3$, smooth and non-vanishing on
$\oX^2_{{\D}}\setminus \calG_3$.  
 Then for any sufficiently small neighborhood $O$ of $p$ in $\oX$ there exists $\eta_0>0$ such that 
\begin{equation}
	x_3 W ( K_{\n})=K_{\n,W}(\cdot,\cdot)\in
        C^0\big(O_{\D}^2\times
        [0,\eta_0)\big),\label{eq:continuity of derivative of kernel} 
\end{equation} vanishing to infinite order on $\calG_{\bd}\times[0,\eta_0)$.
Moreover, in terms of a product decomposition $\calG_3\times
[0,\e)_{x_3}\times [0,\eta_0)_\eta$ for $\oX_{\D}^2\times [0,\eta_0)_\eta$
      near $\calG_3\times [0,\eta_0)_\eta$  one has 
\begin{equation}\label{eq:lemma_kernel_expression}
	x_3W ( K_{\n})=\k_{W}(q,\eta)+x_3\k_{\n,W}(q,x_3,\eta),\quad q\in \calG_3
\end{equation}
where $\k_{W}\in C^0\big(\calG_3\times [0,\eta_0)\big)$ is independent
  of $\n$ and $\k_{\n,W}\in C^0\big(\calG_3\times [0,\e)\times[0,\eta_0)\big)$.
\end{lemma}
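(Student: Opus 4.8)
The plan is to differentiate the explicit expression \eqref{eq:full_expression_for_lift_of_kernel} for $K_\n$ using the structure of $\b_{\D}^*\td W$ worked out at the end of Section~\ref{sec:scattering_calculus}. Recall that in the coordinates \eqref{eq:polar_coordinates_near_front_face} near $\calG_3$ one has $\b_{\D}^*\td W = \sum_j c^j(x,y,R,\th)\,W_j$ with $W_j\in\calW_1\cup\calW_2^J = \{x^2\p_x, x\p_y, \p_R\}\cup\{R^{-1}\p_{\th^j}\}$ and $c^j$ smooth growing at most polynomially in $R$. Since $R$ is a defining function for $\calG_3$ we may take $x_3 = R/(1+R)$ near $\calG_3$ (and extend it arbitrarily elsewhere as a defining function for $\calG_3$), so $x_3 W$ is, up to smooth bounded factors, a combination of $x^2\p_x$, $x\p_y$, $R\p_R$, and $\p_{\th^j}$ — all of which, applied to a function that is $C^1$ in $(x,y,R,\th,\eta)$ up to $\calG_2^\circ\cup\calG_3$, again produce continuous functions. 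So the first step is to verify that $K_\n$, and hence $x_3 W K_\n$, is $C^0$ up to $\calG_3\times[0,\eta_0)$ (and $C^1$ away from the bad faces, which we do not need here). This follows from Lemma~\ref{lm:lift_of_kernel_general_connections}: each factor of \eqref{eq:full_expression_for_lift_of_kernel} was shown there to be $C^1$ in all arguments up to $\calG_3$ — the Jacobian factor, the $(G_{ij}\hat Z^i\hat Z^j)^{-n/2}$ factor (which in the support of $\chi(P)$ is $C^1$ by the argument with \eqref{eq:diagonal} near $\calG_3$ since $R|\hat Y|^2 + \dots$ stays bounded below there — actually, near $\calG_3$ we have $R$ small so $G_{\a\b}\hat Y^\a\hat Y^\b$ dominates only when $|\hat Y|$ bounded below, but $\chi(P)$ forces $|\hat Y|$ bounded below, cf. \eqref{eq:lift_polar}), the exponential factor, and the rational prefactor $(R+1)(2+xR\hat X)^{-n}$. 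The key subtlety, already handled in the proof of the previous lemma, is that differentiating $\b_{\D}^*\chi(P)$ via \eqref{eq:lift_c1_version} in $x$ or $R$ generates a factor of $x$ that cancels the $x^{-1}$ in the argument of $\chi$; so $x_3 W K_\n$ is continuous up to $\calG_3$. The infinite-order vanishing on $\calG_{\bd}\times[0,\eta_0)$ is inherited from that of $K_\n$ together with the at-most-polynomial growth in $R$ of the coefficients $c^j$ and of $x_3 W$ applied to the polynomially-controlled factors, exactly as in the last paragraph of the proof of Lemma~\ref{lm:lift_of_kernel_general_connections} (via \cite[Proposition 3.3]{Uhlmann2016}).

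For the expansion \eqref{eq:lemma_kernel_expression}, the idea is to use the Taylor expansion of $K_\n$ in $R$ already recorded in \eqref{eq:kernel_r=0}: for $x>0$ (and near $\calG_3$, i.e. $R$ small) we have $K_\n = 2^{-n+1}\chi(\hat X/|\hat Y|)|\hat Z|^{-n} + R\,\Lambda_\n(x,y,R,\th,\eta)$ with $\Lambda_\n$ continuous up to $x=0$, and crucially the leading term $2^{-n+1}\chi(\hat X/|\hat Y|)|\hat Z|^{-n}$ is independent of $\n$ and of $\eta$ (and, restricted to $\calG_3$ where $x=0$, it equals $K_\n|_{\calG_3}$). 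Now I would apply $x_3 W$ to this decomposition. Choosing the product decomposition $\calG_3\times[0,\e)_{x_3}\times[0,\eta_0)_\eta$ compatibly with the coordinates — so that $R$ and $x_3$ are comparable near $\calG_3$ — one sees that $W$ applied to the leading term, after multiplying by $x_3\sim R$, produces a term that restricts on $\calG_3$ ($R=0$, $x=0$) to something independent of $\n$: this will be $\k_W$. More precisely, write $x_3 W K_\n = x_3 W\big(2^{-n+1}\chi(\hat X/|\hat Y|)|\hat Z|^{-n}\big) + x_3 W\big(R\,\Lambda_\n\big)$. For the first piece, note $|\hat Z|^{-n} = (x^2\hat X^2 + |\hat Y|^2)^{-n/2}$ and $\chi(\hat X/|\hat Y|)$ depends only on $\th$, so this piece is $C^1$ up to $x=0, R=0$ and its $x_3 W$ is continuous with a well-defined restriction to $\calG_3$ independent of $\n$; the difference between its value and its restriction is $O(x_3)$, contributing to $x_3\k_{\n,W}$. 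For the second piece, $x_3 W(R\Lambda_\n)$: expanding $W = \sum c^j W_j$, each $W_j$ applied to $R\Lambda_\n$ is either $R\cdot(W_j\Lambda_\n) + (W_j R)\Lambda_\n$; since $W_j R$ is either $0$ (for $W_j\in\{x^2\p_x, x\p_y\}$), or $1$ (for $W_j=\p_R$, but then we also carry the compensating $x$-cancellation in $\chi$), or $R^{-1}\p_{\th^j}R = 0$, and since $x_3\sim R$ near $\calG_3$, every term carries an overall factor of $x_3$ and thus contributes only to $x_3\k_{\n,W}$. Collecting: $\k_W(q,\eta) := \big(x_3 W K_\n\big)\big|_{\calG_3}$ is independent of $\n$ by the above, and $\k_{\n,W} := x_3^{-1}\big(x_3 W K_\n - \k_W\big)$ is continuous up to $\calG_3\times\{0\}\times[0,\eta_0)$ by Taylor's theorem applied to the $C^1$-in-$x_3$ difference.

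The main obstacle, as in the companion Lemma~\ref{lm:lift_of_kernel_general_connections}, is that $K_\n$ is only $C^1$ (not $C^2$) up to $\calG_3$ because the connection $\hn$ is only $C^1$, so one cannot naively Taylor-expand $x_3 W K_\n$ to second order in $x_3$; one must instead extract the leading ($\n$-independent) term by hand from \eqref{eq:kernel_r=0} and control the remainder using only the $C^1$ regularity. The technical care goes into checking that every application of $W_j\in\calW_1\cup\calW_2^J$ to the factors of \eqref{eq:full_expression_for_lift_of_kernel} preserves continuity up to $\calG_3$ — in particular re-running the $x$-cancellation argument for $\b_{\D}^*\chi(P)$ from the previous proof when $W_j$ hits the argument of $\chi$, and checking that $W_j$ applied to $(G_{ij}\hat Z^i\hat Z^j)^{-n/2}$ stays continuous in the support of $\chi(P)$ (where the base of the power is bounded below). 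None of this requires the behavior of derivatives at $\calG_2$ or $\calG_{\bd}$, which is why the statement only claims continuity up to $\calG_3$ and infinite-order vanishing (a growth statement) at $\calG_{\bd}$; this keeps the argument within the regularity the $C^1$ connection affords. Finally, all estimates are uniform in $\eta\in[0,\eta_0)$ because $\hn$ depends smoothly on no parameter and the $\eta$-dependence enters only through the smooth translation $z\mapsto z-\oeta$ in the already-$C^1$ factors $p^k_j$, $p^k_{ij}$, $G_{ij}$, $q_{ij}$, $q_{ijk}$, all of which were recorded as jointly continuous (resp. $C^1$) on $O^2\times[0,\eta_0)$ in Lemma~\ref{lm:lift_of_kernel_general_connections}.
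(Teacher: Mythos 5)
There is a genuine gap, and it sits exactly at the two points the paper identifies as the hard part of this lemma. First, your continuity argument treats all of $x^2\p_x$, $x\p_y$, $R\p_R$, $\p_{\th^j}$ on the same footing, asserting that each produces a continuous function when applied to the $C^1$ factors of \eqref{eq:full_expression_for_lift_of_kernel}. But Lemma \ref{lm:lift_of_kernel_general_connections} only gives $C^1$ regularity of $K_\n$ \emph{away from} $\calG_2$, and the region near $\calG_3$ includes the corner $\calG_3\cap\calG_2$ where $x=0$. Your chain-rule remark (``differentiating in $x$ or $R$ generates a factor of $x$ that cancels the $x^{-1}$'') is correct for the $\calW_1$ fields, because the argument of $\chi$ in \eqref{eq:lift_c1_version} depends on $x$ and $R$ only through $xR\hat Z$ and the explicit prefactor $x^{-1}$. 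It fails for $\p_{\th^j}$: the argument also depends on $\th$ through the slot $v=\hat Z$ of $a_1/(x\,a_2^{1/2})$, and differentiating that slot produces an uncancelled $x^{-1}\p_v(a_1/a_2^{1/2})\cdot\p_{\th^j}\hat Z$ as in \eqref{eq:near_diagonal}. Killing that singularity requires the specific structural computation of the paper: Taylor expansion at $u=0$, the explicit value of $\p_{v}(a_1/a_2^{1/2})\big|_{u=0}$, and the fact that $\p_{\th^j}\hat Z^{0}=x\,\p_{\th^j}\hat X$ carries a factor of $x$ (see \eqref{eq:cases} and \eqref{eq:second_factor}). This is also where the nonzero, $\n$-independent leading coefficient $\k_W$ comes from; your proposal would incorrectly suggest $\k_W$ is just the naive restriction of an already-continuous object, whereas for $W\in\calW_2^J$ the continuity itself is the content.

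Second, your derivation of \eqref{eq:lemma_kernel_expression} applies $x_3W$ to the Taylor decomposition \eqref{eq:kernel_r=0} and in particular needs $W_j\Lambda_\n$. But $\Lambda_\n$ is only \emph{continuous} (it is the remainder of a first-order expansion of a function that is merely $C^1$), so $W_j\Lambda_\n$ is not defined, and for $W_j=R^{-1}\p_{\th^j}$ the offending term $R\cdot W_j(R\Lambda_\n)$ contains $R\,\p_{\th^j}\Lambda_\n$ with no vanishing factor to absorb it into $x_3\k_{\n,W}$. This is precisely the pitfall the paper flags at the start of the proof of Lemma \ref{lm:lift_of_kernel_general_connections}: one cannot differentiate Taylor remainders of $C^1$ functions. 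The paper's route is the reverse of yours: differentiate the product \eqref{eq:full_expression_for_lift_of_kernel} factor by factor first (using the product rule and \eqref{eq:first_factor}--\eqref{eq:third_factor}), and only then expand each differentiated factor at $R=0$ to isolate the $\n$-independent leading term. Your treatment of the $\calW_1$ case and of the infinite-order vanishing at $\calG_{\bd}$ is essentially correct and matches the paper, but the $\calW_2^J$ case---which is the substance of the lemma---needs the explicit cancellation argument you have skipped.
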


\begin{remark}
	The kernel $K_\n$ is well defined only up to a non-vanishing smooth
        multiple, since there isn't a canonical 
        non-vanishing smooth density on $\oX^2_{\D}$. 
	However, by  the comments at the end of Section
        \ref{sec:scattering_calculus}, $x_3 W$ is smooth  on $\oX_{\D}^2$,
        hence by Lemma \ref{lm:lift_of_kernel_general_connections} and
        \eqref{eq:kernel_r=0} it follows that multiplying $K_\n$ by a
        function smooth on $\oX_{\D}^2$ does not affect the result. 

\end{remark}

\begin{remark}
	The fact that the leading order term of $x_3W ( K_{\n})$ at
        $\calG_3\times [0,\eta_0)$ in \eqref{eq:lemma_kernel_expression} is
          independent of $\n$ is expected, since that was the case for
          $K_{\n}$, and $x_3 W$ is tangent to $\calG_3$. 
\end{remark}

\begin{proof}

Recall the diffeomorphism $\calT$ from Section
\ref{sec:scattering_calculus} defined on $O_{\D}^2\setminus
\calG_{\bd}^\cup$ for a small neighborhood $O$ of $p$, 
and  let 
$\calU_J^{\pm}:=\calT^{-1} \big(\o{\R_{+}^{n+1}}\times 
[0,\infty)\times U_J^\pm\big)\times [0,\eta_0)$ for $\eta_0>0$ ,with $U_J^\pm$ as in \eqref{eq:U_j}. 
Then $\bigcup_{J,\pm}\calU_J^{\pm}$  covers $(O^2_{\D}\setminus \calG_{\bd}^\cup)\times [0,\eta_0)$,
 and in each of the $\calU_J^{\pm}$ we have valid coordinates $(x,y,R,\th^j,\eta)$, $j\neq J$.
By the remarks at the end of Section \ref{sec:scattering_calculus}, it
suffices to show the claim on $\calU_J^{\pm}$ for $J=0,\dots,n$ assuming
that $W=W_1, W_2$, where 
$W_1\in \calW_1$, $W_2\in\calW_2^J$ (see \eqref{eq:W_j}),
and 
use a partition of unity subordinate to the cover $\{\calU^\pm_J\}_{J,\pm}$ to obtain the statement for general $W$.

We will use the expression \eqref{eq:full_expression_for_lift_of_kernel} we
computed for $K_\n$ in Lemma \ref{lm:lift_of_kernel_general_connections}. 
Suppose first that  $W_1\in\calW_1$: then $W_1$ is smooth for $x\geq 0$ and we
will show continuity of $W_1K_\n$ up to $x=0$ (so in this case the leading
order term at $\calG_3$ in \eqref{eq:lemma_kernel_expression} vanishes).  
Recall the notation $\hat{Z}=(x\hat{X},\hat{Y})$ and observe that 
\begin{equation}\label{eq:smooth_terms}
 {
e^{-\s\frac{R\hat{X}}{1+xR\hat{X}}}\big|\det(d_{\td{z}}\,{\exp}_{z-\oeta}^{-1})(z-\oeta+xR\hat{Z})\big|}\frac{(R+1)}{(2+xR\hat{X})^{n}}=\Lambda_0(z,R,R{\th},\eta),  
\end{equation}
with $\Lambda_0(z,R,v,\eta)\in C^1\big(O\times
  [0,\infty)\times\R^{n+1}\times[0,\eta_0)\big)$ for some small neighborhood $O$ of $p$ and small $\eta_0>0$. 
Therefore, $W_1\Lambda_0$ is continuous on the same space. 
Note that $\Lambda_0(z,0,0,\eta)$ is independent of $\n$.
Using \eqref{eq:lift_c1_version}, we see that $W_1\big(\chi_{}\big(
P(z,z+xR\hat{Z},\eta)\big)\big)$ is continuous up to $\calG_2^\circ$ and
$\calG_3$ and, 
similarly to the proof of Lemma \ref{lm:lift_of_kernel_general_connections},
$W_1\big({G}_{ij}(z,z+xR\hat{Z},\eta)\hat{Z}^i\hat{Z}^j\big)^{-n/2}$ is
continuous in the support of $\chi_{}\big(P\big)$ 
and $\chi'\big(P\big)$.
Since $K_\n=\Lambda_0 \chi(P) (G_{ij}\hat{Z}^i\hat{Z}^j)^{-n/2}$,
the product rule implies the continuity of $W_1\, K_\n$ away from
$\calG_{\bd}^\cup$. 
Again by the proof of \cite[Proposition 3.3]{Uhlmann2016}, 
 $W_1K_\n$ vanishes to infinite order at $\calG_{\bd}$ uniformly in $\eta $, and thus
$W_1K_\n\in C^0\big(O_{{\D}}^2\times[0,\eta_0)\big)$.
Upon multiplying by $x_3$ throughout, we obtain the claim for $W_1\in \calW_1$, with $\k_{W_1}\equiv0$ in
\eqref{eq:lemma_kernel_expression}.

Now fix a $J$ and suppose  $W_2\in \calW_2^J$, so that $ W_2=R^{-1}\p_{\th^j}$ for some $j\neq J$.
We will analyze $W_2 K_{\n}$, again looking away from  $\calG_{\bd}^\cup$ first.
By \eqref{eq:smooth_terms} and the chain rule we have that 
\begin{equation}\label{eq:first_factor}
\p_{{\th}^j}\Lambda_0=R\sum_{m=0}^n\p_{v^m}\Lambda_0\,\p_{{\th}^j}\th^m	
\end{equation}
is continuous up to $x=0$ on $\calU_J^{\pm}$.

For $\p_{{\th}^j}(\b_{{{\D}}}^*(\chi_{}({P})))$, as noted in the proof of
Lemma \ref{lm:lift_of_kernel_general_connections},  
$$\b_{{{\D}}}^*(\chi_{}({P}))=\chi_{}\left(\frac{a_1(z,xR\hat{Z},\eta,\hat{Z})}{x(a_2(z,xR\hat{Z},\eta,\hat{Z}))^{1/2}}\right),$$  
where $a_j(z,u,\eta,v)$ is $C^1$ in $(z,u,\eta)$ and $C^\infty$ in $v$.
Thus in $\calU_J^{\pm}$, for $x, R>0$ and $j\neq J$,
\begin{align}
	\p_{{\th}^j}(\b_{{{\D}}}^*(\chi_{}({P})))=
	&\chi'(P)\Big(R\,\p_u\big(a_1/a_2^{1/2}\big)(z,xR\hat{Z},\eta,\hat{Z})\cdot \p_{{\th}^j}\hat{Z}\\*
	&\qquad\qquad+x^{-1}\p_{v}(a_1/a_2^{1/2})(z,xR\hat{Z},\eta,\hat{Z})\cdot\p_{{\th}^j}\hat{Z}\Big).
\end{align}
Now use Taylor's Theorem for the function $R\mapsto
\p_{v}\big(a_1/a_2^{1/2}\big)(z,xR\hat{Z},\eta,\hat{Z})\cdot\p_{{\th}^j}\hat{Z}$
(which is $C^1$ in the support of $\chi'(P)$) for 
$x>0$, and the fact that $\p_{v^j}\big(a_1/a_2^{1/2}\big)\big|_{u=0}=\dfrac{\d_j^0(\d_{\a\b}v^\a
  v^\b)-v^0 \d_{j\a}v^\a}{(\d_{\a\b}v^\a v^\b)^{3/2}}$ %
 to find that
 \begin{equation}
   \begin{aligned}
  &\p_{{\th}^j}(\b_{{{\D}}}^*(\chi_{}({P})))
  =\chi'(P)\bigg(R\p_u\big(a_1/a_2^{1/2}\big)(z,xR\hat{Z},\eta,\hat{Z})\cdot {\color{black}\p_{\th^j}\hat{Z}}\\
  &\quad+x^{-1}\frac{\d_m^0(\d_{\a\b}v^\a v^\b)-v^0
          \d_{m\a}v^\a}{(\d_{\a\b}v^\a
          v^\b)^{3/2}}\Big|_{v=\hat{Z}}\p_{{\th}^j}\hat{Z}^m+R\,
        b_{k\ell}(z,xR\hat{Z},\eta,\hat{Z})\hat{Z}^k\p_{{\th}^j}\hat{Z}^\ell\bigg);\label{eq:near_diagonal}  
\end{aligned} 
 \end{equation}
here $b_{k\ell}(z,u,\eta,v)$ is $C^0$ in $(z,u,\eta)$ and $C^\infty$ in $v$.
Note that on $\calU_J^\pm$ and for $j\neq J$
\begin{equation}\label{eq:cases}
	\p_{{\th}^j}\hat{Z}^m=\begin{cases}
	x^{\d_{0m}}\d_j^m, & m\neq J\\
-x^{\d_{0m}}{\th}^j/{\th}^m, & m=J
	\end{cases},
\end{equation}
so in particular $\p_{{\th}^j}\hat{Z}^m$ is smooth on $\calU_J^\pm$.
Therefore, evaluating at $v=\hat{Z}$ in \eqref{eq:near_diagonal}
we obtain
\begin{equation}
	\p_{{\th}^j}(\b_{{{\D}}}^*(\chi_{}({P})))=\chi'(P)\frac{(\d_{\a\b}\hat{Y}^\a
          \hat{Y}^\b)\p_{{\th}^j}\hat{X}-\hat{X}
          \d_{m\a}\hat{Z}^\a\p_{{\th}^j}\hat{Z}^m}{(\d_{{\a}{\b}}\hat{Y}^\a\hat{Y}^\b)^{3/2}}
	+R\chi'(P)\Lambda_1,
	\label{eq:second_factor}
\end{equation}
where $\Lambda_1\in C^0(\calU_J^{\pm})$ in the support of $\chi'(P)$ (as in the proof of Lemma
\ref{lm:lift_of_kernel_general_connections}) and bounded as $R\to\infty$.

We similarly  compute that 
\begin{align} %
	\p_{{\th}^j}\big({G}_{k
          \ell}(z,z+xR\hat{Z},\eta)\hat{Z}^k\hat{Z}^\ell\big)^{-\frac{n}{2}}=-\frac{n}{2}|\hat{Z}|^{-n-2}\big(2\d_{k\ell}\hat{Z}^k\p_{{\th}^j}\hat{Z}^\ell\big)+R\Lambda_2  
          \label{eq:third_factor}
\end{align}
with $\Lambda_2\in C^0(\calU_J^{\pm})$ and bounded as $R\to \infty$ in the support of $\chi(P)$.

Now apply $\p_{\th^j}$ to $K_\n=\Lambda_0 \chi(P) (G_{ij}\hat{Z}^i\hat{Z}^j)^{-n/2}$ and use the product rule.
Using \eqref{eq:first_factor}, \eqref{eq:second_factor} and
\eqref{eq:third_factor}, together with \eqref{eq:smooth_terms} and the
remarks following \eqref{eq:kernel_r=0} to deal with the non-differentiated
factors, we obtain \eqref{eq:lemma_kernel_expression} in $\calU_J^{\pm}$
for $x_3W=\p_{\th^j}$. 
Again by the proof of Proposition 3.3 in \cite{Uhlmann2016}, $W_2K_\n$
decays exponentially fast or vanishes identically as $R\to \infty$  on
$\calU^\pm_J$, uniformly in $\eta$, and we are done. 
\end{proof}

We have shown the regularity results we need for the kernel of $\td{A}_{\chi,\eta,\s}$,
under hypotheses which apply for both $\n=\hn,\o{\n}$. 
We now analyze the lift of the kernel $\k_{\td{E}_{\eta,\s}}$ of
$\td{E}_{\eta,\s}$ (viewed as a section of the smooth density bundle on
$\oX^2$, as usual):

\begin{lemma}\label{lm:kernel_difference}
Let $W$ and $x_3$  be as in the statements of Lemmas
\ref{lm:lift_of_kernel_general_connections} and
\ref{lm:derivative_of_lift}. 
Then for any sufficiently small neighborhood $O$ of $p$ in $\oX$ there
exists $\eta_0>0$ such that upon writing  $
\b^*_{{\D\displaystyle}}(\k_{\td{E}_{\eta,\s}})=K_E\cdot\nu$ one has
$x_3^{-1} 
 K_E,$ $W K_E\in C^0\big (O_{\D\displaystyle}^2\times [0,\eta_0)\big)$ and they both vanish
  to infinite order on $\calG_{\bd}\times [0,\eta_0)$. 
Moreover, both $W K_E$ and $x_3^{-1}
 K_E$ vanish identically for $\eta=0$.
\end{lemma}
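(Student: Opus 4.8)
The plan is to read everything off from Lemmas~\ref{lm:lift_of_kernel_general_connections} and \ref{lm:derivative_of_lift}, applied to $\n=\on$ and $\n=\hn$ separately (recall those lemmas apply to both connections), the one substantive point being that the leading parts of the two lifted kernels at $\calG_3$ coincide, so that their difference gains a full power of the defining function $x_3$. First I would fix a smooth nonvanishing section $\nu$ of $\Omega(\oX^2_{\D})$, a neighborhood $O$ of $p$, and take $\eta_0$ smaller than each of the four constants produced by those two lemmas for $\on$ and $\hn$. Then $\b_{\D}^*(\k_{\td{\oA}_{\chi_0,\eta,\s}})=K_{\on}\cdot\nu$ and $\b_{\D}^*(\k_{\td{\hA}_{\chi_0,\eta,\s}})=K_{\hn}\cdot\nu$, and since $\b_{\D}^*$ is linear and $\k_{\td{E}_{\eta,\s}}=\k_{\td{\oA}_{\chi_0,\eta,\s}}-\k_{\td{\hA}_{\chi_0,\eta,\s}}$ we get $K_E=K_{\on}-K_{\hn}$; all assertions now reduce to facts about this difference.

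Next I would treat $x_3^{-1}K_E$. Away from $\calG_3$ the function $x_3$ is continuous and locally bounded below while $K_E$ is continuous, so $x_3^{-1}K_E$ is continuous there; on a neighborhood of $\calG_{\bd}^\cup$, where $x_3$ is bounded away from $0$, the infinite-order vanishing of $K_{\on}$ and $K_{\hn}$ on $\calG_{\bd}\times[0,\eta_0)$ passes to $K_E$ and hence to $x_3^{-1}K_E$. The only delicate region is a neighborhood of $\calG_3$ with $\calG_{\bd}$ removed: there \eqref{eq:kernel_r=0} expresses $K_\n=2^{-n+1}\chi_0(\hat{X}/|\hat{Y}|)|\hat{Z}|^{-n}+R\,\Lambda_\n$, with $\Lambda_\n$ continuous jointly in all its arguments up to $\{x=0\}$ and the leading term independent of $\n$, so $R^{-1}K_E=\Lambda_{\on}-\Lambda_{\hn}$ extends continuously up to $\calG_3$; since $x_3$ and $R$ are both defining functions for $\calG_3$ and smooth there, $x_3/R$ is smooth and nonvanishing, so $x_3^{-1}K_E$ extends continuously as well. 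Continuity being local, these three cases cover every point of $O^2_{\D}\times[0,\eta_0)$, giving $x_3^{-1}K_E\in C^0(O^2_{\D}\times[0,\eta_0))$ with the asserted infinite-order vanishing on $\calG_{\bd}\times[0,\eta_0)$.

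The argument for $WK_E$ is the exact analogue, with Lemma~\ref{lm:derivative_of_lift} in place of Lemma~\ref{lm:lift_of_kernel_general_connections}: $x_3WK_{\on}$ and $x_3WK_{\hn}$ are continuous and vanish to infinite order on $\calG_{\bd}\times[0,\eta_0)$, hence so does $x_3WK_E$, whence $WK_E$ is continuous and vanishes to infinite order there (dividing by $x_3$, which is bounded below near $\calG_{\bd}$) and is continuous away from $\calG_3$. Near $\calG_3$, \eqref{eq:lemma_kernel_expression} gives $x_3WK_\n=\k_W(q,\eta)+x_3\,\k_{\n,W}(q,x_3,\eta)$ with $\k_W$ independent of $\n$, so $WK_E=\k_{\on,W}-\k_{\hn,W}$ extends continuously up to $\calG_3$. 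Patching as before yields $WK_E\in C^0(O^2_{\D}\times[0,\eta_0))$ with the asserted vanishing.

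Finally, the statement for $\eta=0$ is formal: $\psi_0$ is the identity, so $\td{E}_{0,\s}=E_{0,\s}$, and $E_{0,\s}=0$ because $\on$ and $\hn$ agree outside $M_e$ (as recorded before Proposition~\ref{prop:mapping_diff}); hence $\k_{\td{E}_{0,\s}}=0$, i.e. $K_E(\cdot,0)\equiv 0$ on $O^2_{\D}$, and the already-continuous functions $WK_E$ and $x_3^{-1}K_E$ therefore vanish identically on $\{\eta=0\}$ --- off $\calG_3$ directly, and on $\calG_3$ by continuity. The whole difficulty of this lemma is packaged into Lemmas~\ref{lm:lift_of_kernel_general_connections} and \ref{lm:derivative_of_lift}; the only point requiring care here is verifying that the $\n$-independence of the leading terms at $\calG_3$ asserted there genuinely buys one power of $x_3$, so that $K_E$ and $x_3WK_E$ may be divided by $x_3$ without losing continuity --- which is exactly what the decompositions \eqref{eq:kernel_r=0} and \eqref{eq:lemma_kernel_expression} deliver.
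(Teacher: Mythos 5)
Your proposal is correct and follows essentially the same route as the paper's proof: apply Lemmas~\ref{lm:lift_of_kernel_general_connections} and \ref{lm:derivative_of_lift} to both $\on$ and $\hn$, observe that the $\n$-independent leading terms at $\calG_3$ in \eqref{eq:kernel_r=0} and \eqref{eq:lemma_kernel_expression} cancel in the difference $K_E=K_{\on}-K_{\hn}$ (yielding the extra factor of $x_3$), and deduce the vanishing at $\eta=0$ from the fact that $\on=\hn$ outside $M_e$ so that $\td E_{0,\s}=E_{0,\s}=0$. Your explicit patching over the three regions is just a more detailed write-up of the same argument.
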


\begin{proof}
First observe that Lemmas \ref{lm:cnvx_bdry} and \ref{lm:regularity} imply
that for
$\s>0$ and $\chi_0$ fixed in Proposition \ref{prop:pdo}, Lemmas
\ref{lm:kernel_downstairs}, \ref{lm:lift_of_kernel_general_connections} and 
\ref{lm:derivative_of_lift} apply to both $\on$ and $\hn$, provided
$\eta_0$ and $O$ are sufficiently small: 
note that one needs $O$ to be small enough that if $\supp(\chi_0)\subset
    [-M,M]$ then $M x\leq \min\{\td{C}_{\hn},\td{C}_{\o{\n}}\}\sqrt{x}$ in
    $O$, where $\td{C}_{\hn}$, $\td{C}_{\o{\n}}$ are the constants of Lemma 
    \ref{lm:geod_cond} corresponding to the two connections. 
Now we observe that
$W K_E$ and $x_3^{-1}
 K_E\in C^0\big (O_{\D\displaystyle}^2\times [0,\eta_0)\big)$ and both
   vanish to infinite order on $\calG_{\bd}\times [0,\eta_0)$.
To see this note that in both \eqref{eq:kernel_r=0} and
\eqref{eq:lemma_kernel_expression} the leading order coefficient at
$\calG_3\times [0,\eta_0)$ does not depend on the  connection and hence
cancels 
upon taking the difference $K_{\hn}-K_{\on}$ (as long as $K_{\hn}, $ $K_{\on}$ are computed using the same density $\nu$). 
Finally, if $\eta=0$ we have $\td{E}_{0,\s}={E}_{0,\s}$, acting on
functions supported in a subset of $\oX=\oX_0\subset M_e^c$. 
Since $\on=\hn$ on $M_e^c$ and by the construction of
$\widehat{A}_{\chi_0,0,\s}$ 
(resp. $\o{A}_{\chi_0,0,\s}$), $K_{\hn}(\cdot, 0)$
(resp. $K_{\on}(\cdot,0)$) only depends on the connection $\hn$
(resp. $\on$) on $\oX\subset M_e^c$, 
 we have $\widehat{A}_{\chi_0,0,\s}=\o{A}_{\chi_0,0,\s}$ and thus $E_{0,\s}=\td{E}_{0,\s}=0$ and $W K_E(\cdot,0),$ $\,x_3^{-1}
 K_E(\cdot,0)\equiv0$.
\end{proof}

We finally have:
  
\begin{proof}[Proof of Proposition \ref{prop:mapping_diff_primed}.]

Recall that we now write $\oX$ for $\oX_0$.   
Let ${O}'\subset \oX$ be  a small open neighborhood of $p\in \oM_e$ in
$\oX$ where the results of this section hold and $O$ a neighborhood of $p$
in $\oX$ with $ O\subset K\subset O'$, where $K$ is compact.  
For sufficiently small $\eta\geq 0$ we have $\tU_\eta=\psi_\eta(U_\eta) \subset O$.
Fix $\d>0$. We will show that there exists an $\eta_0$ such that if $0\leq \eta< \eta_0$ then for  $u , v\in 
L^2 (O)$ with $\supp v\subset \tU_\eta$, and
$\td{W}_{\! j}\in\{x^2\p_x,x\p_{y^1},\dots,x\p_{y^n}\}\subset \calV_{\Sc}(\o{X})$ one has  
\begin{equation}\label{eq:small_delta_estimate}
 		|(u, \td{W}_j^k\,\td{E}_{\eta,\s}v)|
 		\leq \d\|u\|_{L^2(O)}\|v\|_{L^2 (\td{U}_\eta)}
                ,\quad j=0,\dots, n, \quad k=0,1. %
 \end{equation}
This will imply the claim since $\td{W}_j$ span $\calV_{\Sc}(\oX)$ on $O'$.
Let $\pi_{L;{\D\displaystyle}}=\pi_L\circ \b_{{\D\displaystyle}}$,
$\pi_{R;{\D\displaystyle}}=\pi_R\circ \b_{{\D\displaystyle}}$, where
$\pi_L$, $\pi_R$ denote 
projection onto the left and right factor of $\o{X}^2$
respectively.
By the Cauchy-Schwartz inequality and using the notations of Lemma \ref{lm:kernel_difference},
\begin{align}
\big|\int_{O^2}(u\otimes v)\k_{\td{E}_{\eta,\s}}\big|^2\leq&  
\Big(\int_{O_{{\D}}^2}\big|(\pi_{L;{\D}}^* u) K_E(\cdot,\eta)(\pi_{R;{\D}}^* v)\big|\nu\Big)^2\\
   \leq & \int_{O_{{\D}}^2}|(\pi_{L;{\D}}^* u)|^2 |K_E(\cdot,\eta)|\nu\cdot
  \int_{ O_{{\D}}^2}|K_E(\cdot,\eta)|\: |(\pi_{R;{\D}}^* v)|^2\nu.\quad\label{eq:cauchy_estimate}
 \end{align}

Recall that the ``coordinates'' \eqref{eq:polar_coordinates_near_front_face}
and the analogous ones given
by \begin{equation}\label{eq:coordinates_near_front_face_from_right} 
  \Big(\td{x},\td{y},\td{R}=\sqrt{\td{X}^2+|\td{Y}|^2},{\td{\th}}=(\td{X},\td{Y})/\td{R}\Big),\text{ where
  }\td{X}=\frac{x-\td{x}}{\td{x}\,^2}, 
  \td{Y}=\frac{y-\td{y}}{\td{x}} 
\end{equation}
identify $O_{{\D}}^2\setminus\calG_{\bd}^\cup$ with a subset of $\o{\R^{n+1}_+}\times [0,\infty)\times \S^{n}$.
By interchanging the roles of $(x,y)$ and $(\td{x},\td{y})$, Lemma
\ref{lm:smooth_density} yields the existence of a non-vanishing $\td{\a}\in
C^\infty (\oX_{{\D}}^2)$ such that in terms of
\eqref{eq:coordinates_near_front_face_from_right} one has
$\nu=\td{\a} (\td{R}+1)^{-1}(2+\td{x}\td{R}\td{{\th}}_0)^{n}|d\td{x}\,
d\td{y}\, d\td{R}\,d\td{{\omega}}|$ ($d\td{\omega}$ is the volume form with respect to the round metric). 
Thus
\begin{align}
  \int_{ O_{{\D}}^2}|(\pi_{L;{\D}}^* &u)|^2
  |K_E|\nu
  =\int
  |u(x,y)|^2
  |K_{E;L}(x,y,R,{\th},\eta)| \frac{(2+xR{\th}_0)^{n}}{1+R}\:|dx\:dy\:dR\:d{\omega}|,\qquad 
  \label{eq:left_integral}
\end{align}
and similarly
\begin{align}
  \int_{ O_{{\D}}^2}|K_{E}|& |(\pi_{R;\D}^* v)|^2\nu
  =\int
  |K_{E;R}(\td{x},\td{y},\td{R},{\td{{\th}}},\eta)|\:|v(\td{x},\td{y})|^2\frac{(2+\td{x}\td{R}\td{{\th}}_0)^{n}}{1+\td{R}}  
  |d\td{x}\:d\td{y}\:d\td{R}\:d{\td{{\omega}}}|,\qquad\label{eq:right_integral} 
\end{align}
where  $K_{E;L}$, $K_{E;R}$ express $K_E$ in terms of
\eqref{eq:polar_coordinates_near_front_face} and
\eqref{eq:coordinates_near_front_face_from_right} respectively.
The integrations on the right hand sides of \eqref{eq:left_integral} and
\eqref{eq:right_integral} are over the appropriate subsets of
$\o{\R^{n+1}_+}\times [0,\infty)\times \S^{n}$ corresponding to $O_{\D}^2$
  (the 
function $\ta$ and the corresponding function $\alpha$ have been absorbed
into $K_{E;R}$, $K_{E;L}$).  
Extend $K_{E;L}$ and $K_{E;R}$ to $\o{\R^{n+1}_+}\times [0,\infty)\times
  \S^{n}\times [0,\eta_0)$ by multiplication by a cutoff function in
$C^{\infty}_c(O'^2_{\D})$ which is $1$ in a neighborhood of $O_{\D}^2$.

For large $R_0$ we have 
\begin{align}
	\int|u(x,&y)|^2 |K_{E;L}(x,y,R,{\th},\eta)|\frac{(2+xR\th_0)^{n}}{1+R}|dx\:dy\:dR\:d{\omega}|\\*
	&\leq \|u\|^2_{L^2(O)}\sup_{(x,y)\in
          O}\int_{\S^n}\int_0^\infty|K_{E;L}(x,y,R,{\th},\eta)|\frac{(2+xR\th_0)^{n}}{1+R}|dR\:
        d{\omega}|\\ 
	&= \|u\|^2_{L^2(O)}\sup_{(x,y)\in
          O}\biggl(\int_{\S^n}\int_0^{R_0}|K_{E;L}(x,y,R,{\th},\eta)|\frac{(2+xR\th_0)^{n}}{1+R}|dR\:
        d{\omega}|\\* 
	&\quad +\int_{\S^n}\int_{R_0}^\infty
        (1+R)^{-2}\Big\{(1+R)|K_{E;L}(x,y,R,{\th},\eta)|{(2+xR\th_0)^{n}}\Big\}|dR\:
        d{\omega}|\biggr)\\ 
	&=\|u\|^2_{L^2(O)}\sup_{(x,y)\in O}\left(\I(x,y,\eta)+\II(x,y,\eta)\right).
\end{align}
By \eqref{eq:defining_functions}, ${(2+xR{\th}_0)^{n}}$ and $({1+R})$  are
of the {form} $x_{10}^{-n}$ and $x_{11}^{-1} x_{10}^{-2}$ 
respectively.
Since by Lemma \ref{lm:kernel_difference} $K_E$ vanishes to infinite order at
$\calG_{\bd}\times [0,\eta_0)$, there exists a constant $C$
such that for all $(x,y)\in O$ and all $0\leq\eta\leq
\eta_0$ $$(1+R)|K_{E;L}(x,y,R,{\th},\eta)|{(2+xR\theta_0)^{n}}\leq
C.$$ 
Therefore, for given $\d>0$, $R_0$  can be chosen sufficiently large that
$\II(x,y,\eta)\leq \d/2$ for $0\leq \eta\leq \eta_0$. 
On the other hand, $\I(x,y,\eta)$ is continuous (it is an integral over a
compact set of a function continuous jointly in $(x,y,R,{\th},\eta)$) and
it vanishes identically for $(x,y,\eta)\in
O\times\{0\}\subset K\times \{0\}$  by Lemma
\ref{lm:kernel_difference}. 
Thus there exists $\eta_0$ such that for $0\leq\eta\leq\eta_0 $ we have
$\sup_{(x,y)\in O}\I(x,y,\eta)\leq  \d/2$ and \eqref{eq:left_integral} is bounded above by $\d \|u\|^2_{L^2(O)}$.

Now \eqref{eq:right_integral} can be analyzed in exactly the same way as
\eqref{eq:left_integral}; the only difference is that now
${(2+\td{x}\td{R}\td{{\th}}_0)^{n}}$ and $({1+\td{R}})$ are of the form
$x_{01}^{-n}$ and $x_{11}^{-1} x_{01}^{-2}$.  
This however does not change the arguments since $ K_E$ vanishes to
infinite order at $\calG_{\bd}$, uniformly for small $\eta$. 
We conclude that  \eqref{eq:small_delta_estimate} holds for $k=0$.

To show \eqref{eq:small_delta_estimate} for $k=1$, we observe 
that
  $\b_{{\D}}^*(|dzd\td{z}|)=h\nu,$ where $h=x_{11}x_2^{n+2}x_{3}^n$
 (as before, $x_*$ stands for a boundary defining function of $\calG_*$
that is smooth and non-vanishing up to the other faces). 
By the analysis at the end of Section \ref{sec:scattering_calculus} it
follows that for $j=0,\dots,n$ the vector field $x_3 W_j$, where $W_j$ is
the lift of $\td{W}_j$, is smooth on $\oX_{\D}^2$ and tangent to all of its
boundary hypersurfaces. 
Thus $(W_jh)/h \in x_3^{-1} C^\infty(\oX_{{\D}}^2)$.
Writing $\k_{\td{E}_{\eta,\s}}=\td{\k}_E(z,\td{z},\eta)|dzd\td{z}|$ so that 
$\b_{{\D}}^*(\td{\k}_E)h=K_E$ we have, for  $u,$ $v\in L^2(O)$ as before, 
\begin{align}
	\int_{O^2} u(z)(\td{W}_{\!j}\,\,\td{\k}_E(z,\td{z},\eta))v(\td{z})|dzd\td{z}|&=
	\int_{O_{{\D}}^2} (\pi_{L;\D}^*u)\,\b_{{\D}}^* (\td{W}_{\! j}\, \td{\k}_E)(\pi_{R;\D}^*v)\, h\,\nu\\
	& =\int_{O_{{\D}}^2} (\pi_{L;\D}^*u)\,\big({W}_{\! j}\b_{{\D}}^* ( \td{\k}_E)\big )\,h\,(\pi_{R;\D}^*v)\,\nu\\
	& =\int_{O_{{\D}}^2} (\pi_{L;\D}^*u)\,\Big(W_j \,K_E-K_E \frac{W_{\!j}\,h}{h}\Big) (\pi_{R;\D}^*v)\, \nu.
\end{align}
Then \eqref{eq:small_delta_estimate} for $k=1$ follows exactly the same
steps  as for $k=0$ from \eqref{eq:cauchy_estimate} onwards, with $K_E$
replaced by $W_j \,K_E-((W_{\!j}\,h)/h)K_E $: 
by Lemma \ref{lm:kernel_difference}, $W_j \,K_E-((W_{\!j}\,h)/h)K_E \in
C^0\big(O_{\D}^2\times[0,\eta_0)\big)$, it vanishes to infinite order at
$\calG_{\bd}\times [0,\eta_0)$ and is identically 0 for $\eta=0$. 
This finishes the proof of the proposition.  
\end{proof}

\noindent \textbf{Acknowledgments.}
Research of N.E. was partially supported by the National Science Foundation
under Grant No. DMS-1800453 of Gunther Uhlmann. 
The authors would like to thank Hart Smith, Gunther Uhlmann, and Andr\'as
Vasy for helpful discussions.
This paper is based on Chapter 1 of N.E.'s University of Washington PhD
Thesis (\cite{EptaminitakisNikolaos2020Gxto}).  

\bigskip

\noindent\textit{The second author fondly remembers the time he spent in the company of
Vaughan Jones at the 2008 Summer Workshop of the New Zealand Mathematics
Research Institute in Nelson. Vaughan's support and presence were felt
throughout the week, from his perspicacious comments and questions during
the lectures to his enthusiasm for extracurricular beach and water
activities to after hours socializing. He enriched the mathematics and
the lives of those who had the good fortune to be around him.}

\bigskip

\bibliographystyle{abbrvalpha}

\bibliography{mybib.bib}

\end{document}